\documentclass[a4paper]{article}

\usepackage[utf8]{inputenc}
\usepackage{amsmath, amscd}
\usepackage{amsthm}
\usepackage{amssymb}
\usepackage[T1]{fontenc}
\usepackage[french, main=english]{babel}

\usepackage{microtype}

\usepackage{hyphenat} 

\usepackage{tikz}
\usetikzlibrary{arrows.meta, cd, decorations.pathmorphing}

\usepackage{multirow}

\usepackage{verbatim} %

\usepackage{pdfpages} %
\usepackage{rotating}

\usepackage{todonotes}

\usepackage{hyperref} %
\hypersetup{hidelinks}

\numberwithin{equation}{section}

\usepackage{enumerate}

\usepackage{faktor}

\usepackage[pagestyles]{titlesec}
\newpagestyle{main}{
  \sethead{\thesection}{}{\sectiontitle}
  \setfoot{}{\thepage}{}
  \headrule}

\DeclareMathOperator\botimes{\bar{\otimes}}

\newcommand{\Z}{\mathbb{Z}}
\newcommand{\N}{\mathbb{N}}
\newcommand{\F}{\mathbb{F}}
\newcommand{\Q}{\mathbb{Q}}

\newcommand{\M}{\mathcal{M}}
\newcommand{\THH}{\operatorname{THH}}

\newtheorem{proposition}[equation]{Proposition}
\newtheorem{lemma}[equation]{Lemma}

\newtheorem{theorem}[equation]{Theorem}
\newtheorem{definition}[equation]{Definition}
\newtheorem{axiom}[equation]{Axiom}

\addtolength{\jot}{2pt} %

\title{A computation of $\THH_*(ku)$ using a gathered spectral sequence}
\author{Maxime \sc{Chaminadour}}
\date{}

\begin{document}
\maketitle
\pagestyle{main}

\begin{abstract}
In this article, we extend the computation of topological Hochschild homology ($\THH$) of the Adams summand $\ell$ of $p$-local connective complex topological $K$-theory ($ku$) to $ku$ itself. We leverage the relation $u^{p-1} = v_1$, where $u$ is a generator of $ku_*$ and $v_1$ is a generator of $\ell_*$, and we consider the cofiber of the multiplication by $v_1$ in $ku$, denoted $ku/v_1$. We use the morphism between the Bockstein spectral sequences of the multiplication by $v_1$ computing $\THH_*(\ell)$ and $\THH_*(ku)$; we develop a general technique using what we term a \emph{gathered spectral sequence} that allows us to explore the relationship between the Bockstein spectral sequences for the multiplications by $v_1$ and $u$, from which we derive a complete computation of $\THH_*(ku)$. Our method is not only applicable to this specific problem but may also prove useful in other computations.
\end{abstract}

\tableofcontents

\section{Introduction}

\subsection{Overview}

Let $ku$ be the $p$-localized connective cover of the complex topological $K$-theory spectrum $KU$, and $\ell$ its Adams summand. In this article, we lift the computation by Angeltveit, Hill and Lawson in \cite{angeltveit2010topological} of $\THH_*(\ell)$ to a computation of $\THH_*(ku)$. The weak equivalence
\begin{equation*}
  \THH(A;B) \simeq B\wedge_A\THH(A)
\end{equation*}
and a filtration of $B$ yields a filtration of $\THH(A; B)$. If this filtration arises from multiplication by an element of $B$, we thus get a Bockstein spectral sequence computing $\THH_*(A;B)$; the Bockstein spectral sequence computed in \cite{angeltveit2010topological} has the form
\begin{equation*}
  \THH_*(\ell;H\Z_{(p)})\otimes P(v_1) \Rightarrow \THH_*(\ell)
\end{equation*}
with $P(v_1) = \ell_*$ a polynomial algebra.

The same construction can be used over $ku$, with a Bockstein spectral sequence of the form
\begin{equation*}
  \THH_*(ku;H\Z_{(p)})\otimes P(u) \Rightarrow \THH_*(ku).
\end{equation*}
However, these two spectral sequences cannot be compared directly using the map of $\mathbb{S}$-modules $\ell\rightarrow ku$, since the generators of $\THH_*(\ell;H\Z_{(p)})$ that support the differentials in the first spectral sequence map to zero in the second.

To lift the computation to $ku$, we need a more subtle technique. The map of $\mathbb{S}$-modules $\ell \to ku$ sends $v_1$ to $u^{p-1}$, so that there is a map of $\mathbb{S}$-modules between the cofiber of the multiplication by $u^{p-1} = v_1$ in $ku$, denoted $ku/v_1$, and in the corresponding cofiber in $\ell$. We also have a weak equivalence $\ell/v_1 \simeq H\Z_{(p)}$, and the morphism
\begin{equation*}
  \THH_*(\ell;H\Z_{(p)}) \rightarrow \THH_*(ku; ku/v_1)
\end{equation*}
is non-trivial. Moreover, multiplication by $v_1$ in $ku$ yields a third Bockstein spectral sequence, of the form
\begin{equation*}
  \THH_*(ku; ku/v_1) \otimes P(v_1) \Rightarrow \THH_*(ku)
\end{equation*}
which can then be compared \emph{via} the morphism with the Bockstein spectral sequence computing $\THH_*(\ell)$. The next step is to compare both Bockstein spectral sequences computing $\THH_*(ku)$, coming from the multiplication by $u$ and $v_1$. We can represent their underlying exact couples:
\begin{center}
  \begin{tikzcd}[column sep=5pt]
    & & & & \operatorname{T}_*(ku; ku/v_{1}) \ar[dll] & & & & \\
    \dots \ar[rr] & & \operatorname{T}_*(ku) \ar[rr, "u"] & & \operatorname{T}_*(ku) \ar[rr, "u"] \ar[ld] & & \operatorname{T}_*(ku) \ar[rr] \ar[ld] \ar[ull] & & \dots \\
    & & & \operatorname{T}_*(ku;H\Z_{(p)}) \ar[lu] & & \operatorname{T}_*(ku;H\Z_{(p)}) \ar[lu] & & &
  \end{tikzcd}
\end{center}
Here we write $\operatorname{T}$ for $\THH$, and we have chosen $p = 3$, so that $v_1 = u^2$; the top exact couple is for multiplication by $v_1$, the bottom one for multiplication by $u$. The top exact couple is obtained by skipping steps in the filtration providing the bottom exact couple. We call the spectral sequence associated to the top exact couple a \emph{gathered spectral sequence} of the bottom spectral sequence, since the differentials from multiple pages of the bottom sequence are gathered into a single page in the top sequence.

In Section~\ref{chap:topologicalhh}, we provide the necessary background on topological Hochschild homology, in particular the different spectral sequences that we will use in our computations. In Section~\ref{chap:reviewthhkul}, we review some computations related to $\THH(\ell)$ and $\THH(ku)$ that constitute the input of the present work. We provide a correspondence between the differentials in a gathered spectral sequence and the differentials in the base spectral sequence in Section~\ref{chap:truncated}, and use it to compute the Bockstein spectral sequence of the multiplication by $u$ converging to $\THH_*(ku)$ in Section~\ref{chap:thhkucomplet}. The technique developed for this computation is general and could be used in other computations where some power of a multiplicative element is better understood than the element itself.

We note that another computation of $\THH_*(ku)$, using different techniques, was carried out in \cite{lee2022integral}.

\subsection{Notations and conventions}

We will use the following notations to describe various algebras:
\begin{itemize}
\item $P(x)$ is a polynomial algebra over a generator $x$,
\item $P_n(x)$ is a truncated polynomial algebra at height $n$, that is the quotient of $P(x)$ by the relation $x^n=0$,
\item  $\Gamma(x)$ is a divided power algebra, which is generated additively by the divided power of $x$, denoted $\gamma_i x$ for any $i\geq 0$, and with the multiplicative relations:
  \begin{equation*}
    \gamma_i x\cdot \gamma_j x = \binom{i+j}{i} \gamma_{i+j} x,
  \end{equation*}
\item $E(x)$ is an exterior algebra, which means $P_2(x)$.
\end{itemize}
The base ring for these algebras will be determined in most cases by the context in which they appear. When computing homology with coefficients in $\F_p$ or modulo $p$ homotopy, the base ring will be $\F_p$. When computing homology with coefficients in $\Z$, $\Z_{(p)}$ or $\Z_p$ (the integers, the $p$-localized integers or the $p$-completed integers), it will be $\Z$, $\Z_{(p)}$ or $\Z_p$. When computing $\THH$, it will be the base ring for the coefficient spectrum. If we need to specify the base ring, we will denote it by a subscript: $P_\Q(x)$, $E_\Q(x)$, etc.

When writing spectral sequences, we will use tensor products $\otimes$ of these algebras. One of these tensor products will be written $\botimes$, it will separate the algebras generated by classes whose bidegree lies on the $x$-axis -- on the left of $\botimes$ -- and those generated by classes whose bidegree lies on the $y$-axis -- on the right of $\botimes$.

When we write generators in the form $v_0^h\sigma u\mu_N$, we will use the conventions
\begin{equation*}
  v_0^0\sigma u \mu_N = \sigma u\mu_N \quad v_0^1\sigma u \mu_N = v_0\sigma u\mu_N \quad \sigma u \mu_0 = \sigma u.
\end{equation*}
The same convention applies with $\sigma u$ replaced by $\sigma v_1$.

\subsection{Acknowledgements}

The content of this article is part of my PhD thesis, which I am grateful to have done under the supervision of Christian Ausoni at the University Paris 13. I would also like to thank my current institution, the Sino-French Institute of the Renmin University of China, for supporting my work.

\section{Topological Hochschild homology}
\label{chap:topologicalhh}

In this section, we will define topological Hochschild homology and some of the tools, mostly spectral sequences, that we will later use in our computation.

The spectral sequence that appears with the first definition of topological Hochschild homology by Bökstedt in \cite{bokstedtthhzzp} is of the following type:
\begin{equation*}
  HH_*(H_*(R;\F_p)) \Rightarrow H_*(\THH(R);\F_p)
\end{equation*}
where $R$ is a ring spectrum and $HH_*$ is the Hochschild homology. This was used to compute $\THH_*(\Z_p)$ and $\THH_*(\F_p)$ in \cite{bokstedtthhzzp}.

The existence of an algebra structure on $\THH(R)$ allows the construction of various Bockstein spectral sequences associated to the multiplication by some element of the algebra; the exact couple of a Bockstein spectral sequence is obtained from the cofiber sequence of the multiplication by the chosen element. Although multiple Bockstein spectral sequences can be constructed from an algebra, they must all compute the same thing. That fact yields a computation of $\THH_*(\ell)$ in \cite{angeltveit2010topological} by making the Bockstein spectral sequences for multiplication by $p$ and $v_1$ compete.

Brun's spectral sequence computes $\THH$ of a ring $A$ with coefficients in an $A$-algebra $B$ from $\THH$ of $B$ with coefficients in a generalized $\operatorname{Tor}$ group in the sense of \cite{elmendorf1997rings}. In \cite{brun2000topological}, that spectral sequence is introduced to compute $\THH_*(\Z/p^n)$. Modern categories of spectra allow us to express this spectral sequence as an Atiyah-Hirzebruch spectral sequence, as done in \cite{honing2020brun} to compute $V(1)_*\THH(ku)$ and $V(0)_*\THH(K(\F_q);\Z_p)$, where $V(0)$ and $V(1)$ are Smith-Toda complexes. One finds that the input of Brun's spectral sequence is often simpler than its output; moreover, this input can often itself be computed using Brun's spectral sequence, yielding further simplifications.

Let $R$ be a commutative $S$-algebra; we work in the category $\M_R$ of $R$-modules of \cite{elmendorf1997rings}, from which most of our definitions are taken.

\subsection{Simplicial spectra and their realization}

Let $\Delta$ be the simplex category, whose objects are the ordered sets of integers $[n]=\{0,\dots,n\}$ and whose morphisms are the order-preserving maps.

\begin{definition}
  A simplicial $R$-module is a functor $F:\Delta^{op}\rightarrow \M_R$.

  For such a functor, its geometric realization, denoted $|F|$, is the coend
  \begin{equation*}
 \int^\Delta F\wedge (\Delta_\bullet)_+ 
\end{equation*}
that is the coend of the functor $\Delta^{op}\times \Delta\rightarrow \M_R$ that sends $(n,m)$ to $F(n)\wedge (\Delta_m)_+$, where $\Delta_\bullet$ is the topological simplex, viewed as a functor $\Delta\rightarrow Top$.

Similarly, a simplicial based space is a functor $F:\Delta^{op}\rightarrow Top_*$, and its geometric realization $|F|$ is the coend of the functor $F\wedge (\Delta_\bullet)_+$.
\end{definition}

The geometric realization, as a coend, is in fact a coequalizer, and thus commutes with colimits. Other useful properties of the geometric realization are:
\begin{proposition}[X.1.3 of \cite{elmendorf1997rings}]
  \begin{itemize}
  \item For a simplicial based space $X_\bullet$, there is a natural isomorphism
    \begin{equation*}
      \Sigma^\infty |X_\bullet| \cong |\Sigma^\infty X_\bullet|.
    \end{equation*}
  \item For a simplicial based space $X_\bullet$ and a simplicial $R$-module $Y_\bullet$, a simplicial $R$-module $Y_\bullet\wedge X_\bullet$ can be obtained by composing the diagonal $\Delta^{op}\rightarrow \Delta^{op}\times\Delta^{op}$ with the functor $\Delta^{op}\times\Delta^{op}\rightarrow \M_R$ sending $(n,\,m)$ to $Y_n\wedge X_m$, and there is a natural isomorphism
    \begin{equation*}
      |Y_\bullet \wedge X_\bullet| \cong |Y_\bullet| \wedge |X_\bullet| .
    \end{equation*}
  \item For two simplicial spectra $Y_\bullet$ and $Z_\bullet$, again using the diagonal structure, there is a natural isomorphism
    \begin{equation*}
      |Y_\bullet \wedge Z_\bullet| \cong |Y_\bullet| \wedge |Z_\bullet| .
    \end{equation*}  
  \end{itemize}
\end{proposition}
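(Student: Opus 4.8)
The plan is to deduce all three isomorphisms from one structural fact: the geometric realization $|F| = \int^{\Delta} F \wedge (\Delta_\bullet)_+$ is a \emph{colimit} --- a coend, hence a coequalizer of coproducts --- and both $\Sigma^\infty$ and the smash product $-\wedge N$ (for $N$ a fixed based space, spectrum, or $R$-module) are left adjoints, so they preserve all colimits. Naturality in $F$, $X_\bullet$, $Y_\bullet$, $Z_\bullet$ will then be automatic, since every comparison map below is assembled from adjunction units and counits and from the interchange isomorphisms of the symmetric monoidal structure, all of which are natural.

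For the first item I would commute $\Sigma^\infty$ past the defining coend:
\begin{equation*}
  \Sigma^\infty |X_\bullet| \;\cong\; \Sigma^\infty \int^{\Delta} X_n \wedge (\Delta_n)_+ \;\cong\; \int^{\Delta} \Sigma^\infty\bigl(X_n \wedge (\Delta_n)_+\bigr) \;\cong\; \int^{\Delta} (\Sigma^\infty X_n) \wedge (\Delta_n)_+ \;=\; |\Sigma^\infty X_\bullet| ,
\end{equation*}
where the only input beyond cocontinuity of $\Sigma^\infty$ is the natural identification $\Sigma^\infty(X\wedge K)\cong(\Sigma^\infty X)\wedge K$ for based spaces $X$ and $K$, which comes from the definition of the smash product of a spectrum with a based space together with the monoidal behaviour of $\Sigma^\infty$.

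For the second and third items the argument is the same, so let $Y_\bullet$ and $Z_\bullet$ be simplicial spectra (the mixed case of a spectrum and a space being identical, and in any case reducible to this one via the first item, since smashing a spectrum with a based space is defined through $\Sigma^\infty$). Since $-\wedge|Z_\bullet|$ and $Y_n\wedge-$ preserve coends, the Fubini theorem for iterated coends gives
\begin{equation*}
  |Y_\bullet| \wedge |Z_\bullet| \;\cong\; \Bigl(\int^{\Delta} Y_n \wedge (\Delta_n)_+\Bigr) \wedge \Bigl(\int^{\Delta} Z_m \wedge (\Delta_m)_+\Bigr) \;\cong\; \int^{\Delta \times \Delta} Y_n \wedge Z_m \wedge (\Delta_n \times \Delta_m)_+ ,
\end{equation*}
using $(\Delta_n)_+\wedge(\Delta_m)_+\cong(\Delta_n\times\Delta_m)_+$ and the symmetry of the smash product to reorder the factors. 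Since $|Y_\bullet \wedge Z_\bullet| = \int^{\Delta} Y_n \wedge Z_n \wedge (\Delta_n)_+$ is the coend over the diagonal, the entire content of the statement is the comparison between this diagonal coend and the double coend above.

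To settle that comparison I would use cocontinuity once more: both sides are cocontinuous functors of the bisimplicial spectrum $(n,m)\mapsto Y_n\wedge Z_m$, and by co-Yoneda every such object is a colimit of the cells built from the external products $\Delta[p]\boxtimes\Delta[q]$ of the representable simplicial sets. So it suffices to treat that case, where the diagonal side becomes $\Sigma^\infty|\Delta[p]\times\Delta[q]|$ (realization of the product simplicial set) and the double coend becomes $\Sigma^\infty\bigl(|\Delta[p]|\times|\Delta[q]|\bigr)$. The required isomorphism is then exactly the classical homeomorphism $|\Delta[p]\times\Delta[q]|\cong|\Delta[p]|\times|\Delta[q]|=\Delta_p\times\Delta_q$, i.e.\ the statement that geometric realization of simplicial sets preserves finite products. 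I expect this point-set fact to be the only non-formal step, and hence the main obstacle: it is where one genuinely uses that we work in a convenient (compactly generated) category of spaces, so that the product of the locally compact simplices $\Delta_p$ and $\Delta_q$ interacts correctly with the coequalizer defining the realization. Everything else is bookkeeping with adjoints, coends, and the symmetric monoidal structure.
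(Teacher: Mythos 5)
Your argument is correct and is essentially the proof of the cited source: the paper itself gives no proof of this proposition (it is quoted from X.1.3 of \cite{elmendorf1997rings}), and the standard argument there is exactly yours, namely that realization is a coend and $\Sigma^\infty$ and the smash products are cocontinuous, so everything reduces by Fubini and co-Yoneda to the classical fact that geometric realization preserves products (of simplices, in compactly generated spaces). The only cosmetic slip is that in the cell reduction the generating bisimplicial objects are $E\wedge(\Delta[p]\boxtimes\Delta[q])_+$ for an arbitrary spectrum $E$, not just suspension spectra, but since the comparison on such a cell is $E$ smashed with the space-level homeomorphism this changes nothing.
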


A useful example of a simplicial $R$-module is given by the bar construction: 
\begin{definition}[IV.7.2 of \cite{elmendorf1997rings}]
  For an $S$-algebra $R$ with multiplication $\phi$ and unit $\eta$, a right $R$-module $M$ with action $\mu$ and a left $R$-module $N$ with action $\nu$, the bar construction of $(M,R,N)$ is the simplicial $S$-module $B_\bullet(M,R,N)$ whose $n$-th simplicial level is
  \begin{equation*}
 B_n(M,R,N) = M\wedge R^{\wedge n} \wedge N 
\end{equation*}
whose $i$-th face map is
\begin{equation*}
  d_i =
  \begin{cases}
    \mu \wedge id_R^{\wedge n - 1} \wedge id_N & \text{ if} i = 0 \\
    id_M \wedge id_R^{\wedge i -1} \wedge \phi \wedge id_R^{\wedge n - i - 1} \wedge id_N & \text{ if} 0 < i < n \\
    id_M \wedge id_R^{\wedge n - 1} \wedge \nu & \text{ if} i = n
  \end{cases}
\end{equation*}
and whose $i$-th degeneracy map is $s_i = id_M \wedge id_R^{\wedge i} \wedge \eta \wedge id_R^{\wedge n - i} \wedge id_N$.

  Denote by $B(M,R,N)$ the realization $|B_\bullet(M,R,N)|$.
\end{definition}

\begin{proposition}[IV.7.5 of \cite{elmendorf1997rings}]
  For $M$ a cell $R$-module and $N$ any $R$-module, there is a natural weak equivalence
  \begin{equation*}
 B(M,R,N) \simeq M\wedge_R N .
\end{equation*}
\end{proposition}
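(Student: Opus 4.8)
The plan is to produce the standard augmentation from the bar construction to the relative smash product and then verify it is a weak equivalence under the cell hypothesis on $M$. First I would note that the multiplication maps $M\wedge R\to M$, $R\wedge R\to R$ and $R\wedge N\to N$ assemble, by iterated use, into a map of simplicial $R$-modules from $B_\bullet(M,R,N)$ to the constant simplicial object on $M\wedge_R N$; this is compatible with all face and degeneracy maps. Since the geometric realization of a constant simplicial spectrum is the spectrum itself, passing to realizations yields a natural map $\varepsilon\colon B(M,R,N)\to M\wedge_R N$, and the whole statement reduces to showing $\varepsilon$ is a weak equivalence.

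The key reduction I would use is to strip off $M$. Using the unit isomorphism $M\wedge_R R\cong M$ together with associativity of $\wedge$, there is a levelwise isomorphism of simplicial $R$-modules
\begin{equation*}
  B_\bullet(M,R,N)\;\cong\; cM\wedge_R B_\bullet(R,R,N),
\end{equation*}
where the inner bar construction $B_\bullet(R,R,N)$ carries the $R$-action on its leftmost factor, and under this isomorphism $\varepsilon$ becomes $\mathrm{id}_M\wedge_R(-)$ applied to the augmentation $B_\bullet(R,R,N)\to cN$. Now $\wedge_R$ is a coequalizer of functors built from $\wedge$, and geometric realization commutes both with colimits (being a coend, as already remarked) and with $\wedge$ by the proposition on realizations of smash products above; hence realization commutes with $cM\wedge_R(-)$, giving a natural isomorphism $B(M,R,N)\cong M\wedge_R B(R,R,N)$ under which $\varepsilon=\mathrm{id}_M\wedge_R\alpha$, where $\alpha\colon B(R,R,N)\to N$ is the realized augmentation. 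Next I would contract the inner bar construction: the augmented simplicial $R$-module $B_\bullet(R,R,N)\to cN$ admits extra degeneracies $s_{-1}$ given by inserting a unit $S\to R$, so it is a contractible augmented simplicial object; realizing, $\alpha$ becomes a homotopy equivalence of underlying spectra, and since it realizes an $R$-linear augmentation it is a weak equivalence of $R$-modules.

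To finish I would invoke that $M\wedge_R(-)$ preserves weak equivalences of $R$-modules whenever $M$ is a cell $R$-module, proved by induction on the cells of $M$: the base case is $(R\wedge X)\wedge_R(-)\cong X\wedge(-)$ for a cell $S$-module $X$, and the inductive step uses that cell attachments are homotopy pushouts preserved by both $M\wedge_R(-)$ and weak equivalences (the gluing lemma), together with commutation with the filtered colimit over skeleta. Applying this to $\alpha$ and combining with the previous paragraph shows $\varepsilon\colon B(M,R,N)\to M\wedge_R N$ is a weak equivalence, and naturality in $M$ and $N$ is immediate from the constructions. I expect the main obstacle to be the point-set bookkeeping in the reduction step — that $\wedge_R$ commutes with geometric realization in the required sense, and that $\wedge_R$ by a cell module is homotopically well behaved — which is precisely where the cell hypothesis on $M$ is genuinely used; the extra-degeneracy contraction of $B_\bullet(R,R,N)$ is purely formal.
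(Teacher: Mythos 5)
Your argument is correct and is essentially the proof of the cited result in \cite{elmendorf1997rings}: the paper itself gives no proof (it quotes IV.7.5 of EKMM), and the standard argument there is exactly your reduction $B(M,R,N)\cong M\wedge_R B(R,R,N)$ via commutation of realization with $\wedge_R$, the extra-degeneracy contraction of $B_\bullet(R,R,N)\to N$, and the fact (III.3.8 of \cite{elmendorf1997rings}, also invoked elsewhere in this paper) that $M\wedge_R(-)$ preserves weak equivalences when $M$ is a cell $R$-module. No gaps beyond the point-set bookkeeping you already flag, which is handled in the cited source.
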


If $R$ is commutative, $A$ is an $R$-algebra and $M$ and $N$ are right and left $A$-modules, one can also form the bar construction $B^R_\bullet(M,A,N)$ by replacing all the smash products by smash products over $R$. In that case:
\begin{proposition}[X.1.2 and XII.1.2 of \cite{elmendorf1997rings}]
  There is a natural weak equivalence $B^R(A,A,N)\simeq N$.
\end{proposition}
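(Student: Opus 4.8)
The plan is to exhibit on the simplicial $R$-module $B^R_\bullet(A,A,N)$, augmented over the constant simplicial $R$-module $N$, an \emph{extra degeneracy} --- equivalently, a simplicial contracting homotopy onto $N$ --- so that on geometric realization the augmentation becomes a \emph{homotopy} equivalence $B^R(A,A,N)\simeq N$, in particular a weak equivalence, with all of the data natural in $A$ and $N$. The reason for routing the argument through an extra degeneracy, rather than through the identification $B(M,R,N)\simeq M\wedge_R N$ recalled above, is that the extra degeneracy uses nothing but the unit of $A$: no cell or cofibrancy hypothesis on $A$ or $N$ is required. That is precisely the additional content of this statement, since feeding $M=A$ into the earlier proposition would at best produce $A\wedge_R N$, not $N$.

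First I would record the augmentation. The left $A$-module structure map of $N$ induces a map $\varepsilon\colon B^R_0(A,A,N)=A\wedge_R N\to N$, and associativity of the $A$-action on $N$ gives $\varepsilon d_0=\varepsilon d_1\colon B^R_1(A,A,N)\to N$, so $\varepsilon$ is an augmentation of $B^R_\bullet(A,A,N)$.

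Next I would define the extra degeneracy $s_{-1}\colon B^R_n(A,A,N)\to B^R_{n+1}(A,A,N)$ for $n\geq -1$, with the convention $B^R_{-1}(A,A,N)=N$, by smashing the unit $R\to A$ into a new leftmost factor: concretely $s_{-1}$ is the map $A\wedge_R A^{\wedge_R n}\wedge_R N\to A\wedge_R A^{\wedge_R(n+1)}\wedge_R N$ induced by $R\to A$, and $s_{-1}\colon N\to A\wedge_R N$ is likewise induced by $R\to A$. Together with the faces and degeneracies already present, these satisfy the extra-degeneracy identities
\begin{gather*}
d_0\, s_{-1}=\mathrm{id},\qquad \varepsilon\, s_{-1}=\mathrm{id}_N,\\
d_i\, s_{-1}=s_{-1}\, d_{i-1},\qquad s_i\, s_{-1}=s_{-1}\, s_{i-1}
\end{gather*}
(the last two in the evident ranges of $i$): the first is the left unit axiom for $A$, the second the unit axiom for the $A$-action on $N$, and the remaining two follow at once from the descriptions of the faces (multiplication across adjacent smash factors, the top face being the $A$-action on $N$) and of the degeneracies (insertion of the unit of $A$). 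These are routine index verifications, so I would not spell them out.

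Finally I would invoke the general principle --- the content of X.1.2 and XII.1.2 of \cite{elmendorf1997rings} --- that geometric realization sends an augmented simplicial $R$-module equipped with an extra degeneracy to an $R$-module homotopy equivalent to its $(-1)$-term, the homotopies being built functorially out of the extra degeneracy and the cosimplicial topological simplex $\Delta_\bullet$. Applied here, $B^R(A,A,N)=|B^R_\bullet(A,A,N)|\to N$ is a homotopy equivalence with explicit inverse $N=B^R_{-1}(A,A,N)\to B^R_0(A,A,N)\to B^R(A,A,N)$; and since $\varepsilon$ together with all the faces, degeneracies and $s_{-1}$ are natural in the $R$-algebra $A$ and the $A$-module $N$, so is the equivalence. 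There is no serious obstacle here: the only points demanding care are aligning the face and degeneracy index conventions so that the identities above hold strictly, and importing the realization lemma in the form just stated; the moral is that only the unit of $A$ enters, which is exactly why no finiteness, flatness or cofibrancy hypothesis is needed.
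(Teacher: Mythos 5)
Your proof is correct: the paper itself offers no argument for this proposition --- it simply cites X.1.2 and XII.1.2 of \cite{elmendorf1997rings} --- and the extra-degeneracy (simplicial contraction) argument you give is exactly the standard proof behind that citation, including the key observation that only the unit of $A$ and the unit axiom for the $A$-action on $N$ are used, so no cellularity hypothesis is needed. (One minor aside: plugging $M=A$ into the earlier bar-construction proposition, with $A$ as the middle term, would formally give $A\wedge_A N\cong N$ rather than $A\wedge_R N$; the genuine obstruction there is the cell-module hypothesis on $M$, as you indicate --- but this occurs only in your motivating remark and does not affect the argument.)
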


In the next section, we define topological Hochschild homology as a simplicial spectrum.

\subsection{Simplicial definition of THH and consequences}

Let $R$ be a cofibrant commutative $S$-algebra, $A$ a cofibrant $R$-algebra, $M$ an $(A,A)$-bimodule. Let
\begin{equation*}
 \phi:A\wedge_R A\rightarrow A \mbox{ and } \eta:R\rightarrow A 
\end{equation*}
be the multiplication and unit of $A$. Let
\begin{equation*}
 \xi_\ell:A\wedge_R M\rightarrow M \mbox{ and } \xi_r:M\wedge_R A\rightarrow M 
\end{equation*}
be the left and right action of $A$ on $M$. Let
\begin{equation*}
 \tau:M\wedge_R A^{\wedge n} \wedge_R A \rightarrow A\wedge_R M \wedge_R A^{\wedge n} 
\end{equation*}
be the map cyclically permuting the factors. In the following all the smash products are over $R$.

\begin{definition}[IX.2.1 of \cite{elmendorf1997rings}]
  The topological Hochschild homology of $A$ with coefficients in $M$ is the realization, denoted $\THH^R(A;M)$, of the simplicial $R$-module $\THH^R(A;M)_\bullet$ whose $n$-th simplicial level is given by
  \begin{equation*}
 \THH^R(A;M)_n = M\wedge_R A^{\wedge n} 
\end{equation*}
  with $i$-th face map given by $\xi_r\wedge id^{n-1}$ if $i=0$, $id\wedge id^{i-1} \wedge \phi \wedge id^{n-i-1}$ if $0<i<n$, $(\xi_\ell \wedge id^{n-1})\circ \tau$ if $i=n$; and with $i$-th degeneracy map given by $id\wedge id^i \wedge \eta \wedge id^{n-1}$.
\end{definition}

When $M = A$, this construction is also called the cyclic bar construction.

When working over $R=S$, we will drop the $^S$ from the notation. When $M=A$, we will write $\THH^R(A)=\THH^R(A;A)$. When $A$ is commutative, topological Hochschild homology has the following structure:
\begin{proposition}[IX.2.2 of \cite{elmendorf1997rings}]
  Let $A$ be a commutative $R$-algebra. Then $\THH^R(A)$ is naturally a commutative $A$-algebra with unit map the inclusion of the 0-th simplicial level $A\rightarrow \THH^R(A)$; $\THH^R(A;M)$ is a $\THH^R(A)$-module.
\end{proposition}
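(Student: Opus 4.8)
The plan is to lift the cyclic bar construction one categorical level: when $A$ is commutative, $\THH^R(A)_\bullet$ is not merely a simplicial $R$-module but a \emph{simplicial commutative $R$-algebra}, and one then concludes by applying the realization, which is symmetric monoidal for $\wedge_R$. In detail, at simplicial level $n$ the spectrum $\THH^R(A)_n = A\wedge_R A^{\wedge n}$ is a smash product over $R$ of $n+1$ copies of the commutative $R$-algebra $A$, hence is itself a commutative $R$-algebra. The crucial verification is that every face and degeneracy map of the cyclic bar construction defined above is a map of commutative $R$-algebras: the inner face maps apply $\phi\colon A\wedge_R A\to A$ to a pair of adjacent smash factors, and $\phi$ is an algebra map precisely because $A$ is commutative; the degeneracies insert a unit into a smash factor, which is an algebra map; and $d_0 = \xi_r\wedge id^{n-1}$ and $d_n = (\xi_\ell\wedge id^{n-1})\circ\tau$ are, in the case $M = A$ relevant here, built from $\phi$, the unit, and the symmetry isomorphism $\tau$ permuting the smash factors cyclically, all of which are algebra maps. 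Thus $\THH^R(A)_\bullet$ is a simplicial object in commutative $R$-algebras, and realizing, using the isomorphism $|Y_\bullet\wedge Z_\bullet|\cong|Y_\bullet|\wedge|Z_\bullet|$ (the $\wedge_R$-version of the proposition above) together with $|cR|\simeq R$ for the constant simplicial object, endows $\THH^R(A) = |\THH^R(A)_\bullet|$ with a natural commutative $R$-algebra structure.

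For the $A$-algebra structure, recall that the realization carries a filtration whose bottom stage is $\THH^R(A)_0 = A$; the inclusion $A\hookrightarrow\THH^R(A)$ of this $0$-skeleton is the restriction of the multiplicative structure just constructed, hence is a map of commutative $R$-algebras, and it is precisely the inclusion of the $0$-th simplicial level. It therefore exhibits $\THH^R(A)$ as a commutative $A$-algebra with this map as its unit.

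For the last statement, let $M$ be an $(A,A)$-bimodule. Then $\THH^R(A;M)_\bullet$, with $\THH^R(A;M)_n = M\wedge_R A^{\wedge n}$, is levelwise a module over the simplicial commutative $R$-algebra $\THH^R(A)_\bullet$: the level-$n$ action $(A\wedge_R A^{\wedge n})\wedge_R(M\wedge_R A^{\wedge n})\to M\wedge_R A^{\wedge n}$ is assembled from $\phi$ and the left and right actions $\xi_\ell,\xi_r$ of $A$ on $M$, the two actions being combined using commutativity of $A$. One checks that this action is a morphism of simplicial $R$-modules, i.e. compatible with the face and degeneracy maps, again using that $\phi$ is an algebra map and that $\xi_\ell,\xi_r$ satisfy the bimodule axioms. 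Realizing this levelwise action and invoking $|\THH^R(A)_\bullet\wedge\THH^R(A;M)_\bullet|\cong\THH^R(A)\wedge\THH^R(A;M)$ produces the desired $\THH^R(A)$-module structure on $\THH^R(A;M)$.

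The substance of the argument lies in the bookkeeping rather than in any hard idea; the step I expect to be the main obstacle is the patient verification that each structure map of the cyclic bar construction is multiplicative — especially the top face map $d_n$, where the cyclic permutation $\tau$ intervenes, and the compatibility of the level-$n$ module action with the twisted face maps $d_0$ and $d_n$ — together with the point-set care needed for the EKMM smash product $\wedge_R$ to be symmetric monoidal with well-behaved realization, which is where the standing cofibrancy hypotheses on $R$ and $A$ are used. All of these are routine once the framework of \cite{elmendorf1997rings} is in place.
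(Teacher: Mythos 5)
The paper gives no argument for this statement---it is quoted directly from IX.2.2 of \cite{elmendorf1997rings}---so the only comparison to make is with the standard proof you are reconstructing. For the algebra half, your reconstruction is that proof and is correct: when $A$ is commutative the coordinatewise product makes the cyclic bar construction a simplicial commutative $R$-algebra (each face and degeneracy, including $d_n$ with its cyclic permutation, is an algebra map), realization inherits the multiplication via $|Y_\bullet\wedge Z_\bullet|\cong|Y_\bullet|\wedge|Z_\bullet|$, and the map of simplicial algebras from the constant simplicial object on $A$ (iterated degeneracies) realizes to the inclusion of the $0$-th level, giving the $A$-algebra unit.

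The gap is in the module half. At level $n$ your action must, in the $M$-coordinate, combine $b_0$ with $m$ through either $\xi_\ell$ or $\xi_r$ (there is only one $A$-factor available to act), and neither choice commutes with all the faces of the diagonal for an arbitrary $(A,A)$-bimodule. With the convention that $b_0$ acts through $\xi_\ell$, compatibility with $d_0$ forces
\begin{equation*}
  \xi_r\bigl(\xi_\ell(b_0,m),\,b_1a_1\bigr)\;=\;\xi_\ell\bigl(b_0b_1,\,\xi_r(m,a_1)\bigr),
\end{equation*}
and precomposing with the unit in the $b_0$- and $a_1$-slots this is exactly $\xi_r\circ\tau=\xi_\ell$, i.e.\ the assertion that $M$ is a \emph{symmetric} bimodule; the bimodule axioms together with commutativity of $A$ do not yield it, and the mirror convention only moves the failure from $d_0$ to the cyclic face $d_n$. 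So ``one checks \ldots using the bimodule axioms'' is precisely the step that does not go through in the generality you state. This does not affect anything the paper needs---the very next proposition assumes $M$ symmetric, and every coefficient spectrum used later ($H\Z_{(p)}$, $ku/v_1$, $KU$, $L$) is a commutative $A$-algebra---but your proof should either impose the symmetry hypothesis (equivalently, take $M$ an $A$-module), or obtain the $\THH^R(A)$-action by a construction other than the levelwise diagonal one, matching the actual hypotheses of IX.2.2 in \cite{elmendorf1997rings}.
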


From the cited properties of the geometric realization with respect to the smash product, and by viewing $M$ as a constant simplicial spectrum, one can see that:
\begin{proposition}
  When $A$ is commutative and $M$ is a symmetric $(A,A)$-bimodule, there is a natural isomorphism of simplicial $R$-modules
  \begin{equation*}
 M \wedge_A \THH^R(A)_\bullet \cong \THH^R(A;M)_\bullet 
\end{equation*}
  and thus a natural isomorphism of $R$-modules
  \begin{equation*}
 M \wedge_A \THH^R(A) \cong \THH^R(A;M) .
\end{equation*}
\end{proposition}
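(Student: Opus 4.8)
The plan is to establish the isomorphism of simplicial $R$-modules levelwise and then deduce the isomorphism of $R$-modules by applying geometric realization. For the levelwise statement, recall that $\THH^R(A)_n = A\wedge_R A^{\wedge n}$ carries the $A$-module structure coming from the unit $A\to\THH^R(A)$, namely multiplication on the zeroth smash factor; one checks first that every face and degeneracy of $\THH^R(A)_\bullet$ is $A$-linear for this structure (the only point is the last face, which rotates the last factor to the front and then multiplies it into the zeroth factor, and is still linear in the zeroth factor because $A$ is commutative), so that $M\wedge_A\THH^R(A)_\bullet$ genuinely is a simplicial $R$-module. Since $A$ is commutative and $M$ is symmetric, the two $A$-actions on $M$ agree and the unit isomorphism $M\wedge_A A\cong M$ is unambiguous; smashing it with $A^{\wedge n}$ over $R$ gives a natural isomorphism of $R$-modules
\[
\lambda_n\colon\ M\wedge_A\THH^R(A)_n = M\wedge_A\bigl(A\wedge_R A^{\wedge n}\bigr)\ \cong\ (M\wedge_A A)\wedge_R A^{\wedge n}\ \cong\ M\wedge_R A^{\wedge n} = \THH^R(A;M)_n,
\]
which on points reads $m\wedge(a_0\wedge a_1\wedge\cdots\wedge a_n)\mapsto (ma_0)\wedge a_1\wedge\cdots\wedge a_n$.

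Next I would check that the $\lambda_n$ intertwine the structure maps of $M\wedge_A\THH^R(A)_\bullet$ with those of $\THH^R(A;M)_\bullet$. The degeneracies insert the unit via $\eta$, the inner faces $d_1,\dots,d_{n-1}$ apply $\phi$ away from the zeroth factor, and $d_0$ applies $\phi$ to the zeroth and first factors; under $\lambda_\bullet$ these become exactly the degeneracies, the inner faces, and the face $\xi_r\wedge\mathrm{id}^{n-1}$ of $\THH^R(A;M)_\bullet$, so these cases are immediate. The one nontrivial comparison --- the step I expect to be the crux --- is the last face $d_n$. On $\THH^R(A;M)_\bullet$ it is $(\xi_\ell\wedge\mathrm{id}^{n-1})\circ\tau$, so, tracing through $\lambda_n$, it sends $m\wedge(a_0\wedge\cdots\wedge a_n)$ to $\bigl(a_n\cdot(ma_0)\bigr)\wedge a_1\wedge\cdots\wedge a_{n-1}$, with $a_n$ acting on $M$ from the left; whereas applying $M\wedge_A d_n$ and then $\lambda_{n-1}$ sends it to $(m\,a_n\,a_0)\wedge a_1\wedge\cdots\wedge a_{n-1}$, with $a_n$ absorbed into $M$ from the right. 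These agree precisely because $M$ is symmetric, so that left and right multiplication by $a_n$ on $M$ coincide (using also the commutativity of $A$); this is exactly, and only, where the symmetry hypothesis enters, and without it $\lambda_\bullet$ would fail to be a simplicial map.

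Finally, to pass to realizations I would use that $M\wedge_A-\colon\M_A\to\M_R$ is a left adjoint (its right adjoint being the function-module functor $F_A(M,-)$), hence commutes with all colimits, in particular with the coend defining the geometric realization, as well as with smashing by the spaces $(\Delta_m)_+$; this is the instance of the compatibility of $|-|$ with $\wedge$ recalled above, applied to $M$ regarded as a constant simplicial object. Hence $|M\wedge_A\THH^R(A)_\bullet|\cong M\wedge_A|\THH^R(A)_\bullet| = M\wedge_A\THH^R(A)$, and composing with the realization of $\lambda_\bullet$ yields $M\wedge_A\THH^R(A)\cong\THH^R(A;M)$. Naturality in $M$ and $A$ is inherited from that of the unit isomorphism $M\wedge_A A\cong M$ and of the realization functor. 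The genuine content of the argument is thus the $d_n$-check; everything else is bookkeeping with the definitions.
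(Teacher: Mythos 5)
Your proof is correct and follows the same route the paper intends: identify $M\wedge_A\THH^R(A)_\bullet$ with $\THH^R(A;M)_\bullet$ levelwise (with the symmetry of $M$ and commutativity of $A$ entering only at the last face), view $M$ as a constant simplicial object, and use the compatibility of geometric realization with smash products/colimits to pass to realizations. Your write-up simply makes explicit the simplicial-identity checks that the paper leaves implicit.
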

A key application is the Smith-Toda complex $V(0)$ (the modulo $p$ sphere), for which $V(0)\wedge H\Z \cong V(0)\wedge H\Z_p \cong H\F_p$, so
\begin{equation*}
  V(0)\wedge\THH(A;H\Z) \cong V(0)\wedge\THH(A;H\Z_p)\cong \THH(A;H\F_p).
\end{equation*}

The simplicial construction of THH can also be linked with the bar construction. For an $R$-algebra $A$, let $A^e= A\wedge_R A^{op}$ be the enveloping algebra of $A$, where $A^{op}$ is the $R$-algebra obtained by composing the multiplication $A\wedge_R A \rightarrow A$ of $A$ with the map permuting the two factors $A\wedge_R A \rightarrow A\wedge_R A$.
\begin{proposition}[IX.2.4 and IX.2.5 of \cite{elmendorf1997rings}]\label{prop:thhandbar}
  There is a natural isomorphism
  \begin{equation*}
 \THH^R(A;M) \cong M\wedge_{A^e} B^R(A,A,A) 
\end{equation*}
  that gives a natural weak equivalence
  \begin{equation*}
 \THH^R(A;M) \simeq M \wedge_{A^e} A 
\end{equation*}
  when $M$ is a cell $A^e$-module.
\end{proposition}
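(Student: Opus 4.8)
The plan is to first establish an isomorphism of simplicial $R$-modules
\begin{equation*}
  M\wedge_{A^e}B^R_\bullet(A,A,A)\;\cong\;\THH^R(A;M)_\bullet
\end{equation*}
and then pass to geometric realizations. Since smashing over $R$ with a fixed module is a left adjoint, and the relative smash product $-\wedge_{A^e}-$ is assembled from smash products over $R$ together with a coequalizer, geometric realization — being a colimit — commutes with $M\wedge_{A^e}-$; combined with the compatibilities of $|-|$ with smash products recalled above, this promotes the simplicial isomorphism to the asserted isomorphism $\THH^R(A;M)\cong M\wedge_{A^e}B^R(A,A,A)$. For the weak equivalence I would invoke the cited equivalence $B^R(A,A,A)\simeq A$, which is one of $(A,A)$-bimodules, i.e.\ of $A^e$-modules; when $M$ is a cell $A^e$-module the functor $M\wedge_{A^e}-$ preserves weak equivalences, so $M\wedge_{A^e}B^R(A,A,A)\simeq M\wedge_{A^e}A$.

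To build the level-wise isomorphism, I would recognize $B^R_n(A,A,A)=A\wedge_R A^{\wedge n}\wedge_R A$ as a \emph{free} $A^e$-module. Here $A^e=A\wedge_R A^{op}$ acts through the two outer smash factors, the leftmost one carrying the left $A$-action and the rightmost one the right $A$-action; permuting the rightmost factor next to the leftmost one, and using that $A^{op}$ and $A$ have the same underlying $R$-module, identifies $B^R_n(A,A,A)$ with the free $A^e$-module $A^e\wedge_R A^{\wedge n}$ on $A^{\wedge n}$. Hence
\begin{equation*}
  M\wedge_{A^e}B^R_n(A,A,A)\;\cong\;M\wedge_{A^e}\bigl(A^e\wedge_R A^{\wedge n}\bigr)\;\cong\;M\wedge_R A^{\wedge n}\;=\;\THH^R(A;M)_n,
\end{equation*}
naturally in $M$, and the first half of the argument reduces to checking that these isomorphisms are compatible with the simplicial structure maps.

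For that last verification: the degeneracies of the bar construction insert a unit among the middle factors $A^{\wedge n}$, matching those of $\THH^R(A;M)_\bullet$, and the inner faces $d_i$ with $0<i<n$ multiply adjacent middle factors, matching $\mathrm{id}\wedge\cdots\wedge\phi\wedge\cdots\wedge\mathrm{id}$. The face $d_0$ multiplies the leftmost outer $A$ into the first middle factor; after smashing over $A^e$ this outer $A$ has been absorbed into $M$ through its right action, so $d_0$ becomes $\xi_r\wedge\mathrm{id}^{n-1}$. The face $d_n$ multiplies the last middle factor into the rightmost outer $A$, which was transported to the front when displaying $B^R_n(A,A,A)$ as $A^e\wedge_R A^{\wedge n}$; consequently, under the identification, this face becomes the left action applied around the cycle, namely $(\xi_\ell\wedge\mathrm{id}^{n-1})\circ\tau$, exactly as in the simplicial definition of $\THH$. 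I expect this final point to be the main obstacle: carefully tracking how the $A^{op}$-factor of $A^e$ together with the permutation used to exhibit $B^R_n(A,A,A)$ as a free $A^e$-module conspire to produce the cyclic permutation $\tau$. Everything else is formal, once realization is known to commute with the colimits in sight.
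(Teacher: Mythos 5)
Your argument is correct and follows essentially the same route as the paper: identify each simplicial level $B^R_n(A,A,A)$ with the free $A^e$-module $A^e\wedge_R A^{\wedge n}$ to get $M\wedge_{A^e}B^R_n(A,A,A)\cong M\wedge_R A^{\wedge n}$, check that the face and degeneracy maps (including the cyclic twist in the last face) agree with those of $\THH^R(A;M)_\bullet$, pass to realizations using their compatibility with colimits and smash products, and deduce the weak equivalence from $B^R(A,A,A)\simeq A$ together with the fact that smashing a cell $A^e$-module over $A^e$ preserves weak equivalences (EKMM III.3.8). Your write-up is simply a more detailed version of the paper's proof, particularly on the bookkeeping of the $A^{op}$-factor and the permutation $\tau$.
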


Thus, we could have defined $\THH^R(A;M)$ as the derived smash product $M\wedge^L_{A^e} A$, which is the second definition proposed in \cite{elmendorf1997rings}.

\subsection{Spectral sequences computing THH}

The original result of Brun was the following:
\begin{theorem}[theorem 6.2.10 of \cite{Bru00}]
  When $R\rightarrow A$ is a ring homomorphism between (discrete) commutative rings, there is a multiplicative spectral sequence:
  \begin{equation*}
 E^2_{n,m} = \THH_n(HA;H\operatorname{Tor}^R_m(A,A)) \Rightarrow \THH(HR;HA) .
\end{equation*}
\end{theorem}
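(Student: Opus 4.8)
The plan is to realise Brun's spectral sequence as the homotopy spectral sequence of a Postnikov tower, after first rewriting $\THH(HR;HA)$ as the topological Hochschild homology of $HA$ with coefficients in an explicit bimodule whose homotopy is the $\operatorname{Tor}$-algebra. First I would reduce the coefficients: by Proposition~\ref{prop:thhandbar} over the sphere, $\THH(HR;HA)\simeq HA\wedge_{HR^{e}}HR$ with $HR^{e}=HR\wedge_S HR$; extending scalars along the ring map $HR^{e}\to HA^{e}:=HA\wedge_S HA$ and applying the standard ``diagonal'' base-change identity $(HA\wedge_S HA)\wedge_{HR^{e}}HR\simeq HA\wedge_{HR}HA$ gives
\begin{equation}
  \THH(HR;HA)\ \simeq\ HA\wedge_{HA^{e}}\bigl(HA\wedge_{HR}HA\bigr)\ \simeq\ \THH^S\bigl(HA;\,HA\wedge_{HR}HA\bigr),
\end{equation}
where the bimodule $HA\wedge_{HR}HA$ carries the evident left-factor/right-factor structure.

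Next I would build the filtration. Since $\pi_*HR=R$ is concentrated in degree $0$, the Künneth (Tor) spectral sequence for $HA\wedge_{HR}HA$ collapses, so $\pi_*(HA\wedge_{HR}HA)\cong\operatorname{Tor}^R_*(A,A)$; I would then take the Postnikov tower of $HA\wedge_{HR}HA$ in $HA^{e}$-modules, whose $m$-th layer is $\Sigma^{m}H\operatorname{Tor}^R_m(A,A)$, and apply the exact functor $\THH^S(HA;-)=(-)\wedge_{HA^{e}}HA$ to it. The resulting tower of spectra has $m$-th layer $\Sigma^{m}\THH(HA;H\operatorname{Tor}^R_m(A,A))$, so its homotopy spectral sequence takes the form
\begin{equation}
  E^{2}_{n,m}=\THH_n\bigl(HA;H\operatorname{Tor}^R_m(A,A)\bigr)\ \Longrightarrow\ \THH_{n+m}(HR;HA).
\end{equation}
Strong convergence is automatic: $\operatorname{Tor}^R_m(A,A)$ sits in homological degree $m\ge 0$ while $\THH_n(HA;-)=0$ for $n<0$, so only finitely many filtration steps meet each total degree.

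For the multiplicative structure I would use that $HA\wedge_{HR}HA$ is a commutative $HA$-algebra with a multiplicative Postnikov tower, together with the commutative $HA$-algebra structure on $\THH(HR;HA)\simeq HA\wedge_{HR}\THH(HR)$ coming from the isomorphism $M\wedge_A\THH^R(A)\simeq\THH^R(A;M)$ for symmetric bimodules $M$, so that the filtration above is by a descending chain of ideals and the spectral sequence inherits products realising the displayed $E^2$ as a ring isomorphism. I expect this last point to be the main obstacle: the bimodule structure on $HA\wedge_{HR}HA$ forced on us in the first step is \emph{not} the symmetric one coming from its commutative-algebra structure, so one has to check carefully that the Postnikov filtration and the functor $\THH^S(HA;-)$ interact multiplicatively. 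This is where the modern reinterpretation of the construction as an Atiyah--Hirzebruch-type spectral sequence associated to the filtered object $HA\wedge_{HR}HA$ (keeping track of its module and algebra structure throughout) is cleanest; reconciling the resulting indexing and edge homomorphisms with Brun's original statement is then routine.
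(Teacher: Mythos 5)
The paper does not prove this statement at all: it is quoted as Brun's theorem from \cite{brun2000topological}, with the remark that modern categories of spectra let one recast it as an Atiyah--Hirzebruch-type spectral sequence as in \cite{honing2020brun}. Your proposal is, in outline, exactly that modern proof specialized to Eilenberg--MacLane spectra (Höning's theorem with $E=S$, $A=HR$, $B=HA$): the base-change identity $\THH(HR;HA)\simeq HA\wedge_{HA^e}(HA\wedge_{HR}HA)\simeq\THH(HA;HA\wedge_{HR}HA)$ obtained from \cref{prop:thhandbar} is the correct first move, the collapse of the K\"unneth spectral sequence identifying $\pi_*(HA\wedge_{HR}HA)$ with $\operatorname{Tor}^R_*(A,A)$ is right because $R$ and $A$ are discrete, and filtering the coefficient bimodule by its Postnikov tower in $HA^e$-modules and applying $(-)\wedge^L_{HA^e}HA$ does produce a tower with layers $\Sigma^m\THH(HA;H\operatorname{Tor}^R_m(A,A))$ and the stated $E^2$-page. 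So the route is sound and is the one the paper points to, just not the one Brun originally used.

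Two points deserve more care than your sketch gives them. First, strong convergence is not purely a counting statement about filtration degrees: you must also know that the tower $\{\THH(HA;\tau_{\le m}(HA\wedge_{HR}HA))\}$ has the right (co)limit, i.e.\ that applying $\THH(HA;-)$ commutes with the Postnikov limit; this follows because $\THH(HA;M)$ is $m$-connective when $M$ is, so the tower is pro-constant in each degree, but it should be said. Second, your worry about the bimodule structure is slightly misplaced: the $HA$-bimodule structure forced by the base change (left factor/right factor) is precisely the $HA^e$-module structure underlying the commutative $HA^e$-algebra $HA\wedge_{HR}HA$, so there is no conflict with multiplicativity; the standard device is to replace the Postnikov tower by the associated connective-cover filtration $F^m$, whose pairings $F^i\wedge_{HA^e}F^j\to F^{i+j}$ exist for connectivity reasons and make the spectral sequence multiplicative, as carried out in \cite{honing2020brun}. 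With those two points filled in, your argument is a complete proof of the statement.
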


That result was generalized by Höning in \cite{honing2020brun}.
\begin{theorem}[theorem 1.1 of \cite{honing2020brun}]
  \label{the:brunsshoning}
  Let $A$ be a cofibrant commutative $S$-algebra and $B$ be a connective cofibrant commutative $A$-algebra. Let $E$ be an $S$-ring spectrum. Then there is a multiplicative spectral sequence of the form
  \begin{equation*}
    E^2_{n,m} = \THH_n(B;HE^S_m(B\wedge_A B)) \Rightarrow E^S_{n+m}(\THH(A;B))
  \end{equation*}
  with differentials
  \begin{equation*}
    d^r_{_n,m}:E^r_{n,m}\rightarrow E^r_{n-r, m+r - 1}.
  \end{equation*}
\end{theorem}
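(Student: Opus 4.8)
The plan is to produce the spectral sequence as the homotopy spectral sequence of a suitable simplicial (equivalently, filtered) $E$-module spectrum: since the prescribed differentials $d^r\colon E^r_{n,m}\to E^r_{n-r,m+r-1}$ are exactly those of the skeletal filtration of a simplicial object, the goal is to realise $E\wedge\THH(A;B)$ as such a realisation and to recognise its $E^2$-page as $\THH_*(B;HE^S_*(B\wedge_A B))$.

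First I would establish a base-change equivalence. Starting from $\THH^S(A;M)\simeq M\wedge^L_{A^e}A$ with $A^e=A\wedge_S A$ (Proposition~\ref{prop:thhandbar}), together with the factorisation $A^e\to B\wedge_A B\to B$ of the canonical map — the first arrow being $\varphi\wedge\varphi$ for the structure map $\varphi\colon A\to B$, the second the multiplication of $B$ — the change-of-rings isomorphism for the two-sided bar construction yields a natural equivalence
\begin{equation*}
  \THH^S(A;B)\;\simeq\;B\wedge^L_{B\wedge_A B}\THH^S(A;B\wedge_A B),
\end{equation*}
where $B\wedge_A B$ carries its canonical (commutative, hence symmetric) $A$-bimodule structure; one checks this agrees with the restriction along $A^e\to B\wedge_A B$ of the action of $B\wedge_A B$ on itself, and that $(B\wedge_A B)\wedge^L_{A^e}A$ is exactly $\THH^S(A;B\wedge_A B)$. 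This is where the object $B\wedge_A B$ — and hence the groups $E^S_*(B\wedge_A B)$ — enter; it is the analogue in the present setting of Brun's original observation.

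Next I would construct the filtration. Smashing with $E$ commutes with relative smash products, so $E\wedge\THH^S(A;B)\simeq(E\wedge B)\wedge^L_{B\wedge_A B}\THH^S(A;B\wedge_A B)$; since $B\wedge_A B$ is a commutative $B$-algebra and both factors are $B$-modules, I would model this derived smash product by the two-sided bar construction carried out in $B$-modules, and combine it with the cyclic bar construction realising $\THH^S(A;B\wedge_A B)$. This exhibits $E\wedge\THH(A;B)$ as the realisation of a bisimplicial spectrum; taking the skeletal filtration in the bar direction, the $E^1$-page is the homotopy of the associated graded, and one would then have to reorganise the two simplicial directions so as to identify the resulting normalised complex with the cyclic bar complex of $B$ with coefficients in a spectrum whose homotopy is $HE^S_*(B\wedge_A B)$, giving $E^2_{n,m}=\THH_n(B;HE^S_m(B\wedge_A B))$. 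Convergence would then follow from the connectivity of $B$: the skeletal filtration is exhaustive and, in each total degree, bounded, hence strongly convergent.

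The hard part will be this last identification of the $E^2$-page. It is exactly here that the connectivity hypothesis on $B$ is needed, to control the homotopy of the bar-construction layers so that they genuinely compute $\THH(B;-)$ with Eilenberg--MacLane coefficients rather than a twisted variant; and it is essentially a double-complex / bisimplicial bookkeeping argument, not a formal one. Multiplicativity, by contrast, is formal: all the bar and cyclic bar constructions involved are built from commutative algebras and so carry natural commutative multiplications, and geometric realisation carries smash products of simplicial spectra to smash products of realisations (by the cited properties of $|{-}|$), so the skeletal filtration — and hence the whole spectral sequence — is multiplicative, compatibly with the ring structure on $E^S_*(\THH(A;B))$ and the algebra structure on $\THH_*(B;HE^S_*(B\wedge_A B))$. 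The pattern $d^r\colon E^r_{n,m}\to E^r_{n-r,m+r-1}$ is the standard one for a skeletal/Tot filtration.
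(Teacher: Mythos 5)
This theorem is quoted verbatim from \cite{honing2020brun}; the present paper gives no proof of it, so your proposal has to be measured against Höning's argument, which the paper itself summarizes as expressing Brun's spectral sequence as an Atiyah--Hirzebruch-type spectral sequence. Measured that way, there is a genuine gap exactly at the step you flag as ``the hard part,'' and it is not bisimplicial bookkeeping: the filtration you chose cannot produce the stated $E^2$-page. In a skeletal filtration of a (bi)simplicial model the filtration index is a simplicial degree and the $E^1$-term consists of the homotopy groups of the bar layers; the resulting $E^2$ is then a $\operatorname{Tor}$ over $\pi_*(E\wedge(B\wedge_A B))$ (Künneth type) or a Hochschild-homology-of-homotopy-rings object (Bökstedt type), never groups of the form $\THH_n(B;HE^S_m(B\wedge_A B))$, in which the filtration index $m$ is an \emph{internal} homotopy degree and the coefficients are Eilenberg--MacLane spectra. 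Coefficients $HE^S_m(B\wedge_A B)$ graded by the filtration degree can only arise by filtering the coefficient spectrum itself by its Postnikov (Whitehead) tower, i.e.\ by an Atiyah--Hirzebruch-type filtration; the matching differential pattern $d^r\colon E^r_{n,m}\to E^r_{n-r,m+r-1}$ is shared by both kinds of filtration, so it gives no evidence that a skeletal filtration suffices.

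Concretely, the missing ingredient is the base change that makes $\THH$ of $B$ over $S$ appear at all: from $(B\wedge B)\wedge^L_{A^e}A\simeq B\wedge_A B$ one gets $\THH(A;B)\simeq \THH(B;B\wedge_A B)$, hence $E\wedge\THH(A;B)\simeq\THH(B;E\wedge(B\wedge_A B))$, and applying $\THH(B;-)$ to the Postnikov filtration of the coefficient spectrum yields precisely $E^2_{n,m}=\THH_n(B;H\pi_m(E\wedge(B\wedge_A B)))=\THH_n(B;HE^S_m(B\wedge_A B))$; connectivity of $B$ (hence of $B\wedge_A B$) is what this filtration's convergence rests on, and multiplicativity comes from the multiplicative structure of Postnikov towers of commutative algebras rather than from realization commuting with smash products. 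Your equivalence $\THH(A;B)\simeq B\wedge^L_{B\wedge_A B}\THH(A;B\wedge_A B)$ is correct, but it keeps the cyclic bar construction over $A$ and feeds it into a bar construction over $B\wedge_A B$; its skeletal filtration computes a different (Tor-type) $E^2$, and the ``reorganisation of the two simplicial directions'' you defer to is exactly where the argument would break down.
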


Topological Hochschild homology can also be computed using a Künneth spectral sequence.
\begin{proposition}[Lemma 2.2 and Corollary 2.3 of \cite{angeltveit2010topological}] \label{prop:simplebockstedtss}
  Suppose $R\rightarrow Q$ is a map of $S$-algebras and $M$ is a $(Q,R)$-bimodule, given the structure of an $(R,R)$-bimodule by pullback. Then there is a weak equivalence
  \begin{equation*}
 \THH(R;M) \simeq M\wedge^L_{Q\wedge R^{op}} Q 
\end{equation*}
  and thus a Künneth spectral sequence
  \begin{equation*}
 \operatorname{Tor}_{*,*}^{Q_*R^{op}}(M_*,Q_*) \Rightarrow \THH_*(R;M) .
\end{equation*}
\end{proposition}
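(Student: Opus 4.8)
The plan is to reduce the statement to a base-change manipulation of derived smash products over $S$-algebras, built on the bar-construction description of $\THH$ from Proposition~\ref{prop:thhandbar}. Applying that proposition over the base $S$ with $R$ in the role of the algebra gives a weak equivalence $\THH(R;M)\simeq M\wedge^L_{R^e}R$, where $R^e=R\wedge R^{op}$ and $R$ on the right denotes the diagonal bimodule; concretely the right-hand side is modelled by $M\wedge_{R^e}B(R,R,R)$ with $B(R,R,R)$ a cell $R^e$-module weakly equivalent to $R$, so no cofibrancy hypothesis on $M$ beyond its $(R,R)$-bimodule structure is required. That $(R,R)$-bimodule structure is, by hypothesis, the pullback along $R\to Q$ of the given $(Q,R)$-bimodule structure.

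First I would record that the $(Q,R)$-bimodule structure makes $M$ a module over the $S$-algebra $\Lambda:=Q\wedge R^{op}$, and that the $R^e$-module structure above is the restriction of this along the map of $S$-algebras $R^e\to\Lambda$ induced by $R\to Q$ (matching the left/right and $(-)^{op}$ conventions may require identifying $Q\wedge R^{op}$ with $R\wedge Q^{op}$ through the swap isomorphism). Then, inserting $\Lambda\wedge^L_\Lambda(-)$ and using associativity of the derived smash product,
\begin{equation}
  \THH(R;M)\;\simeq\;M\wedge^L_{R^e}R\;\simeq\;M\wedge^L_{\Lambda}\bigl(\Lambda\wedge^L_{R^e}R\bigr).
\end{equation}
The only step with mathematical content is the identification $\Lambda\wedge^L_{R^e}R\simeq Q$: viewing $\Lambda$ as the extension of scalars of $R^e$ obtained by replacing one tensor factor $R$ by $Q$ along $R\to Q$, associativity gives $\Lambda\wedge^L_{R^e}R\simeq Q\wedge_R\bigl(R^e\wedge^L_{R^e}R\bigr)\simeq Q\wedge_R R\simeq Q$, with $Q$ carrying the evident $\Lambda$-module structure. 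Combining the two displays yields the weak equivalence $\THH(R;M)\simeq M\wedge^L_{Q\wedge R^{op}}Q$.

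The Künneth spectral sequence is then immediate: apply the standard Künneth ($\operatorname{Tor}$) spectral sequence for a derived smash product over an $S$-algebra (\cite{elmendorf1997rings}) to $\Lambda=Q\wedge R^{op}$ with the $\Lambda$-modules $M$ and $Q$. It takes the form
\begin{equation}
  E^2_{*,*}=\operatorname{Tor}^{\Lambda_*}_{*,*}(M_*,Q_*)\;\Rightarrow\;\pi_*\bigl(M\wedge^L_{\Lambda}Q\bigr),
\end{equation}
and since $\Lambda_*=\pi_*(Q\wedge R^{op})=Q_*R^{op}$ and $\pi_*(M\wedge^L_\Lambda Q)=\THH_*(R;M)$ by the first part, this is the asserted spectral sequence.

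I expect the main obstacle to be bookkeeping rather than mathematics: one must keep track of left versus right module structures and of the various $(-)^{op}$'s so that $M$, the diagonal $R$, and $Q$ are genuinely modules over the algebras named, and one must apply the associativity and extension-of-scalars isomorphisms to honestly cofibrant (cell) models representing the derived smash products --- in particular checking that $B(R,R,R)$ and the extension-of-scalars functors produce cell modules, so that all of the pointwise smash products above compute the derived ones with no extra flatness hypotheses. Granting these routine facts about the category $\M_R$ of \cite{elmendorf1997rings}, the argument is a formal manipulation.
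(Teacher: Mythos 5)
Your argument is correct, but there is nothing internal to compare it against: the paper does not prove this proposition, it quotes it as Lemma 2.2 and Corollary 2.3 of \cite{angeltveit2010topological}. Your proof --- $\THH(R;M)\simeq M\wedge^L_{R^e}R$ from \cref{prop:thhandbar}, then the base change $M\wedge^L_{R^e}R\simeq M\wedge^L_{\Lambda}\bigl(\Lambda\wedge^L_{R^e}R\bigr)$ with $\Lambda=Q\wedge R^{op}$ and the identification $\Lambda\wedge^L_{R^e}R\simeq Q$, followed by the K\"unneth (Tor) spectral sequence of \cite{elmendorf1997rings} for $M\wedge^L_\Lambda Q$ --- is essentially the standard change-of-rings argument used in that cited source, so it is the same route, just carried out here rather than referenced. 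The one step worth making explicit is the one you isolate: $\Lambda\cong Q\wedge_R R^e$ as $(\Lambda,R^e)$-bimodules, and on cell models one has for instance
\begin{equation}
  \Lambda\wedge_{R^e}B(R,R,R)\cong B(Q,R,R)\simeq Q ,
\end{equation}
which settles the identification at the derived level and confirms that the remaining issues are only the left/right and cofibrancy bookkeeping you already flag.
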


The last spectral sequence we will use in our computation is the Bockstein spectral sequence. We now formulate its definition in the context of topological Hochschild homology.

Assume that $A$ is a commutative $R$-algebra, that $M$ is a connective, symmetric $(A,A)$-bimodule and that $m \in \pi_n(A)$ with $n \geq 0$. By acting on $M$, $m$ defines a map of $(A,A)$-bimodules $m:\Sigma^n M \rightarrow M$. Let $M/m$ be the cofiber
\begin{equation*}
  \begin{tikzcd}
    \Sigma^n M \rar["m"] & M \rar & M/m.
  \end{tikzcd}
\end{equation*}
We can define an exact couple from the tower of spectra with cofibers
\begin{equation*}
  \begin{tikzcd}
    \dots \rar["m"] & \Sigma^{2n} M \rar["m"] \dar & \Sigma^n M \rar["m"] \dar & M \dar \rar["id"] & \dots  \\
    & \Sigma^{2n}M/m & \Sigma^n M/m & M/m & 
  \end{tikzcd}
\end{equation*}
after smashing with $\THH(A)$ over $A$. The spectral sequence associated to this exact couple is called the Bockstein spectral sequence of the multiplication by $m$, and is of the form
\begin{equation*}
  E^1_{*,*} = \THH_*(A; M/m) \botimes P(m) \Rightarrow \THH_*(A; M)
\end{equation*}
where $\THH_*(A; M/m)$ is in bidegree $(*, 0)$, $m$ is in bidegree $(0, n)$ and the differentials are $m$-linear and of the form $d_i(\alpha) = m^i \beta$, thus in bidegree $|d_i| = (- ni  - 1, ni)$.

\begin{proposition}[Bockstein spectral sequence]
  If $A$ and $M$ are connective, the spectral sequence 
  \begin{equation*}
    \THH_*(A;M/m)\botimes P(m) \Rightarrow \THH_*(A;M).
  \end{equation*}
  is strongly convergent.
\end{proposition}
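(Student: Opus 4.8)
The plan is to unwind the construction: realize the claimed spectral sequence as that of the exact couple attached to the displayed tower (after smashing it with $-\wedge_A\THH(A)$), identify its $E^1$-page, and then deduce strong convergence from a connectivity estimate. First note that $M/m$, being the cofiber of a map of symmetric $(A,A)$-bimodules, i.e.\ of $A$-modules, is again a symmetric $(A,A)$-bimodule, and it is connective since $M$ and $\Sigma^n M$ are and $n\geq 0$. The functor $-\wedge_A\THH(A)$ is exact and commutes with suspension, and by the natural isomorphism $N\wedge_A\THH(A)\cong\THH(A;N)$ (applied to $N=M$ and $N=M/m$) it carries the displayed tower to a tower of $\THH(A)$-modules
\begin{equation*}
  \cdots\xrightarrow{m}\Sigma^{2n}\THH(A;M)\xrightarrow{m}\Sigma^{n}\THH(A;M)\xrightarrow{m}\THH(A;M),
\end{equation*}
whose $s$-th cofiber is $\Sigma^{sn}\THH(A;M/m)$ for $s\geq 0$.

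Taking homotopy groups yields the exact couple with $D^1=\bigoplus_{s\geq 0}\THH_{*-sn}(A;M)$ and $E^1=\bigoplus_{s\geq 0}\THH_{*-sn}(A;M/m)$; the latter is precisely $\THH_*(A;M/m)\botimes P(m)$, with $m$ of internal degree $n$ sitting in filtration $s=1$, and the algebra structure induced by the $\THH(A)$-module structure together with the self-map $m$. A diagram chase in the exact couple identifies $d^1$ with the map obtained by applying $\THH(A;-)$ to the Bockstein $M/m\to\Sigma^{n+1}M\to\Sigma^{n+1}(M/m)$. This gives the stated $E^1$-page and differential; it remains to prove strong convergence, i.e.\ that the exhaustive decreasing filtration $F^s\THH_*(A;M)=\operatorname{im}\bigl(m^s\colon\THH_{*-sn}(A;M)\to\THH_*(A;M)\bigr)$ is complete and is correctly computed by $E^\infty$.

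The key input here is that $A$ and $M$ connective force $\THH(A;M)$ connective: it is the realization of a simplicial spectrum whose terms $M\wedge_R A^{\wedge n}$ are connective, and geometric realization of a simplicial connective spectrum is connective. Hence $\THH_{t-sn}(A;M)=0$ as soon as $sn>t$, so (when $n\geq 1$) in each total degree $t$ the tower $\{\Sigma^{sn}\THH(A;M)\}_s$ is eventually zero; it is therefore pro-zero, its homotopy limit is contractible and the relevant $\varprojlim^1$ vanishes (yielding conditional convergence), while $F^\bullet$ takes only finitely many values in each degree. The standard convergence criterion for a conditionally convergent spectral sequence with $RE^\infty=0$ then gives strong convergence to $\THH_*(A;M)$.

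The substantive content is thus exactness plus a connectivity count; the one point requiring care is the bookkeeping of the exact couple — pinning down the filtration and the internal degree of $m$ and verifying that the $E^1$-algebra really is $\THH_*(A;M/m)\botimes P(m)$ — together with the degenerate case $n=0$, where connectivity no longer makes the tower pro-zero. The self-maps to which this proposition is applied in the sequel are multiplication by $u$ and $v_1$, both of positive degree, so the connectivity argument applies directly; for $n=0$ (multiplication by $p$) one must instead know a priori that $\THH_*(A;M)$ is derived $m$-complete in each degree, after which the same couple yields the conclusion. I expect this $n=0$ caveat, rather than anything in the main line of argument, to be the only delicate point.
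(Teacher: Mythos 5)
Your argument is correct and takes essentially the same route as the paper: smash the tower with $\wedge_A\THH(A)$, use connectivity of $\THH(A;M)$ (the paper gets this from the K\"unneth spectral sequence rather than from realization of a connective simplicial spectrum) to see that $\lim_{k}(\Sigma^{kn}\THH(A;M))_*=0$, and invoke the Boardman-type criterion, which is the paper's \cref{prop:convergencessnicecase}. Your caveat about $n=0$ is a fair observation — the paper's proof also needs the suspension to be positive for the limit to vanish — but in all applications here $m$ is multiplication by $u$ or $v_1$, of positive degree, so the argument goes through as you say.
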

\begin{proof}
  By Proposition~\ref{prop:simplebockstedtss}, there is a Künneth spectral sequence
  \begin{equation*}
    \operatorname{Tor}^{A_*A^{op}}(A_*, A_*) \Rightarrow \THH_*(A).
  \end{equation*}
  When $A$ is connective, $\operatorname{Tor}^{A_*A^{op}}(A_*, A_*)$ is connective and so is $\THH(A)$. Our tower of spectra is
  \begin{equation*}
    \begin{tikzcd}
      \dots \rar["m"] & \Sigma^n \THH(A;M) \rar["m"] \dar & \THH(A;M) \dar \\
      & \Sigma^n \THH(A;M/m) & \THH(A;M/m)
    \end{tikzcd}
  \end{equation*}
  and $\lim_{k\in\Z}(\Sigma^{kn}\THH(A;M))_* = 0$ because of the suspension. Thus by Proposition~\ref{prop:convergencessnicecase} the spectral sequence is strongly convergent.
\end{proof}

\subsection{Smashing localizations and THH}

Let $R$ be a cofibrant commutative $S$-algebra, $A$ a cofibrant $R$-algebra and $M$ an $(A,A)$-bimodule. Let $E$ be a cell $R$-module. We will study the Bousfield localization at $E$, whose definition and useful properties can be found in chapter VIII of \cite{elmendorf1997rings}. We suppose that the Bousfield localization at $E$ of an $R$-module is smashing, that is, the localization of any $R$-module $X$, denoted $X_E$, can be realized as $R_E \wedge_R X$ where $R_E$ is the Bousfield localization of $R$ at $E$. We can construct $R_E$ to be an $R$-algebra and the localization map $\lambda: R\rightarrow R_E$ to be an algebra map. Then the localization map of $A$
\begin{equation*}
  \begin{tikzcd}
    \lambda: A \rar["\simeq"] & R\wedge_R A \rar["\lambda\wedge id"] & R_E\wedge_R A
  \end{tikzcd}
\end{equation*}
can be seen to be an $R$-algebra map, where the multiplication on $R_E\wedge_R A$ is
\begin{equation*}
  \begin{tikzcd}
    R_E\wedge_R A\wedge_R R_E\wedge_R A \rar["id\wedge\tau\wedge id"] & R_E\wedge_R R_E \wedge_R A\wedge_R A \rar["\mu\wedge\nu"] & R_E \wedge_R A
  \end{tikzcd}
\end{equation*}
where $\tau$ switches the two factors and $\mu$ and $\nu$ are the multiplications. Similarly, $M_E$ can be given both the structure of an $(A,A)$-bimodule, making $\lambda$ an $(A,A)$-bimodule map, and an $(A_E,A_E)$-bimodule structure.

\begin{proposition}\label{prop:localizationthh}
  If the above conditions are met, then there are weak equivalences
  \begin{equation*}
    \THH^R(A;M)_E \cong \THH^R(A;M_E) 
  \end{equation*}
  and 
  \begin{equation*}
    \THH^R(A;M_E) \simeq \THH^R(A_E;M_E) .
  \end{equation*}
\end{proposition}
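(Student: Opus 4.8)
The plan is to handle the two assertions separately: the isomorphism levelwise in the simplicial direction, and the weak equivalence by passing to the enveloping algebra of $A$.

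For the isomorphism $\THH^R(A;B)_E\cong\THH^R(A;B_E)$, I would start from the smashing hypothesis, which gives $\THH^R(A;B)_E\cong R_E\wedge_R\THH^R(A;B)$ and $B_E\cong R_E\wedge_R B$, the latter carrying its induced $(A,A)$-bimodule structure. Since the geometric realization is a coend, hence a colimit, it commutes with the left adjoint $R_E\wedge_R(-)$, so it suffices to produce an isomorphism of simplicial $R$-modules $R_E\wedge_R\THH^R(A;B)_\bullet\cong\THH^R(A;B_E)_\bullet$. On the $n$-th level this is just associativity and commutativity of $\wedge_R$ — recall $R$ is commutative — giving $R_E\wedge_R(B\wedge_R A^{\wedge n})\cong(R_E\wedge_R B)\wedge_R A^{\wedge n}=B_E\wedge_R A^{\wedge n}$. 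First I would check compatibility with the face and degeneracy maps: this amounts to observing that smashing $\xi_r$, $\phi$, $\xi_\ell$ and the cyclic permutation $\tau$ with $R_E$ over $R$ reproduces precisely the structure maps defining $\THH^R(A;B_E)_\bullet$, which is immediate from how the bimodule structure on $B_E$ was built. Passing to realizations then finishes this part.

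For the weak equivalence $\THH^R(A;B_E)\simeq\THH^R(A_E;B_E)$, I would use \cref{prop:thhandbar} to rewrite each side as a derived smash product over an enveloping algebra, $\THH^R(A;B_E)\simeq B_E\wedge^L_{A^e}A$ and $\THH^R(A_E;B_E)\simeq B_E\wedge^L_{A_E^e}A_E$ with $A^e=A\wedge_R A^{op}$, inserting cell approximations (or keeping $B^R(A,A,A)$ in place of $A$) wherever cofibrancy is needed. The crucial observation is that the enveloping algebra of $A_E$ is the $E$-localization of the enveloping algebra of $A$: since $A_E\cong R_E\wedge_R A$ and forming the opposite algebra does not change underlying modules, commutativity of $\wedge_R$ and the idempotence $R_E\wedge_R R_E\simeq R_E$ of a smashing localization give $A_E^e=A_E\wedge_R A_E^{op}\simeq R_E\wedge_R(A\wedge_R A^{op})=(A^e)_E$, compatibly with the localization map $A^e\to A_E^e$. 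Then base change along $A^e\to A_E^e$, together with $(A^e)_E\wedge^L_{A^e}A\simeq R_E\wedge_R(A^e\wedge^L_{A^e}A)\simeq R_E\wedge_R A=A_E$, yields
\begin{equation*}
  \THH^R(A;B_E)\simeq B_E\wedge^L_{A^e}A\simeq B_E\wedge^L_{A_E^e}\left(A_E^e\wedge^L_{A^e}A\right)\simeq B_E\wedge^L_{A_E^e}A_E\simeq\THH^R(A_E;B_E).
\end{equation*}

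The main obstacle is not conceptual but one of bookkeeping: checking that all of these identifications are compatible with the relevant algebra and bimodule structures, so that the displayed equivalences are equivalences of $A$-modules (indeed of $\THH^R(A)$-modules) and not merely of underlying $R$-modules — in particular that $(A^e)_E\wedge^L_{A^e}A$ carries the standard bimodule structure of $A_E$ and that $A_E^e\simeq(A^e)_E$ as $A^e$-algebras. A secondary point to handle carefully is the cofibrancy bookkeeping for passing between $\wedge_{A^e}$ and $\wedge^L_{A^e}$ and between $B^R(A,A,A)$ and $A$; this is harmless because $\THH^R(A;-)$ preserves weak equivalences — each $A^{\wedge n}$ is cofibrant over $R$, so $(-)\wedge_R A^{\wedge n}$ is homotopical, and geometric realization is homotopy-invariant on the relevant simplicial objects.
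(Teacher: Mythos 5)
Your first part is essentially the paper's own argument: identify $\THH^R(A;B)_E$ with the realization of $R_E\wedge_R\THH^R(A;B)_\bullet$ and observe levelwise that this is $\THH^R(A;B_E)_\bullet$, the structure maps matching by construction of the bimodule structure on $B_E$. For the second equivalence, however, you take a genuinely different route. The paper stays simplicial: it builds an explicit map of simplicial $R$-modules $\THH^R(A;B_E)_\bullet\rightarrow\THH^R(A_E;B_E)_\bullet$ which on the $n$-th level inserts $R_E^{\wedge n}$ via the unit and the weak equivalence $R_E\simeq R_E\wedge_R R_E$, and then invokes theorem X.1.2 of \cite{elmendorf1997rings} (after cellular replacement) to pass to realizations. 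You instead rewrite both sides as derived smash products over enveloping algebras via \cref{prop:thhandbar}, identify $A_E^e\simeq (A^e)_E$ using the same idempotence $R_E\wedge_R R_E\simeq R_E$, and conclude by base change along $A^e\rightarrow A_E^e$ together with $A_E^e\wedge^L_{A^e}A\simeq A_E$. Both arguments hinge on exactly the same input (idempotence of a smashing localization); the paper's version is more elementary and keeps all identifications visibly compatible with the simplicial structure, at the price of an explicit levelwise construction, while yours is more structural and makes the result a formal change-of-rings statement, at the price of the bookkeeping you yourself flag: that $A_E^e\simeq(A^e)_E$ as $A^e$-algebras, that $A_E^e\wedge^L_{A^e}A$ carries the standard bimodule structure of $A_E$, and that cell/cofibrancy hypotheses (e.g.\ for applying \cref{prop:thhandbar} to $A_E$, which is not given as a cofibrant $R$-algebra and must be replaced) are in place so that the underived smash products genuinely compute the derived ones. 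Those points are standard in the EKMM framework and your outline of how to discharge them is sound, so the proposal is correct as a proof sketch.
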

\begin{proof}
  $\THH^R(A;M)_E$ can be seen to be the realization of the simplicial object $R_E\wedge_R \THH^R(A;M)_\bullet$, which is also $\THH^R(A;M_E)_\bullet$. This yields the first weak equivalence.

  The map $\lambda: R_E \rightarrow R_E\wedge_R R_E$ as defined above is an $E$-equivalence between $E$-local $R$-modules, and thus a weak equivalence.  Define a simplicial map
  \begin{equation*}
 \THH^R(A;M_E)_\bullet \rightarrow \THH^R(A_E;M_E)_\bullet
\end{equation*}
  such that on the $n$-th simplicial level we have:
  \begin{center}
    \begin{tikzcd}
      M_E\wedge_R A^{\wedge n} = R_E\wedge_R M \wedge_R A^{\wedge n} \dar["\simeq"] \\
      R_E\wedge_R R^{\wedge n}\wedge_R M \wedge_R A^{\wedge n} \dar["id\wedge\lambda^n\wedge id"] \\
      R_E\wedge_R R_E^{\wedge n}\wedge_R M \wedge_R A^{\wedge n} \dar["\tau"] \\
      R_E\wedge_R M \wedge_R (R_E\wedge_R A)^{\wedge n}=M_E\wedge_R A_E^{\wedge n} .
    \end{tikzcd}
  \end{center}
  Each of these maps is a weak equivalence, so by taking a suitable cellular replacement and by Theorem X.1.2 of \cite{elmendorf1997rings}, we get a weak equivalence between the realizations.
\end{proof}

\section{Review of the results on $\ell$ and $ku$}
\label{chap:reviewthhkul}

In this section we review some results about $\THH_*(ku;H\Z_{(p)})$, $\THH_*(\ell;H\Z_{(p)})$, $\THH_*(\ell)$ and the periodic spectra $\THH(KU)$ and $\THH(L)$. We first give a computation of $\THH_*(ku;H\Z_{(p)})$ using the Brun spectral sequence in Section~\ref{section:thhkuhz}. The Bockstein spectral sequence \eqref{ss:l}, computing $\THH_*(\ell)$, was established in \cite{angeltveit2010topological}, and we review this result in Section~\ref{section:thhl}.

Our $q$-cofibrant commutative $S$-algebra model for the connective complex $K$-theory spectrum $ku$ will be that of Theorem VII.4.3 of \cite{elmendorf1997rings}; regardless of the choice of model, the $E_\infty$ structure on $ku$ can be seen to be unique (see \cite{baker2008uniqueness}). We fix a prime $p$ and write $ku$ for the $p$-localized connective complex $K$-theory and $\ell$ its Adams summand. We obtain an $S$-algebra structure on the localization using the result on Bousfield localization stated in Proposition VIII.1.8 of \cite{elmendorf1997rings}.

\subsection{Topological Hochschild homology of $ku$ with coefficients in $H\Z_{(p)}$}
\label{section:thhkuhz}

In this section, we compute $\THH_*(ku;H\Z_{(p)})$ with $p$ an odd prime. When $p=2$, we have $ku = \ell$; results about $\ell$ are in Section~\ref{section:thhl}. We use the Brun spectral sequence of Theorem~\ref{the:brunsshoning}:
\begin{equation}
  E^2_{p,q}=\THH_p(H\Z_{(p)};H\pi_q(H\Z_{(p)}\wedge_{ku}H\Z_{(p)}))\Rightarrow \THH_{p+q}(ku;H\Z_{(p)}) \tag{$u_\Z$}\label{ss:uZ}.
\end{equation}

The Künneth spectral sequence can be used to compute the coefficients.
\begin{proposition}
  There is an isomorphism
  \begin{equation*}
    \pi_*(H\Z_{(p)}\wedge_{ku}H\Z_{(p)}) \cong E(\sigma u)
  \end{equation*}
  where $E(\sigma u)$ is an exterior algebra over $\Z_{(p)}$ on the generator $\sigma u$ of degree 3.
\end{proposition}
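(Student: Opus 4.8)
The plan is to compute $\pi_*(H\Z_{(p)}\wedge_{ku}H\Z_{(p)})$ via the Künneth spectral sequence of \cref{prop:simplebockstedtss}, or more directly via the standard base-change Tor spectral sequence
\begin{equation}
  \operatorname{Tor}^{ku_*}_{*,*}(\Z_{(p)},\Z_{(p)}) \Rightarrow \pi_*(H\Z_{(p)}\wedge_{ku}H\Z_{(p)}).
\end{equation}
Here $ku_* = \Z_{(p)}[u]$ with $|u|=2$, and $H\Z_{(p)} = ku/u$ as a $ku$-module, so the $ku_*$-module structure on $\Z_{(p)}$ is the quotient by $u$. First I would write down the Koszul resolution of $\Z_{(p)}$ over the polynomial ring $\Z_{(p)}[u]$:
\begin{equation}
  0 \leftarrow \Z_{(p)} \leftarrow \Z_{(p)}[u] \xleftarrow{\cdot u} \Z_{(p)}[u] \leftarrow 0,
\end{equation}
which is a free resolution of length one. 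Tensoring with $\Z_{(p)}$ over $\Z_{(p)}[u]$ makes the map $\cdot u$ into the zero map, so $\operatorname{Tor}^{ku_*}_0 = \Z_{(p)}$ (in internal degree $0$) and $\operatorname{Tor}^{ku_*}_1 = \Z_{(p)}$ (in internal degree $2$, shifted to total degree $3$ by the homological grading), with all higher Tor vanishing. This gives $E^2_{*,*} = E(\sigma u)$ with $\sigma u$ in total degree $3$.

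Next I would argue that the spectral sequence collapses: it is concentrated in homological degrees $0$ and $1$, so there is no room for differentials, and there are no extension problems since the only nonzero groups lie in total degrees $0$ and $3$ with no common total degree, and each is a free $\Z_{(p)}$-module. This identifies $\pi_*(H\Z_{(p)}\wedge_{ku}H\Z_{(p)})$ additively with $E(\sigma u)$ over $\Z_{(p)}$. For the multiplicative structure, I would invoke that $H\Z_{(p)}\wedge_{ku}H\Z_{(p)}$ is a ring spectrum (since $H\Z_{(p)} \simeq \ell/v_1$ or $ku/v_1$ composed appropriately, or more simply since $\Z_{(p)}$ is a commutative $ku$-algebra via $ku \to H\Z_{(p)}$), and the multiplicative Künneth/Tor spectral sequence is compatible with products; the product on $\operatorname{Tor}^{\Z_{(p)}[u]}(\Z_{(p)},\Z_{(p)})$ is the standard one on a Koszul/Tor algebra of a regular element, which is exterior on the degree-one class $\sigma u$. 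Since $|\sigma u|=3$ is odd and we are over $\Z_{(p)}$, the class $(\sigma u)^2$ lands in $\operatorname{Tor}_2 = 0$, so the exterior relation holds on the nose with no room for a multiplicative extension.

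The main obstacle, such as it is, is confirming the multiplicative structure rather than the additive one: one must make sure the product $(\sigma u)^2 = 0$ is genuine and not merely "zero on the associated graded," but this is automatic here because $\operatorname{Tor}_2^{ku_*}(\Z_{(p)},\Z_{(p)}) = 0$ so there is simply nowhere for $(\sigma u)^2$ to live. One should also check the naming is consistent: the generator $\sigma u$ here is the image of the polynomial generator $u \in ku_* = \pi_* ku$ under the suspension map $\pi_*(ku) \to \pi_{*+1}(H\Z_{(p)}\wedge_{ku}H\Z_{(p)})$ arising from the bar/Tor filtration, which matches the notation $\sigma u$ used elsewhere in the paper (e.g.\ in the proof of \cref{prop:torsioncoincide}) for the corresponding class in $\THH$. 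Everything else — the length-one resolution, the collapse, the degree count — is routine.
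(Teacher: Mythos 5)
Your proposal is correct and follows essentially the same route as the paper: a Koszul (length-one) resolution of $\Z_{(p)}$ over $ku_*=\Z_{(p)}[u]$ gives $\operatorname{Tor}^{ku_*}(\Z_{(p)},\Z_{(p)})\cong E(\sigma u)$, and the Künneth spectral sequence collapses for bidegree reasons with no possible extensions. Your extra remarks on the multiplicative structure and the naming of $\sigma u$ are consistent with, and only slightly more detailed than, the paper's argument.
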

\begin{proof}
  $\Z_{(p)}$ has a resolution as a free $ku_*$-module given by $E(\sigma u)$, with $\sigma u$ of bidegree $(1,2)$ and $d(\sigma u) = u$, so that $\operatorname{Tor}_{*,*}^{ku_*}(\Z_{(p)},\Z_{(p)})\cong E(\sigma u)$. Then the Künneth spectral sequence
  \begin{equation*}
    E^2_{p,q}=\operatorname{Tor}_{p,q}^{ku_*}(\Z_{(p)},\Z_{(p)})\Rightarrow \pi_{p+q}(H\Z_{(p)}\wedge_{ku}H\Z_{(p)})
  \end{equation*}
  collapses for bidegree reasons with no possible extensions.
\end{proof}

The $E^2$-page of our Brun spectral sequence will then consist of two copies of $\THH_*(H\Z_{(p)};H\Z_{(p)})=\THH_*(H\Z_{(p)})$. Topological Hochschild homology of $H\Z$ was computed by Bökstedt in \cite{bokstedtthhzzp}:
\begin{equation*}
  \THH_k(H\Z)=
  \begin{cases}
      \Z & \mbox{if $k=0$} \\
      0 & \mbox{if $k\geq 2$ is even} \\
      \Z/n & \mbox{if $k=2n-1\geq 2$.} \\
    \end{cases}
  \end{equation*}
  Since localization at $p$ is smashing, we have
  \begin{equation*}
    \THH_k(H\Z_{(p)})=
    \begin{cases}
      \Z_{(p)} & \mbox{if $k=0$} \\
      0 & \mbox{if $k\geq 2$ is even} \\
      \Z/\nu(n) & \mbox{if $k=2n-1\geq 2$} \\
    \end{cases}
  \end{equation*}
  where $\nu$ is the $p$-adic valuation.
  Let $\mu_{n}$ be a generator of the copy of $\Z/\nu(n)$ in degree $2n-1$. If $n$ is not divisible by $p$, then $\mu_{n} = 0$. We will also use the convention $\mu_0 = 1$ in our formulas.

  \begin{proposition}
    When $p$ is an odd prime, the spectral sequence \eqref{ss:uZ} collapses at the $E^2$-page. There are no extensions, and
    \begin{equation*}
      \THH_*(ku; H\Z_{(p)}) \cong \THH_*(H\Z_{(p)}) \otimes E(\sigma u)
    \end{equation*}
    over $\Z_{(p)}$ with $\sigma u$ in degree 3.
  \end{proposition}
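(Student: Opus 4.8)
The plan is to identify the $E^2$-page explicitly, observe that the bigrading leaves room for essentially a single potential family of differentials, rule those out by an elementary divisibility argument that uses $p\neq 2$, and finally dispatch the extension and multiplicativity statements, which turn out to be forced by the same parity phenomenon. The substantive point — and what one might fear is an obstacle — is the vanishing of $d^4$ together with the absence of extensions; both reduce to the single observation that $p$ is odd, so neither is a real obstacle.

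First I would pin down $E^2$. By the previous proposition $\pi_*(H\Z_{(p)}\wedge_{ku}H\Z_{(p)})\cong E(\sigma u)$ is free over $\Z_{(p)}$ and concentrated in internal degrees $0$ and $3$, so $H\pi_q(H\Z_{(p)}\wedge_{ku}H\Z_{(p)})\simeq H\Z_{(p)}$ for $q\in\{0,3\}$ and is contractible otherwise; using Bökstedt's computation of $\THH_*(H\Z_{(p)})$ recalled above, this gives
\begin{equation*}
  E^2_{n,q}\cong
  \begin{cases}
    \THH_n(H\Z_{(p)}) & q\in\{0,3\}\\
    0 & \text{otherwise,}
  \end{cases}
\end{equation*}
so that $E^2\cong \THH_*(H\Z_{(p)})\otimes E(\sigma u)$ as bigraded rings, with $\sigma u$ in bidegree $(0,3)$ and squaring to zero for degree reasons.

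Next, the differentials. They have the shape $d^r\colon E^r_{n,q}\to E^r_{n-r,q+r-1}$, and $E^2$ is supported on the two lines $q=0$ and $q=3$. A differential leaving the line $q=0$ lands on the line $q=r-1$, which meets the support only when $r=4$; a differential leaving the line $q=3$ lands on $q=r+2\geq 4$, entirely off the support. Hence the only possibly nonzero differential is $d^4\colon E^4_{n,0}\to E^4_{n-4,3}$, i.e.\ a map $\THH_n(H\Z_{(p)})\to\THH_{n-4}(H\Z_{(p)})$. After $p$-localization $\THH_k(H\Z_{(p)})$ is nonzero only for $k=0$ and for $k=2j-1$ with $p\mid j$. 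For such a $d^4$ to be nonzero we would therefore need $n=2k-1$ with $p\mid k$ and simultaneously $n-4=2j-1$ with $p\mid j$ (the alternative $n-4=0$ is impossible since $\THH_4(H\Z_{(p)})=0$); subtracting, $p\mid(k-j)=2$, which is impossible for $p$ odd. So $d^4=0$, every page agrees with $E^2$, and the spectral sequence collapses.

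It remains to rule out extensions, and this is where the same parity observation does the work. In total degree $t$ the two surviving lines contribute $E^\infty_{t,0}=\THH_t(H\Z_{(p)})$ and $E^\infty_{t-3,3}=\THH_{t-3}(H\Z_{(p)})$; the first is nonzero only for $t=0$ or $t=2k-1$ with $p\mid k$ (so $t$ odd), the second only for $t=3$ or $t=2j+2$ with $p\mid j$ (so $t$ even). For $p$ odd no $t$ lies in both families — the case $t=3$ is excluded because $3=2k-1$ would force $p\mid 2$ — so at most one filtration quotient of $\THH_t(ku;H\Z_{(p)})$ is nonzero, the filtration is trivial in each total degree, and there are no additive extensions. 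There are no multiplicative ones either: the only products to check are $\mu_m\mu_n$, $\mu_n\,\sigma u$ and $\sigma u\cdot\sigma u$, and each lands in a total degree where at most one line survives, so the product is already the expected element (or zero) of $E^\infty$. This identifies $\THH_*(ku;H\Z_{(p)})$ with $\THH_*(H\Z_{(p)})\otimes E(\sigma u)$ over $\Z_{(p)}$, with $\sigma u$ in degree $3$.
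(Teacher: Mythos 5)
Your proposal is correct and follows essentially the same route as the paper: identify the $E^2$-page as two copies of $\THH_*(H\Z_{(p)})$, note that bidegree considerations leave only a possible $d^4$ from $\mu_{k}$ to $\sigma u\,\mu_{k-2}$, rule it out because $p$ odd cannot divide two integers differing by $2$, and observe that surviving generators occupy distinct total degrees so no extensions occur. Your additional remarks on multiplicative extensions go slightly beyond the paper's proof but do not change the argument.
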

  \begin{proof}
    For bidegree reasons, the only possible non-zero differentials are the $d^4$ between $\mu_{n+2}$ and $\sigma u\mu_{n}$. But if $p\geq 3$ divides $n+2$, it cannot divide $n$, so that at least one of $\mu_{n+2}$ or $\sigma u \mu_{n}$ is zero, and the spectral sequence collapses.

    Since there is at most one generator in each degree, no extensions are possible.
  \end{proof}

\subsection{Topological Hochschild homology of $\ell$}
\label{section:thhl}

In this section, we will review the results of \cite{angeltveit2010topological} on $\THH_*(\ell)$, for any prime $p$. The first spectral sequence, denoted \eqref{ss:lZ}, is a Brun spectral sequence:
\begin{gather}
  E^2_{*,*} = \THH_*(H\Z_{(p)}; H(H\Z_{(p)}\wedge_\ell H\Z_{(p)})_*) \cong \THH_*(H\Z_{(p)})\botimes E(\sigma v_1)  \notag \\
  \Rightarrow \THH_*(\ell;H\Z_{(p)}) \tag{$\ell_\Z$}\label{ss:lZ}
\end{gather}
The generators have bidegrees 
\begin{equation*}
  \begin{aligned}
    & |\mu_{kp}| = (2kp-1,0) \mbox{, $k\geq 1$ the generators of $\THH_*(H\Z_{(p)})$} \\
    & |\sigma v_1| = (0,2p-1)\\
    & |v_1| = (0,2(p-1)).
  \end{aligned}
\end{equation*}
and $|d^r| = (-r, r - 1)$.

The second spectral sequence, denoted \eqref{ss:l}, is a Bockstein spectral sequence:
\begin{equation}
  E^1_{*,*} = \THH_*(\ell;H\Z_{(p)})\botimes P(v_1) \Rightarrow \THH_*(\ell) \tag{$\ell$}. \label{ss:l}
\end{equation}
Here, the elements of $\THH_*(\ell; H\Z_{(p)})$ are in bidegrees $(*, 0)$, $v_1$ has bidegree $(0, 2(p-1))$ and the differentials have bidegree $|d^r| = (-2r(p-1) - 1 , 2r(p-1))$.

Let $\mathcal{R}$ be a (discrete) commutative ring, and $\mathcal{A}$ an $\mathcal{R}$-algebra. When necessary, we will use $x\cdot y$ for the $\mathcal{R}$-action of $x\in \mathcal{R}$ on $y\in \mathcal{A}$, and $xy$ for the product of $x,\,y\in \mathcal{A}$.  From \cite{angeltveit2010topological}, Proposition 3.4, which computes $\THH_*(\ell;H\Z_{(p)})$ we can deduce:
\begin{proposition}\label{prop:diffin4}
  All the differentials in \eqref{ss:lZ} are given by the formulas:
  \begin{equation*}
    d^{2p}(\mu_{(k+1)p})=p^{\nu(k)}\cdot\sigma v_1\mu_{kp}
  \end{equation*}
  up to a unit, where $k\geq 1$ and $\nu$ is the $p$-adic valuation.

  There is an extension given by $p\mu_p = \sigma v_1$.
\end{proposition}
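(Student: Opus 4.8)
The plan is to treat proposition~3.4 of \cite{angeltveit2010topological}, i.e.\ the value of $\THH_*(\ell;H\Z_{(p)})$, as the sole external input and to recover the differentials and the extension of \eqref{ss:lZ} by matching its $E^\infty$-page against that answer. The $E^1$-page is $\THH_*(H\Z_{(p)})\botimes E(\sigma v_1)$, hence concentrated in the two rows $y=0$ and $y=2p-1$: on the $x$-axis sits $\THH_*(H\Z_{(p)})$, with $\mu_0=1$ at the origin and $\mu_{kp}$ generating a cyclic group of order $p^{\nu(kp)}$ in bidegree $(2kp-1,0)$ (here $\nu$ is the valuation, so $\nu(kp)=\nu(k)+1$), and above it the translate $\sigma v_1\cdot\THH_*(H\Z_{(p)})$ with $\sigma v_1$ generating a free $\Z_{(p)}$ in bidegree $(0,2p-1)$.

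First I would run the sparseness argument. Since $|d^r|=(-r-1,r)$ and only the rows $y=0$ and $y=2p-1$ are nonzero, any nonzero differential must run from the bottom row to the top row, which forces $r=2p-1$; comparing $x$-coordinates, $d^{2p-1}$ can only send $\mu_{(k+1)p}$ (at $x$-degree $2(k+1)p-1$) to a multiple of $\sigma v_1\mu_{kp}$ (at $x$-degree $2kp-1$). For $k=0$ the target lies in negative filtration, so $1$, $\mu_p$ and $\sigma v_1$ are permanent cycles ($1$ and $\sigma v_1$ also by the edge, and $\sigma v_1$ cannot be hit because $\THH_{2p}(H\Z_{(p)})=0$). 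Thus the only possibly nonzero differentials are $d^{2p-1}(\mu_{(k+1)p})=c_k\cdot\sigma v_1\mu_{kp}$ for $k\geq 1$, with $c_k$ in the order-$p^{\nu(kp)}$ group; as neither source nor target is touched by any other differential, the surviving group at $\sigma v_1\mu_{kp}$ is the cokernel $(\Z/p^{\nu(kp)})/\langle c_k\rangle$ and the surviving group at $\mu_{(k+1)p}$ is the kernel of this map.

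To pin down $c_k$ I would compare with \cite{angeltveit2010topological} in total degree $2(k+1)p-2$, which contains no other $E^1$-generator (the $\mu$'s lie in odd degrees and the other $\sigma v_1\mu_{jp}$ in other degrees); hence $\THH_{2(k+1)p-2}(\ell;H\Z_{(p)})\cong(\Z/p^{\nu(kp)})/\langle c_k\rangle$, and writing $c_k=p^{a_k}\cdot(\text{unit})$ this quotient has order $p^{a_k}$, so matching the order recorded in \cite{angeltveit2010topological} yields $a_k=\nu(k)$, i.e.\ the stated formula up to a unit; in every other total degree $E^\infty$ meets at most one cyclic summand, so this already determines the full pattern of differentials. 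For the extension, total degree $2p-1$ is the exception: here $E^\infty$ is $\Z/p\{\mu_p\}$ in filtration $2p-1$ together with $\Z_{(p)}\{\sigma v_1\}$ in filtration $0$, while \cite{angeltveit2010topological} records a single $\Z_{(p)}$; the nontrivial extension of $\Z/p$ by $\Z_{(p)}$ has total group $\Z_{(p)}$, and since the filtration increases with the $x$-degree the subgroup is $\Z_{(p)}\{\sigma v_1\}$, so $p$ times a lift of $\mu_p$ is a unit multiple of $\sigma v_1$ — the asserted relation $p\mu_p=\sigma v_1$.

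The main obstacle is organizational rather than conceptual. One has to be disciplined about the reindexing that produces $|d^r|=(-r-1,r)$, about the two incompatible uses of $\nu$ in the source material (the valuation, as in the exponent $p^{\nu(k)}$, and the $p$-part, as in the notation $\Z/\nu(n)$ for $\THH_*(H\Z_{(p)})$), and — most delicately — about the filtration convention, since it is exactly the direction of the filtration that decides whether the hidden extension in degree $2p-1$ reads $p\mu_p=\sigma v_1$ or the reverse; one should also verify that \cite{angeltveit2010topological} presents its answer in a form from which the relevant group orders and the degree-$(2p-1)$ structure can be read off directly.
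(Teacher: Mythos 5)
Your proposal is correct and follows the same route the paper intends: the paper states this proposition as a direct deduction from the computation of $\THH_*(\ell;H\Z_{(p)})$ in \cite{angeltveit2010topological}, and your sparseness argument (two nonzero rows forcing $r=2p-1$), the order-matching in total degree $2(k+1)p-2$, and the extension analysis in degree $2p-1$ are exactly the details being elided. As a small remark, the filtration-direction worry at the end is not essential: since $\THH_{2p-1}(\ell;H\Z_{(p)})\cong\Z_{(p)}$ is torsion-free, the $\Z/p$ graded piece cannot be the subobject, so the nonsplit extension $p\mu_p=\sigma v_1$ (up to a unit) is forced regardless of convention.
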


The spectral sequence \eqref{ss:l} is also computed in \cite{angeltveit2010topological}. We will use the following notations:
\begin{equation*}
  \THH_*(\ell;H\Z_{(p)}) \cong \Z_{(p)}\{1,\, \mu_p\}\oplus\bigoplus_{k\geq 2} \faktor{\Z}{p^{\nu(k)}}\{v_0\mu_{kp},\, \sigma v_1 \mu_{kp}\} .
\end{equation*}
Here from the Brun spectral sequence \eqref{ss:lZ}  we have $\sigma v_1 = p\cdot \mu_p$ and $v_0\mu_{kp}$ is a class in $\THH$ represented by $p\cdot\mu_{kp}\in E^\infty$. As in \cite{angeltveit2010topological}, we write $v_0^k \alpha$ for a lift of a class $p^k \cdot \alpha$ from the spectral sequence.

\begin{theorem}[Theorem 6.4 of \cite{angeltveit2010topological}] \label{prop:diffinell}
  The differentials in \eqref{ss:l} are given by the formulas:
  \begin{equation*}
    d^{p^{n+1}+\dots +p}(p^{n}\cdot v_0\mu_{(k+1)p^{n+2}}) = k v_1^{p^{n+1}+\dots+p}\sigma v_1\mu_{kp^{n+2}},\; k\geq 0,\; n\geq 0
  \end{equation*}
  up to a unit and the differentials are $v_1$-linear.
\end{theorem}

There are extensions at the end of this spectral sequence. We now state the result with our notations:
\begin{theorem}[sections 6.2 and 6.3 of \cite{angeltveit2010topological}]
  $\THH_*(\ell)$ is a quotient of the $\Z_{(p)}[v_1]$-module
  \begin{multline}
    \Z_{(p)}[v_1]\{1,\,\sigma v_1,\, v_0^n\mu_{p^{n+1}},\,n\geq 0\} \\
    \oplus \Z_{(p)}[v_1]\{v_0^h\sigma v_1\mu_{ap^n},\, n\geq 2,\, a\geq 1,\, \mbox{$a$ not divisible by $p$},\, h \geq 0\}
  \end{multline}
  by the relations in the non-torsion part:
  \begin{itemize}
  \item $p\cdot \mu_p = \sigma v_1$,
  \item $p\cdot v_0^n\mu_{p^{n+1}} = v_1^{p^n}v_0^{n-1}\mu_{p^n}$ for any $n\geq 1$,
  \end{itemize}
  and the relations in the torsion part:
  \begin{itemize}
  \item $v_0^{h} \sigma v_1 \mu_{ap^n} = 0$ for any $a \geq 1$ and $n \geq 2$, $a$ not divisible by $p$, and $h \geq n -1$,
  \item $v_1^{p^{n-h-1}+p^{n-h-2}+\dots+p }\cdot v_0^h\sigma v_1 \mu_{ap^n} = 0$ for any $a \geq 1$ and $n \geq 2$, $a$ not divisible by $p$ and  $0 \leq h \leq n - 2$,
  \item $p\cdot \sigma v_1 \mu_{(bp+p-1)p^n} = v_0\sigma v_1\mu_{(bp+p-1)p^n}+v_1^{p^n}v_0^{\nu(b)}\sigma v_1 \mu_{bp^{n+1}}$ for any $b\geq 1$ and $n \geq 2$,
  \item $p\cdot v_0^h\sigma v_1 \mu_{ap^n} = v_0^{h+1}\sigma v_1 \mu_{ap^n}$ for any $a \geq 1$, $n \geq 2$, $a$ not divisible by $p$, and any $1 \leq h \leq n - 2 $, or $h=0$ not in the previous case.
  \end{itemize}
  
  The degrees are:
  \begin{equation*}
    \begin{aligned}
      & |\mu_{kp}| = 2kp - 1 \\
      & |\sigma v_1| = 2p - 1 \\
      & |v_0| = 0 \\
      & |v_1| = 2(p-1)
    \end{aligned}
  \end{equation*}
  and $\nu$ is the $p$-adic valuation.
\end{theorem}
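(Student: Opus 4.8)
The plan is to read the statement off the Bockstein spectral sequence \eqref{ss:l}, whose $E^1$-page is $\THH_*(\ell;H\Z_{(p)})\botimes P(v_1)$ --- with $\THH_*(\ell;H\Z_{(p)})$ the explicit $\Z_{(p)}$-module recalled just above --- and whose differentials are given by \cref{prop:diffinell}. The argument has two stages: first run all the differentials to obtain the $E^\infty$-page, i.e.\ the associated graded of $\THH_*(\ell)$ for the $v_1$-Bockstein filtration; then resolve the $p$-extensions and hidden $v_1$-multiplications that reassemble $\THH_*(\ell)$ from that associated graded. Stage one is essentially bookkeeping; stage two is where the content sits.

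For stage one I would run the differentials of \cref{prop:diffinell} in order of increasing $n\ge 0$, each family $d^{p^{n+1}+\dots+p}$ trading a stage of the $v_0$-tower over some $\mu_{(k+1)p^{n+1}}$ for a $v_1$-truncated copy of $v_0^{j}\sigma v_1\mu_{ap^{n+1+j}}$ (writing $k=p^{j}a$ with $p\nmid a$, and using the extension $p\cdot\mu_p=\sigma v_1$ from \eqref{ss:lZ}, carried into \eqref{ss:l}, to name the relevant classes). Across the families one records, for each additive generator of $\THH_*(\ell;H\Z_{(p)})\botimes P(v_1)$, whether it survives, is spent as a differential source, or is a target. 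The outcome should be that $E^\infty$ is the associated graded of the $\Z_{(p)}[v_1]$-module in the statement: the classes contributing to the non-torsion part are $1$, $\sigma v_1$, and the $v_0^{n}\mu_{p^{n+1}}$ with their $v_1$-multiples; the $v_1$-power-torsion part is spanned by the $v_0^{h}\sigma v_1\mu_{ap^{n}}$; and the vanishing $v_0^{h}\sigma v_1\mu_{ap^{n}}=0$ for $h\ge n-1$, together with the $v_1$-height bounds, are read off from the orders of the classes already present in $\THH_*(\ell;H\Z_{(p)})$ and from which $v_1$-powers get hit.

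Stage two, the extensions, is the hard part. The non-torsion relations $p\cdot\mu_p=\sigma v_1$ and $p\cdot v_0^{n}\mu_{p^{n+1}}=v_1^{p^{n}}v_0^{n-1}\mu_{p^{n}}$ record that classes vanishing modulo $v_1$ (for instance $p\cdot v_0^{n}\mu_{p^{n+1}}$) are in fact nonzero $v_1$-multiples of lower classes; I would pin these down by comparison with the periodic computation. Inverting $v_1$ is a smashing localization, so \cref{prop:localizationthh} identifies the localization of $\THH_*(\ell)$ at $v_1$ with $\THH_*(L)$, and the splitting $\THH(L)_{p}\simeq (L\vee\Sigma L_\Q)_{p}$ forces the $\Z_{(p)}[v_1]$-module structure on the non-torsion part: the $L$-summand supplies $\Z_{(p)}[v_1]\{1\}$, and the $\Sigma L_\Q$-summand supplies a $\Z_{(p)}[v_1]$-lattice inside $\Q[v_1^{\pm 1}]\{\sigma v_1\}$ generated by $\sigma v_1$ and the $v_0^{n}\mu_{p^{n+1}}$ and cut out by exactly the displayed $p$-divisibility chain. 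For the torsion extensions (the last four bullets), invisible after inverting $v_1$, I would argue from the module structure of \eqref{ss:l} over $\THH_*(\ell)$ together with internal-degree and order counting: each asserted relation equates a class with one of strictly larger Bockstein filtration in the same internal degree, the $E^\infty$-page restricts the candidates in that degree to at most the claimed one, and the precise coefficients --- for instance the summand $v_1^{p^{n}+\dots+p}v_0^{\nu(b)}\sigma v_1\mu_{bp^{n+1}}$ in $p\cdot\sigma v_1\mu_{(bp+p-1)p^{n}}$ --- are then forced by compatibility with multiplication by $v_1$ and with the differential formula of \cref{prop:diffinell}.

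The step I expect to be the main obstacle is making stage two coherent rather than merely plausible: the torsion part involves several interacting families indexed by $a$, $n$, $h$, and $b$, and one must check that the asserted $p$- and $v_1$-extensions are mutually consistent and jointly exhaust all hidden extensions. This is exactly the analysis carried out in sections 6.2 and 6.3 of \cite{angeltveit2010topological}; the present task is to transcribe it into the $\mu_{kp}$, $v_0$, $\sigma v_1$, $v_1$ notation fixed in \cref{section:thhl}, checking along the way that the conventions $v_0^{0}\sigma v_1\mu_N=\sigma v_1\mu_N$ and $\sigma v_1\mu_0=\sigma v_1$ line up with the edge cases $h=0$ and $N=p^{n+1}$ of the relations.
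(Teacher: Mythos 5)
The paper itself offers no proof of this statement: it is quoted verbatim (up to the change of notation fixed in \cref{section:thhl}) from sections 6.2 and 6.3 of \cite{angeltveit2010topological}, so the only "proof" in the paper is the citation. Your outline --- run the $v_1$-Bockstein spectral sequence \eqref{ss:l} with the differentials of \cref{prop:diffinell}, then resolve the $p$- and $v_1$-extensions, deferring the detailed extension analysis to Angeltveit--Hill--Lawson --- is exactly the route of the cited source and hence essentially the same treatment as the paper's, so it is fine as it stands.
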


In order to lift this computation to the Bockstein spectral sequence \eqref{ss:u}, computing $\THH_*(ku)$, one must compare the sequences by means other than the map induced by the inclusion $\ell\rightarrow ku$, since $\sigma v_1\in \THH_{2p-1}(\ell;H\Z_p)$ should be compared to $u^{p-2}\sigma u$, which is not a class in $\THH_{2p-1}(ku;H\Z_p)$. A solution is to consider the cofiber of the multiplication by $v_1$:
\begin{center}
  \begin{tikzcd}
    \Sigma^{2p-1}ku \ar[r, "v_1"] & ku \ar[r] & ku/v_1.
  \end{tikzcd}
\end{center}
This is done in Section~\ref{chap:thhkucomplet}. Section~\ref{chap:truncated} develops the correspondence used to compare the two Bockstein spectral sequences computing $\THH_*(ku)$, the first associated with $u$ and the second with $v_1$.

\section{Spectral sequences from towers of spectra}
\label{chap:truncated}

Our vocabulary concerning spectral sequences will follow Boardman's in \cite{boardman1999conditionally}. We will work in the homotopy category of a category of spectra. The underlying category of spectra could be Boardman's spectra (see \cite{adams1974stable} or \cite{switzer2017algebraic}), or $S$-module from \cite{elmendorf1997rings}. What we really use is that we have a triangulated category, with a homotopy functor to the category of graded groups that produces long exact sequences from the triangles, with some uniqueness on the maps between two triangles (arising from the uniqueness up to homotopy of the maps between cofiber sequences).

We study spectral sequences arising from a tower of spectra indexed by $\Z$:
\begin{center}
  \begin{tikzcd}
    ... \arrow[r] & Y_{n+1} \arrow[r] & Y_{n} \arrow[r] & Y_{n-1} \arrow[r] & ...
  \end{tikzcd}
\end{center}
Let $Y_\infty$ be the limit of the tower and $Y_{-\infty}$ be the colimit. For any $a$ and $b$ integers or $\pm\infty$ with $a\leq b$, let $Y_a^b$ be the cofiber of the map $Y_b\rightarrow Y_a$. For each $n\in\Z$, the cofiber sequence:
\begin{center}
  \begin{tikzcd}
    Y_{n+1} \arrow[r] & Y_{n} \arrow[r] & Y_n^{n+1}
  \end{tikzcd}
\end{center}
gives a long exact sequence in homotopy. Pasting each of these sequences defines an unrolled exact couple, and a spectral sequence.

To allow (weak) convergence of the spectral sequence, we quotient the tower of spectra by the limit. Note that having a zero limit is a necessary condition for convergence, but not a sufficient one. To study this quotient, we need to discuss the maps between these cofibers.

\subsection{The octahedral axiom and consequences}

The octahedral axiom is assumed true in any triangulated category. Here we will use it in the homotopy category of spectra, which is triangulated by virtue of being the homotopy category of a stable model category.

\begin{axiom}[Octahedral]
  Let $A\rightarrow B\rightarrow C$, $A\rightarrow D\rightarrow E$ and $B\rightarrow D\rightarrow F$ be triangles such that the diagram
  \begin{center}
    \begin{tikzcd}
      A \rar \dar["id"] & B \dar \\
      A \rar & D \\
    \end{tikzcd}
  \end{center}
  commutes. Then there are six triangles and a commutative diagram:
  \begin{center}
    \begin{tikzcd}
      A \rar \dar["id"] & B \dar \rar & C \dar \\
      A \rar \dar & D \rar \dar & E \dar \\
      * \rar & F \rar["id"] & F \\
    \end{tikzcd}
  \end{center}
  where $*$ is the zero-object of the category.
\end{axiom}
Note that in the specific case of the stable homotopy category, the maps $C\rightarrow E$ and $E\rightarrow F$ are unique, and thus are unique up to homotopy in the category of spectra.

Our first lemma is a reformulation of this axiom with our notation.
\begin{lemma}\label{prop:cofiber1}
  Let $a\leq b \leq c$ be integers or $\pm\infty$. There is a morphism of cofiber sequences, and a commutative diagram:
  \begin{center}
    \begin{tikzcd}
      Y_c \rar \dar["id"] & Y_b \rar \dar & Y_b^c \dar \\
      Y_c \rar & Y_a \rar & Y_a^c \\
    \end{tikzcd}
  \end{center}
   There is a cofiber sequence:
  \begin{center}
    \begin{tikzcd}
      Y_b^c \arrow[r] & Y_a^c \arrow[r] & Y_a^b
    \end{tikzcd}
  \end{center}
  and a weak equivalence $f: Y_a^b \rightarrow Y_a^b$ making the following diagram commute.
  \begin{center}
    \begin{tikzcd}
      Y_b \rar \dar & Y^c_b \dar \\
      Y_a \rar \dar & Y^c_a \dar \\
      Y^b_a \rar["f"]  & Y^b_a 
    \end{tikzcd}
  \end{center}
\end{lemma}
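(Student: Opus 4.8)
The plan is to read the entire statement off the octahedral axiom, applied with $A = Y_c$, $B = Y_b$ and $D = Y_a$. First I would verify the hypothesis: the square with top row the tower map $A \to B$ (i.e. $Y_c \to Y_b$), bottom row the tower map $A \to D$ (i.e. $Y_c \to Y_a$), left edge $\mathrm{id}_A$ and right edge the tower map $B \to D$ (i.e. $Y_b \to Y_a$) commutes, because by construction of the tower the map $Y_c \to Y_a$ is the composite $Y_c \to Y_b \to Y_a$. I then take as the three input triangles
\[ Y_c \to Y_b \to Y_b^c, \qquad Y_c \to Y_a \to Y_a^c, \qquad Y_b \to Y_a \to Y_a^b, \]
which exist because $Y_b^c$, $Y_a^c$ and $Y_a^b$ are by definition the cofibers of the first arrows. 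The octahedral axiom then hands back maps $Y_b^c \to Y_a^c$ and $Y_a^c \to Y_a^b$ together with a commutative $3\times 3$ diagram in which every row and column is a triangle: the first two rows are the first two input triangles, the third row is $\ast \to Y_a^b \xrightarrow{\mathrm{id}} Y_a^b$, the middle column is the triangle $Y_b \to Y_a \to Y_a^b$, and the right-hand column is a triangle $Y_b^c \to Y_a^c \to Y_a^b$.

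Each assertion of the lemma is then a piece of this diagram. The top two rows, with the vertical maps $Y_b \to Y_a$ and $Y_b^c \to Y_a^c$, are the morphism of cofiber sequences of the first assertion. The right-hand column is the cofiber sequence $Y_b^c \to Y_a^c \to Y_a^b$ of the second assertion. The two right-hand columns together with the squares linking them are exactly the commutative diagram of the third assertion, with $f$ the identity map $Y_a^b \xrightarrow{\mathrm{id}} Y_a^b$.

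The only point that deserves care is the weak equivalence $f$. The statement is phrased so as to allow $Y_a^b$ to be a fixed, independently chosen model of the cofiber of $Y_b \to Y_a$ (for instance the functorial cofiber in $\M_R$), whereas the octahedral axiom produces a possibly different cone of that map in the third output column; the two differ by a comparison map, which is a weak equivalence since any two cofibers of the same map are weakly equivalent. Using the uniqueness up to homotopy of maps between cofiber sequences in the stable homotopy category (the structural property the paper insists on), this comparison can be chosen to make the bottom square of the third diagram commute, and that is the map $f$. Reconciling the octahedral output with the maps already named in the tower is the only step demanding attention; everything else is a transcription of the axiom. Finally, since the argument only uses the existence of the objects $Y_{\pm\infty}$, $Y_a^b$, $Y_b^c$, $Y_a^c$ and the maps among them, it applies verbatim when $a$, $b$ or $c$ is $\pm\infty$.
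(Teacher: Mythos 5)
Your proposal is exactly the paper's argument: the paper offers no separate proof, presenting the lemma as a direct reformulation of the octahedral axiom with $A=Y_c$, $B=Y_b$, $D=Y_a$ and the three cofiber triangles as input, which is precisely your application. Your extra care in justifying the weak equivalence $f$ via the comparison of cones and the uniqueness up to homotopy of maps between cofiber sequences is consistent with (indeed spells out) the remark the paper makes right after stating the axiom, so the proposal is correct.
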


We can conclude the following, which ensures that our spectral sequences can converge to the homotopy groups of their colimit.
\begin{proposition}\label{prop:quotientlimitss}
  For any integers $a\leq b$, the cofiber of $Y_b^{\infty} \rightarrow Y_a^\infty$ is $Y_a^b$.
  Moreover, the two towers of spectra
  \begin{center}
    \begin{tikzcd}
      ... \arrow[r] & Y_{n+1}^\infty \arrow[r] & Y_{n}^\infty \arrow[r] & Y_{n-1}^\infty \arrow[r] & ...
    \end{tikzcd}
  \end{center}
  \begin{center}
    \begin{tikzcd}
      ... \arrow[r] & Y_{n+1} \arrow[r] & Y_{n} \arrow[r] & Y_{n-1} \arrow[r] & ...
    \end{tikzcd}
  \end{center}
  induce isomorphic spectral sequences, beginning from the $E^1$ pages.
\end{proposition}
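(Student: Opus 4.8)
The plan is to derive both parts directly from \cref{prop:cofiber1} together with the standard comparison machinery for exact couples; no computation is involved, only homotopical bookkeeping.

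For the cofiber claim, I would simply take $c=\infty$ in \cref{prop:cofiber1}. That lemma then provides a cofiber sequence $Y_b^\infty \to Y_a^\infty \to Y_a^b$, exhibiting $Y_a^b$ as the cofiber of the natural map $Y_b^\infty\to Y_a^\infty$, and the accompanying commutative diagram (with the weak equivalence $f\colon Y_a^b\to Y_a^b$) records the compatibility of this identification with the maps out of $Y_b$ and $Y_a$.

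For the comparison of spectral sequences, I would first build a map of towers from $(Y_n)$ to $(Y_n^\infty)$. For each $n$ there is the natural map $Y_n\to Y_n^\infty$ (from $Y_n$ to the cofiber of $Y_\infty\to Y_n$); applying \cref{prop:cofiber1} with $(a,b,c)=(n,n+1,\infty)$ gives a morphism of cofiber sequences whose middle square is
\begin{equation*}
  \begin{tikzcd}
    Y_{n+1} \rar \dar & Y_n \dar \\
    Y_{n+1}^\infty \rar & Y_n^\infty,
  \end{tikzcd}
\end{equation*}
so these maps commute with the tower maps and assemble into a map of towers. Passing to homotopy groups of the defining cofiber sequences $Y_{n+1}\to Y_n\to Y_n^{n+1}$ and, on the other side, $Y_{n+1}^\infty\to Y_n^\infty\to Y_n^{n+1}$ (the latter being the cofiber sequence of the first part, with $(a,b)=(n,n+1)$), this map of towers induces a map of the two unrolled exact couples, hence a map of the associated spectral sequences, starting at the $E^1$ pages. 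On $E^1$ this map is the map on homotopy groups induced on third terms by the comparison of the two cofiber sequences above; by the last diagram of \cref{prop:cofiber1} that comparison is the weak equivalence $f\colon Y_n^{n+1}\to Y_n^{n+1}$, so the induced map $E^1\to E^1$ is an isomorphism. Finally, a map of exact couples that is an isomorphism on the $E$-term is an isomorphism on the $E$-term of the derived couple — it commutes with $d=jk$, hence with the kernel and image of $d$ — and inductively on every later page; therefore the two spectral sequences are isomorphic from $E^1$ onward.

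The only delicate point is the homotopical bookkeeping: one must check that the various naturality squares commute on the nose, so that one genuinely obtains a map of exact couples rather than merely a compatible family of maps up to unspecified homotopy. This is handled exactly as in the proof of \cref{prop:cofiber1}, via the octahedral axiom and the uniqueness (up to homotopy) of the relevant maps of cofiber sequences in the stable homotopy category, so I do not expect any genuine obstacle beyond this diagram chase.
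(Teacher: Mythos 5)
Your proposal is correct and follows essentially the same route as the paper: both deduce the cofiber identification and the comparison from \cref{prop:cofiber1}, obtaining a morphism of towers $Y_n\to Y_n^\infty$ whose induced maps on the cofibers $Y_n^{n+1}$ are weak equivalences, hence a morphism of exact couples that is an isomorphism on $E^1$ and therefore on every subsequent page. Your extra remarks on the homotopical bookkeeping only spell out what the paper's appeal to uniqueness up to homotopy already covers.
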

\begin{proof}
  This follows from Lemma~\ref{prop:cofiber1}: we have a morphism of exact couple induced by the diagrams
  \begin{center}
    \begin{tikzcd}
      Y_{n+1} \rar \dar & Y_n \rar \dar & Y_n^{n+1} \dar["\simeq"] \\
      Y_{n+1}^\infty \rar & Y_n^\infty \rar & Y_n^{n+1} \\
    \end{tikzcd}
  \end{center}
  that is an isomorphism on the $E^1$ pages. The induced morphisms on the derived exact couples are then automatically isomorphisms on the following pages, and thus we have two isomorphic spectral sequences.
\end{proof}

This proposition will be used with towers of spectra such that for some $m\in\Z$ and for all $k\geq m$, the maps $Y_{k+1}\rightarrow Y_{k}$ are isomorphisms -- that is, $Y_m$ is the limit of the tower; and thus $\infty$ will be replaced by $m$. In fact, we will mostly deal with towers quotiented by their limits, and we will need another version of the octahedral axiom.

In the following, whenever $i\leq j \leq k$ are integers or $\pm\infty$, the map $Y_j^k\rightarrow Y_i^k$ is the map coming from the morphism between the cofiber sequences $Y_k\rightarrow Y_j \rightarrow Y_j^k$ and $Y_k\rightarrow Y_i \rightarrow Y_i^k$, and the map $Y_i^k \rightarrow Y_i^j$ is from the cofiber sequence $Y_j^k\rightarrow Y_i^k \rightarrow Y_i^j$ of Lemma~\ref{prop:cofiber1}. Both are unique up to homotopy.

\begin{lemma}\label{prop:cofiber2}
  Let $a\leq b \leq c \leq d$ be integers or $\pm\infty$. There are commutative diagrams, both containing six cofiber sequences:
  \begin{center}
    \begin{tikzcd}
      Y_c^d \rar \dar["id"] & Y_b^d \rar \dar & Y_b^c \dar \\
      Y_c^d \rar \dar & Y_a^d \rar \dar & Y_a^c \dar \\
      * \rar & Y_a^b \rar["\simeq"] & Y_a^b \\
    \end{tikzcd}
    \hspace{2cm}
    \begin{tikzcd}
      Y_c^d \rar \dar & Y_a^d \rar \dar["id"] & Y_a^c \dar \\
      Y_b^d \rar \dar & Y_a^d \rar \dar & Y_a^b \dar \\
      Y_b^c\rar & * \rar & \Sigma Y_b^c \\
    \end{tikzcd}
  \end{center}
\end{lemma}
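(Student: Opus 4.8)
The plan is to obtain both diagrams from the Octahedral Axiom recalled above, applied to the composable pair of maps $Y_c^d\to Y_b^d\to Y_a^d$, after assembling the relevant cofiber sequences with \cref{prop:cofiber1}. Concretely, I would first apply \cref{prop:cofiber1} to the triples $b\le c\le d$, $a\le c\le d$ and $a\le b\le d$ to produce the three cofiber sequences $Y_c^d\to Y_b^d\to Y_b^c$, $Y_c^d\to Y_a^d\to Y_a^c$ and $Y_b^d\to Y_a^d\to Y_a^b$, together with the maps $Y_c^d\to Y_b^d$, $Y_c^d\to Y_a^d$, $Y_b^d\to Y_a^d$ and the weak equivalences identifying the cofibers with $Y_b^c$, $Y_a^c$, $Y_a^b$. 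Since \cref{prop:cofiber1} is already stated for indices that are integers or $\pm\infty$, the infinite cases need no separate treatment, and \cref{prop:quotientlimitss} shows that passing to the towers quotiented by their limits changes nothing.

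Next I would verify the one commutativity that is not tautological, namely that the composite $Y_c^d\to Y_b^d\to Y_a^d$ coincides with the map $Y_c^d\to Y_a^d$ of \cref{prop:cofiber1}: both are morphisms of cofiber sequences from $Y_d\to Y_c\to Y_c^d$ to $Y_d\to Y_a\to Y_a^d$ lying over $\mathrm{id}_{Y_d}$ and the structure map $Y_c\to Y_a$, hence agree by the uniqueness of such morphisms in the homotopy category of spectra. With $A=Y_c^d$, $B=Y_b^d$, $C=Y_b^c$, $D=Y_a^d$, $E=Y_a^c$, $F=Y_a^b$ the hypotheses of the Octahedral Axiom are then met, and its output is precisely the first $3\times3$ diagram; the bottom horizontal map $Y_a^b\to Y_a^b$ is only a weak equivalence, not an identity, because it factors the identity produced by the axiom through the identification of $Y_a^b$ with the cofiber of $Y_b^c\to Y_a^c$ furnished by \cref{prop:cofiber1}.

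The second diagram encodes the same octahedron, rotated by one step: its rows are the triangles $Y_c^d\to Y_a^d\to Y_a^c$, $Y_b^d\to Y_a^d\to Y_a^b$ and $Y_b^c\to *\to\Sigma Y_b^c$, its first column is $Y_c^d\to Y_b^d\to Y_b^c$, its middle column is the trivial triangle on $Y_a^d$, and its last column is the rotation $Y_a^c\to Y_a^b\to\Sigma Y_b^c$ of the octahedron triangle $Y_b^c\to Y_a^c\to Y_a^b\to\Sigma Y_b^c$ already produced in the first diagram — the suspension in the lower-right corner being forced by that connecting map. One obtains it either by re-reading the octahedron data of the previous step or by applying the Octahedral Axiom once more after rotating the triangles involved, and commutativity of its nine squares is inherited. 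I do not expect any serious difficulty here: the work is entirely organisational, namely keeping the maps furnished by \cref{prop:cofiber1} coherent across the nine nodes of each diagram (so that, for instance, \emph{the} map $Y_b^d\to Y_a^d$ means the same thing wherever it occurs), which reduces throughout to the uniqueness of maps between cofiber sequences, together with arranging the statement so that the $\pm\infty$ cases are handled uniformly.
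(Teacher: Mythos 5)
Your proposal is correct and follows essentially the same route as the paper: the left diagram is the octahedral axiom applied to the three cofiber sequences supplied by \cref{prop:cofiber1}, the right diagram is the same octahedron after a one-step horizontal shift using $\Sigma$, and all maps are pinned down by the uniqueness up to homotopy of maps between cofiber sequences. Your explicit verification that the composite $Y_c^d\to Y_b^d\to Y_a^d$ agrees with the canonical map $Y_c^d\to Y_a^d$ is a detail the paper leaves implicit, but it does not change the approach.
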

\begin{proof}
  The left one follows directly from the octahedral axiom. The right one must be shifted once in the horizontal direction using $\Sigma$ to have the same form as the octahedral axiom. The maps can be seen to be the canonical ones since they are unique up to homotopy.
\end{proof}

\subsection{Truncated and gathered spectral sequences}

For any spectrum $\Gamma$, write $\Gamma_*=\pi_*(\Gamma)$ for its homotopy groups. The tower
\begin{center}
  \begin{tikzcd}
    ... \arrow[r] & Y_{n+1}^\infty \arrow[r] & Y_{n}^\infty \arrow[r] & Y_{n-1}^\infty \arrow[r] & ...
  \end{tikzcd}
\end{center}
gives a spectral sequence of the form
\begin{equation*}
  (\mathcal{B}) : E^1 = \bigoplus_{n\in\Z} (Y_n^{n+1})_* \Rightarrow (Y_{-\infty}^\infty)_*.
\end{equation*}

In the cases that are of interest to us, this spectral sequence will be strongly convergent by Theorem 6.1 of \cite{boardman1999conditionally}.
\begin{proposition} \label{prop:convergencessnicecase}
  If for all $n\in\Z$, the spectra $Y^\infty_n$ are connective, then $(\mathcal{B})$ is a half-plane spectral sequence with exiting differentials (in the sense of II.6 of \cite{boardman1999conditionally}); if, moreover, its limit $\lim_{n\in\Z}(Y^\infty_n)_*$ is zero, then it is strongly convergent toward its colimit $(Y^\infty_{-\infty})_*$.
\end{proposition}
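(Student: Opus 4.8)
The plan is to reduce the statement to Boardman's convergence machinery, namely theorem~6.1 of \cite{boardman1999conditionally} together with the passage from conditional to strong convergence. Concretely, two things are to be checked: that $(\mathcal{B})$ is a half-plane spectral sequence with exiting differentials, and that it converges conditionally to $(Y_{-\infty}^\infty)_*$; Boardman's theorem then upgrades conditional convergence to strong convergence.

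First I would treat the half-plane condition, which is the only part that uses nothing beyond connectivity. By \cref{prop:quotientlimitss}, the $E^1$-term $(Y_n^{n+1})_*$ in filtration $n$ is the homotopy of the cofiber of $Y_{n+1}^\infty\to Y_n^\infty$; as this is a map between connective spectra, its cofiber is connective, so $(Y_n^{n+1})_m=0$ for $m<0$. Hence the whole $E^1$-page is concentrated in non-negative total degree, which is the half-plane condition. For the exiting-differentials property I would unwind the bidegree of the differentials of the unrolled exact couple of $\{Y_n^\infty\}$: $d^r$ raises the filtration index by $r$ and lowers the total degree by exactly $1$. Combined with the vanishing region just found and with the boundedness available in our setting — either the limit of the tower is attained at a finite stage, so that $Y_n^{n+1}\simeq\ast$ for all $n$ above some threshold, or the $Y_n^\infty$ become increasingly connected, so that in each fixed total degree only finitely many filtrations contribute — every outgoing differential from a fixed spot eventually lands in the vanishing region, which is Boardman's ``exiting'' condition. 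This proves the first assertion.

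Next I would address conditional convergence, assuming now $\lim_n(Y_n^\infty)_*=0$. The tower $\{Y_n^\infty\}_{n\in\Z}$ has colimit $Y_{-\infty}^\infty$, and by the general theory of \cite{boardman1999conditionally} the spectral sequence $(\mathcal{B})$ converges conditionally to $(Y_{-\infty}^\infty)_*$ as soon as the inverse limit of the tower $\{(Y_n^\infty)_*\}_n$ vanishes, the potential $\lim^1$-contribution being killed by the boundedness coming from the half-plane/exiting structure. Granting the hypothesis, conditional convergence holds. Since a half-plane spectral sequence with exiting differentials has vanishing derived $E_\infty$-term, theorem~6.1 of \cite{boardman1999conditionally} then yields strong convergence toward $(Y_{-\infty}^\infty)_*$, which is the claim.

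The main obstacle is the exiting-differentials property rather than anything about convergence per se: connectivity of the $Y_n^\infty$ by itself only produces a half-plane bounded by a line parallel to the asymptotic direction of the differentials, which is the degenerate case, so one genuinely needs the extra boundedness described above (coming from the fact that in our applications the towers have been quotiented by their limits, following the discussion after \cref{prop:quotientlimitss}). The secondary point to pin down is that Boardman's notion of conditional convergence to the colimit really does reduce here to the single hypothesis $\lim_n(Y_n^\infty)_*=0$, i.e.\ that no $\lim^1$-obstruction survives; and one should remark, if only in passing, that $(\mathcal{B})$ is exactly the spectral sequence of the unrolled exact couple of $\{Y_n^\infty\}$, so that Boardman's results apply to it verbatim.
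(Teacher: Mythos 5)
Your route is the paper's route: the proposition is given there with no argument beyond the sentence invoking Theorem~6.1 of \cite{boardman1999conditionally}, and your proof is exactly an unwinding of that citation (identify $(\mathcal{B})$ with the spectral sequence of the unrolled exact couple of $\{Y_n^\infty\}$, via \cref{prop:quotientlimitss}; check the half-plane and exiting conditions; check conditional convergence; conclude strong convergence). Your reading of the convergence step is also right: once the differentials are exiting, the tower $(Y_n^\infty)_*$ is degreewise eventually constant, so $\lim^1$ vanishes for free and the single hypothesis $\lim_n(Y_n^\infty)_*=0$ is precisely conditional convergence to the colimit, which Theorem~6.1 upgrades to strong convergence.

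The substantive issue is the one you flag yourself, and it is a genuine one rather than a technicality: connectivity of all the $Y_n^\infty$ only gives vanishing of $E^1$ in negative total degree, i.e.\ a half-plane whose boundary is parallel to the limiting direction of the differentials (each $d^r$ raises filtration by $r$ but lowers total degree by exactly $1$), and for convergence purposes Boardman treats that configuration like a whole-plane spectral sequence, where conditional convergence alone does not suffice and an additional vanishing hypothesis is needed; Theorem~6.1 does not apply verbatim to it. What makes the differentials genuinely exit in every use the paper makes of the proposition is that the connectivity of $Y_n^\infty$ grows with $n$: the Bockstein towers $\Sigma^{kn}\THH(A;M)$ with $n\geq 1$, and the truncated and gathered towers of \cref{chap:truncated}, which are even constant outside a finite range. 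Then in each total degree only finitely many filtrations carry nonzero groups, every sufficiently long differential out of a fixed spot is zero, and the argument closes as you describe. So your ``extra boundedness available in our setting'' is the correct mathematical content, but it is imported from the applications rather than deduced from the stated hypothesis; to have a complete proof you should either strengthen the hypothesis to increasing connectivity (which is exactly what is verified each time the proposition is invoked, e.g.\ for the $m$-Bockstein tower immediately after its statement) or supply a separate argument for the exiting property, since uniform connectivity by itself leaves you in the whole-plane situation.
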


For any integers $a\leq b$, we can truncate the tower at $a$ and $b$, and thus the spectral sequence $(\mathcal{B})$.
Let $X$ be the tower such that:
\begin{equation*}
  X_n=
  \begin{cases}
      Y_b^\infty \mbox{ if } n\geq b \\
      Y_a^\infty \mbox{ if } n\leq a \\
      Y_n^\infty \mbox{ otherwise}
    \end{cases}
  \end{equation*}
with identity maps when necessary and maps induced by the original tower. This defines a truncated spectral sequence:
\begin{equation*}
  (\mathcal{T}_a^b) : E^1 = \bigoplus_{a\leq n < b} (Y_n^{n+1})_* \Rightarrow (Y_{a}^b)_*.
\end{equation*}
Note that the tower obtained by quotienting by the limit has components:
\begin{equation*}
  X'_n=
  \begin{cases}
    Y_b^b\simeq * & \mbox{ if } n\geq b \\
    Y_a^b & \mbox{ if } n\leq a \\
    Y_n^b &  \mbox{ otherwise.}
  \end{cases}
\end{equation*}

For any strictly increasing map $\phi:\Z\rightarrow\Z$, consider the tower whose $n$-th level is $Y_{\phi(n)}^\infty$ and whose maps are the composition of corresponding maps in the original tower. This defines what we call a gathered spectral sequence:
\begin{equation*}
  ({}^\phi \mathcal{B}) : E^1 = \bigoplus_{n\in\Z} (Y_{\phi(n)}^{\phi(n+1)})_* \Rightarrow (Y_{-\infty}^\infty)_*.
\end{equation*}

If $\phi$ is the multiplication by 2, the pages of $({}^\phi \mathcal{B})$ are gathered two-by-two; the first differential $d^1$ of $({}^\phi \mathcal{B})$ contains information about the $d^2$ and $d^3$ of $(\mathcal{B})$, the second about $d^4$ and $d^5$, etc.

The application we have in mind is comparing the Bockstein spectral sequences associated to the multiplication by $u$ and $v_1$ that compute $\THH_*(ku)$:
\begin{equation*}
  \begin{gathered}
    \eqref{ss:u}: \quad \THH_*(ku; H\Z_{(p)}) \otimes P(u) \Rightarrow \THH_*(ku) \\
    \eqref{ss:v1}: \quad \THH_*(ku; ku/v_1) \otimes P(v_1) \Rightarrow \THH_*(ku)
  \end{gathered}
\end{equation*}
Here, \eqref{ss:v1} is a gathered spectral sequence obtained from \eqref{ss:u} with the map $\phi(n) = (p-1) n$ (since $v_1 = u^{p-1}$). We will use our results on gathered spectral sequences of Section~\ref{chap:thhkucomplet} to compare \eqref{ss:u} and \eqref{ss:v1}.

If one wants to compute $(Y_{-\infty}^\infty)_*$, this gives two ways to do it: computing $(\mathcal{B})$, or computing each $(Y_{\phi(n)}^{\phi(n+1)})_*$ by means of $(\mathcal{T}_{\phi(n)}^{\phi(n+1)})$ and thereafter computing $({}^\phi \mathcal{B})$. These two computations are not independent. Let us represent our spectral sequences graphically with the following grading: the $n$ in $(Y_n)_*$ (the filtration degree) is the $y$-coordinate, and the $x$-coordinate is such that $*=x+y$. With such a bidegree, the differentials will have $|d^r|=(-r-1,r)$ when we let the exact couple given by the tower of spectra be the $E^1$ page.  We will draw first quadrant spectral sequences, but our results apply to whole plane spectral sequences.

\begin{figure}
  \begin{tikzpicture}[scale=0.85]
    \tikzset{->/.style={-{Latex}}}
    \draw[step=1cm, gray, thin, dotted] (-0.2,-0.2) grid (12.2,6.2);
    \draw[->] (-0.2,0) -- (12.4,0) node[below] {x};
    \draw[->] (0,-0.2) -- (0,6.4) node[left] {y};
    \foreach \x in {0,...,12}
    {
      \foreach \y in {0,...,6}
      {
        \node (\x\y) at (\x,\y) {$\bullet$};
      }
    }
    \foreach \y in {0,...,6}
    {
      \pgfmathsetmacro\ypu{int(\y+1)}
      \node () at (-0.6,\y) {$(Y_\y^\ypu)_*$};
    }
    \draw[->] (41) --  node[above right] {$d^1$} (22);
    \draw[->] (103) --  node[above right] {$d^2$} (75);
    \draw[->] (80) --  node[above right] {$d^4$} (34);
  \end{tikzpicture}
  \caption{\label{fig:ss-B} Example of the spectral sequence $(\mathcal{B})$.}
\end{figure}
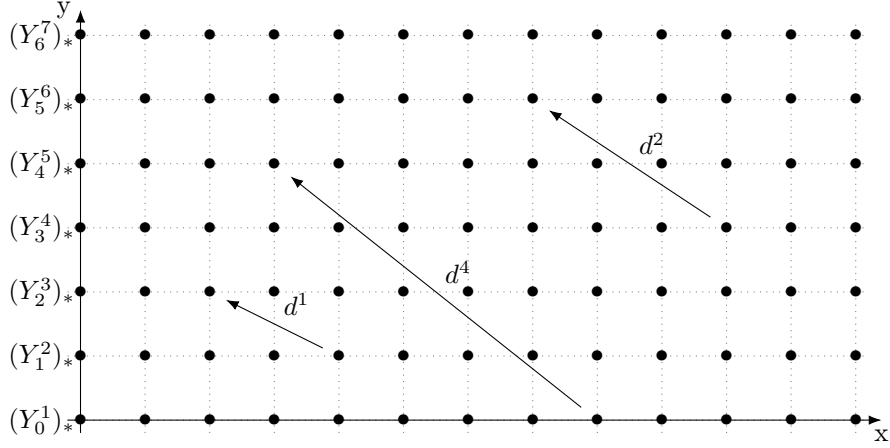

For each of figures~\ref{fig:ss-B}, \ref{fig:ss-T0}, \ref{fig:ss-T4} and \ref{fig:ss-Bphi}, a $\bullet$ represents a copy of a field $\F$ on the $E^1$-page, and the $\bullet^n$ in Figure~\ref{fig:ss-Bphi} represents $n$ copies of $\F$. On the Figure~\ref{fig:ss-B} we have shown 3 non-zero differentials of different size. We will choose our function $\phi:\Z \rightarrow \Z$ such that $\phi(0)=0$, $\phi(1)=3$ and $\phi(2)=7$. Our first result  is that the $d^1$ and $d^2$ shown will respectively be seen in $(\mathcal{T}_0^4)$ and $(\mathcal{T}_4^7)$, as seen in Figure~\ref{fig:ss-T0} and Figure~\ref{fig:ss-T4}. Conversely, having such differentials in $(\mathcal{T}_0^4)$ or $(\mathcal{T}_4^7)$ will give rise to a differential in $(\mathcal{B})$. This is the content of Theorem~\ref{prop:totalandsmall}.

\begin{figure}
  \begin{tikzpicture}[scale=0.85]
    \tikzset{->/.style={-{Latex}}}
    \draw[step=1cm, gray, thin, dotted] (-0.2,-0.2) grid (12.2,6.2);
    \draw[->] (-0.2,0) -- (12.4,0) node[below] {x};
    \draw[->] (0,-0.2) -- (0,6.4) node[left] {y};
    \foreach \x in {0,...,12}
    {
      \foreach \y in {0,...,2}
      {
        \node (\x\y) at (\x,\y) {$\bullet$};
      }
    }
    \foreach \y in {0,...,2}
    {
      \pgfmathsetmacro\ypu{int(\y+1)}
      \node () at (-0.6,\y) {$(Y_\y^\ypu)_*$};
    }
    \draw[->] (41) --  node[above right] {$d^1$} (22);
  \end{tikzpicture}
  \caption{\label{fig:ss-T0} The spectral sequence $(\mathcal{T}_0^3)$ corresponding to the $(\mathcal{B})$ of Figure~\ref{fig:ss-B}.}
\end{figure}

\begin{figure}
  \begin{tikzpicture}[scale=0.85]
    \tikzset{->/.style={-{Latex}}}
    \draw[step=1cm, gray, thin, dotted] (-0.2,-0.2) grid (12.2,6.2);
    \draw[->] (-0.2,0) -- (12.4,0) node[below] {x};
    \draw[->] (0,-0.2) -- (0,6.4) node[left] {y};
    \foreach \x in {0,...,12}
    {
      \foreach \y in {3,...,6}
      {
        \node (\x\y) at (\x,\y) {$\bullet$};
      }
    }
    \foreach \y in {3,...,6}
    {
      \pgfmathsetmacro\ypu{int(\y+1)}
      \node () at (-0.6,\y) {$(Y_\y^\ypu)_*$};
    }
    \draw[->] (103) --  node[above right] {$d^2$} (75);
  \end{tikzpicture}
  \caption{\label{fig:ss-T4} The spectral sequence $(\mathcal{T}_3^7)$ corresponding to the $(\mathcal{B})$ of Figure~\ref{fig:ss-B}.}
\end{figure}

However, the differential $d^4$ is too long and is ``jumping'' from the area covered by $(\mathcal{T}_0^3)$ to that covered by $(\mathcal{T}_3^7)$, and thus is not visible in either of the truncated spectral sequences. The $d^4$ differential will be visible in $({}^\phi \mathcal{B})$, as we will prove in Theorem~\ref{prop:totaltotruncated}; in Figure~\ref{fig:ss-Bphi}, we see that it gives a $d^1$ between the class in $(Y_0^3)_*$ represented by its source, and the class in $(Y_3^7)_*$ represented by its target.

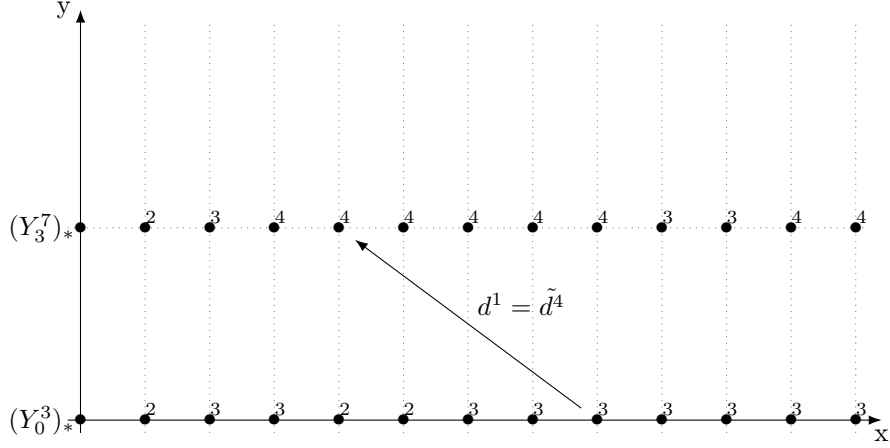
\begin{figure}
  \begin{tikzpicture}[scale=0.85]
    \tikzset{->/.style={-{Latex}}}
    \foreach \x in {1,...,12}
    {
      \draw[gray, thin, dotted] (\x,-0.2) to (\x, 6.2);
    }
    \draw[gray, thin, dotted] (-0.2,3) to (12.2,3);
    \draw[->] (-0.2,0) -- (12.4,0) node[below] {x};
    \draw[->] (0,-0.2) -- (0,6.4) node[left] {y};
    \foreach \x in {1,...,12}
    {
      \node (b\x) at (\x,0) {$\bullet$};
      \node () at (\x+.1,0.1) {\ifthenelse{\x=1 \OR \x=4 \OR \x=5}{$^2$}{$^3$}};
    }
    \foreach \x in {2,...,12}
    {
      \node (h\x) at (\x,3) {$\bullet$};
      \node () at (\x+.1,3.1) {\ifthenelse{\x=2 \OR \x=9 \OR \x=10}{$^3$}{$^4$}};
    }
    \node (b0) at (0,0) {$\bullet$};
    \node (h0) at (0,3) {$\bullet$};
    \node (h1) at (1,3) {$\bullet$};
    \node () at (1.1,3.1) {$^2$};
    \draw[->] (b8) --  node[above right] {$d^1=\widetilde{d^4}$} (h4);
    \node () at (-0.6,0) {$(Y_0^3)_*$};
    \node () at (-0.6,3) {$(Y_3^7)_*$};
  \end{tikzpicture}
  \caption{\label{fig:ss-Bphi} The spectral sequence $({}^\phi \mathcal{B})$ corresponding to the $(\mathcal{B})$ of Figure~\ref{fig:ss-B}.}
\end{figure}

Finally, Theorem~\ref{prop:truncatedtototal} deals with the case of transferring a differential of $({}^\phi \mathcal{B})$ into $(\mathcal{B})$, and Theorem~\ref{prop:nulldiffs} deals with the zero differentials in $(\mathcal{B})$ and $({}^\phi \mathcal{B})$.

Consider an unrolled exact couple:
\begin{center}
  \begin{tikzcd}[column sep = small]
    ... \ar[rr, "i"] & & A_{n+1} \ar[rr, "i"] & & A_n \ar[rr, "i"] \ar[dl, "j"] & & ... \\
     & & & E_n^1 \ar[ul, "k"] & & &
  \end{tikzcd}
\end{center}
For $r\geq 0$, let $Z_n^r$ and $B_n^r$ be the groups of $r$-cycles and of $r$-boundaries in $E_n^1$, that is:
\begin{equation*}
  \begin{gathered}
    Z_n^r = k^{-1}(Im(i^{r-1}:A_{n+r}\rightarrow A_{n+1})) \\
    B_n^r = j(Ker(i^{r-1}:A_n\rightarrow A_{n-r+1})).
  \end{gathered}
\end{equation*}
We let $E^r$ be the quotient $Z^{r}/B^{r}$ for $r\geq 1$, and the differential $d^r$ will be a map $E^r_n\rightarrow E^r_{n+r}$. We will write ${}^\phi Z_n^r$ and ${}^\phi B_n^r$ for the $r$-cycles and $r$-boundaries in the spectral sequence $({}^\phi \mathcal{B})$ to distinguish them from those in $(\mathcal{B})$.
\begin{definition}\label{prop:diffdiagram}
  For $x\in E^r_n$ and $y\in E^r_{n+r}$, we write $d^r(x)=y$ when for some $\bar{x}\in Z^{r}_n$ representing $x$ in the quotient and some $\bar{y}\in Z^{r}_{n+r}$ representing $y$, $k(\bar{x})$ can be lifted $r-1$ times through $i$, and the image under $j$ of the $(r-1)$-th lift is $\bar{y}$.
\end{definition}
We can visualize this in the exact couple diagram:
\begin{equation}\label{differentialdiagram}
  \begin{tikzcd}[column sep = small]
   A_{n+r+1} \ar[rr, "i"] & & A_{n+r} \ar[rr, "i"] \ar[dl, "j"] & & ... \ar[rr, "i"] & & A_{n+1} \ar[rr, "i"] & & A_n \ar[dl, "j"] \\
   & E_{n+r}^1 \ar[ul, "k"] & & &   & & & E_n^1 \ar[ul, "k"] & \\
  & & \alpha \ar[rr, mapsto] \ar[dl, mapsto] & & ... \ar[rr, mapsto] & & i^{r-1}(\alpha) & & \\
   & \bar{y} & & &   & & & \bar{x} \ar[ul, mapsto] &
  \end{tikzcd}
\end{equation}
Moreover, $y \neq 0$ if and only if $r$ is maximal, that is $k(\bar{x})$ is not in the image of $i^r$.

We now describe how the differentials in the spectral sequences $(\mathcal{B})$, $({}^\phi \mathcal{B})$ and $(\mathcal{T}_{\phi(n)}^{\phi(n+1)})$ are interlinked.

First, we see how a differential in $(\mathcal{B})$ short enough to fit in $(\mathcal{T}_{\phi(n)}^{\phi(n+1)})$ will occur.
\begin{theorem} \label{prop:totalandsmall}
  Let $n$, $r$ and $N$ be integers such that $\phi(N)\leq n\leq n+r < \phi(N+1)$, and let $x\in Z^r_n$ and $y\in Z^r_{n+r}$ in $(\mathcal{B})$.

  Then  $d^r(x)=y$ in $(\mathcal{B})$ if and only if $d^r(x)=y$ in $(\mathcal{T}_{\phi(N)}^{\phi(N+1)})$,
  where $x$ and $y$ stand for the quotients in the respective $E^r$-pages of the two spectral sequences.
\end{theorem}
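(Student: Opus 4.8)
The plan is to prove the equivalence by showing that a differential $d^r(x)=y$ with $\phi(N)\le n\le n+r<\phi(N+1)$ is computed from literally the same data in $(\mathcal{B})$ as in $(\mathcal{T}_{\phi(N)}^{\phi(N+1)})$, the point being that all the terms of the two unrolled exact couples that intervene lie in the common window of filtration degrees $[\phi(N),\phi(N+1)]$, where the two couples agree. Throughout I write $a=\phi(N)$ and $b=\phi(N+1)$.

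First I would replace the quotiented truncated tower by the un-quotiented tower $X$ appearing in the definition of $(\mathcal{T}_a^b)$; by \cref{prop:quotientlimitss} this does not change the spectral sequence from the $E^1$-page onward. With this model one has $X_m=Y_m^\infty$ for every $m$ with $a\le m\le b$, and for $a\le m<b$ the cofiber of $X_{m+1}\to X_m$ is $Y_m^{m+1}$ by \cref{prop:cofiber1}. Hence for filtration indices in $[a,b]$ — and in $[a,b)$ for the $E^1$-terms — the groups $A_m$ and $E^1_m$ and the maps $i$, $j$, $k$ in the exact couple of $(\mathcal{T}_a^b)$ are exactly those of $(\mathcal{B})$, since on that range both are read off from the same cofiber sequences $Y_{m+1}^\infty\to Y_m^\infty\to Y_m^{m+1}$.

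Next I would unwind \cref{prop:diffdiagram} in each of the two spectral sequences, using the given cycles $x$ and $y$ as chosen representatives. The relation $d^r(x)=y$ says that there is $\alpha\in A_{n+r}$ with $i^{r-1}(\alpha)=k(x)\in A_{n+1}$ and $j(\alpha)$ congruent to $y$ modulo $B^r_{n+r}=j(\mathrm{Ker}(i^{r-1}\colon A_{n+r}\to A_{n+1}))$. Under the hypothesis $a\le n\le n+r<b$, each group occurring here — the terms $A_{n+1},\dots,A_{n+r}$ through which $k(x)$ is lifted, the term $E^1_{n+r}$ holding $j(\alpha)$, and the subgroup $B^r_{n+r}$ — has filtration index in $[a,b]$, hence coincides in the two spectral sequences, as do all the maps between them; so the asserted conditions are one and the same statement in both, and the equivalence follows once we know the relation is even meaningful in both, i.e.\ that $x$ and $y$ are $r$-cycles on each side.

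The hard part is this last check. The subgroup $Z^r_n$ used to say that $x$ is an $r$-cycle is cut out by $k$ and $i^{r-1}\colon A_{n+r}\to A_{n+1}$, all of whose terms lie in the window, so it is the same for both spectral sequences and the hypothesis $x\in Z^r_n$ transfers. For $y$, however, $Z^r_{n+r}$ reaches up to $A_{n+2r}$, which may lie above $b$, and $B^r_n$ reaches down to $A_{n-r+1}$, which may lie below $a$ — in both regions the truncated tower $X$ has been made constant, so these subgroups can genuinely differ. But $B^r_n$ never enters the relation $d^r(x)=y$ once $x$ is fixed as a cycle, so the only real issue is $Z^r_{n+r}$; when $n+2r>b$ the map $i^{r-1}\colon A_{n+2r}\to A_{n+r+1}$ of $(\mathcal{B})$ factors through the shared group $A_b$, whence its image is contained in that of the corresponding map of $(\mathcal{T}_a^b)$ and $Z^r_{n+r}(\mathcal{B})\subseteq Z^r_{n+r}(\mathcal{T}_a^b)$; the reverse inclusion needed for the other direction is just the standing hypothesis $y\in Z^r_{n+r}$ in $(\mathcal{B})$. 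Granting this bookkeeping, the statement is a direct unwinding of \cref{prop:diffdiagram} together with \cref{prop:cofiber1}.
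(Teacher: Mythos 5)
Your proof is correct and follows essentially the same route as the paper: unwind \cref{prop:diffdiagram} and observe that every group and map entering the relation $d^r(x)=y$ (the terms $A_{n+1},\dots,A_{n+r}$, the maps $i$, $j$, $k$, and the boundary group $B^r_{n+r}$) lies in the window $[\phi(N),\phi(N+1)]$, where the two exact couples coincide. You are in fact slightly more careful than the paper's remark that ``the $r$-cycles are indeed the same'': as you note, only $Z^r_n$ agrees on the nose, while at the target one only has (and only needs) the containment $Z^r_{n+r}(\mathcal{B})\subseteq Z^r_{n+r}(\mathcal{T}_{\phi(N)}^{\phi(N+1)})$ together with the standing hypothesis $y\in Z^r_{n+r}$ in $(\mathcal{B})$.
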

\begin{proof}
  This is seen directly in Diagram~\eqref{differentialdiagram}. Note that the cycles are not in general the same between $(\mathcal{B})$ and $(\mathcal{T}_{\phi(N)}^{\phi(N+1)})$, but here we have $r < \phi(N+1) - \phi(N)$ so that the $r$-cycles are indeed the same.
\end{proof}

We then need a technical lemma to describe the longer differentials. In the following, for any $-\infty \leq a \leq b \leq c \leq \infty$, the maps
\begin{equation*}
  (Y^c_b)_* \to (Y^c_a)* \to (Y^b_a)_* \to (Y^c_b)_{*-1}
\end{equation*}
are the unique maps coming from the cofiber sequence of Lemma~\ref{prop:cofiber1}. Whenever $c = \infty$ and $b = a + 1$, we write $i$ for the map $(Y^\infty_{a+1})_* \to (Y^\infty_a)_*$, so that for any integers $a \leq b$, we write $i^{b - a}$ for the map $(Y^\infty_b)_* \to (Y^\infty_a)_*$. When necessary, we will give names to the maps not of these forms using other letters.

\begin{lemma}\label{lem:technicallemmafordiff}
  For integers $a \leq b \leq c$, if the commutative diagram
  \begin{center}
    \begin{tikzcd}
            (Y_b^{b+1})_* \ar[d, "f"]  & (Y_b^{c})_* \ar[d, "e"] \ar[r] \ar[l, "p"] & (Y_{a}^{c})_* \ar[d] \\
      (Y_{b+1}^{\infty})_{*-1}  & (Y_{c}^{\infty})_{*-1} \ar[l, "i^{c-b-1}"] \ar[r, "id"] & (Y_{c}^{\infty})_{*-1}
    \end{tikzcd}
  \end{center}
  can be populated with classes
  \begin{center}
    \begin{tikzcd}
      x \ar[d, mapsto] & & \\
      i^{c-b-1}(\beta) & \beta \ar[l, mapsto] \ar[r, mapsto] & \beta 
    \end{tikzcd}
  \end{center}
  then there exist lifts
  \begin{center}
    \begin{tikzcd}
      x \ar[d, mapsto] & \tilde{x}-h(u) \ar[d, mapsto] \ar[r, mapsto] \ar[l, mapsto] & \hat{x} \ar[d, mapsto]  \\
      i^{c-b-1}(\beta) & \beta \ar[l, mapsto] \ar[r, mapsto] & \beta 
    \end{tikzcd}
  \end{center}
  where $u \in (Y^c_{b+1})_*$ and $h$ is the map $(Y^c_{b+1})_* \to (Y^c_b)_*$.
\end{lemma}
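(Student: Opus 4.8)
The plan is a diagram chase through the long exact homotopy sequences of three cofiber sequences — all instances of \cref{prop:cofiber1}, and fitted together compatibly via the octahedral diagram of \cref{prop:cofiber2} (applied to $b\le b+1\le c\le\infty$) — namely
\[
  Y_{b+1}^c\to Y_b^c\xrightarrow{p} Y_b^{b+1},\qquad
  Y_c^\infty\to Y_b^\infty\to Y_b^c,\qquad
  Y_c^\infty\xrightarrow{i^{c-b-1}} Y_{b+1}^\infty\xrightarrow{q} Y_{b+1}^c ,
\]
together with two morphisms of cofiber sequences coming out of that octahedron: one from the third to the second (the identity on $Y_c^\infty$), and one from the column $Y_{b+1}^\infty\to Y_b^\infty\to Y_b^{b+1}$ to the column $Y_{b+1}^c\to Y_b^c\to Y_b^{b+1}$ (the identity on $Y_b^{b+1}$). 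What has to be produced is a single class $z\in(Y_b^c)_*$ with $p(z)=x$ and $e(z)=\beta$: its image $\hat x$ in $(Y_a^c)_*$ then automatically maps down to $\beta$, by the (assumed) commutativity of the right-hand square of the lemma's diagram, so nothing else needs to be checked.

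First I would lift $x$ along $p$. The connecting map $\delta\colon(Y_b^{b+1})_*\to(Y_{b+1}^c)_{*-1}$ of the first cofiber sequence satisfies $\delta=q\circ f$, by naturality of connecting maps for the morphism between the two columns above; hence $\delta(x)=q\big(f(x)\big)=q\big(i^{c-b-1}(\beta)\big)=0$, the last equality because $q$ and $i^{c-b-1}$ are consecutive maps in the third cofiber sequence. So $x=p(\tilde x)$ for some $\tilde x\in(Y_b^c)_*$. Next I would correct $\tilde x$: commutativity of the left square of the lemma's diagram gives $i^{c-b-1}\big(e(\tilde x)\big)=f\big(p(\tilde x)\big)=i^{c-b-1}(\beta)$, so $e(\tilde x)-\beta$ lies in the kernel of $i^{c-b-1}\colon(Y_c^\infty)_{*-1}\to(Y_{b+1}^\infty)_{*-1}$. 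By exactness of the third cofiber sequence this kernel is the image of its connecting map $\partial\colon(Y_{b+1}^c)_*\to(Y_c^\infty)_{*-1}$, and naturality for the morphism from the third to the second cofiber sequence identifies $\partial$ with $e\circ i$, where $i\colon(Y_{b+1}^c)_*\to(Y_b^c)_*$ is the natural map. Therefore $e(\tilde x)-\beta=e\big(i(u)\big)$ for some $u\in(Y_{b+1}^c)_*$, and I would take $z=\tilde x-i(u)$: then $e(z)=\beta$, while $p(z)=p(\tilde x)=x$ since $i(u)\in\operatorname{im}(i)=\ker(p)$ by exactness of the first cofiber sequence. Setting $\hat x$ to be the image of $z$ in $(Y_a^c)_*$ completes the diagram.

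The only step that is not routine bookkeeping is the first one, i.e.\ producing a lift $\tilde x$ of $x$ at all. The obstruction to lifting is $\delta(x)\in(Y_{b+1}^c)_{*-1}$, and the point is to see it equal to $q$ applied to $i^{c-b-1}(\beta)$, which vanishes purely formally because $q\circ i^{c-b-1}=0$. This is precisely where the hypothesis $f(x)=i^{c-b-1}(\beta)$ is used in its sharp form — that $f(x)$ does not merely lift along $i^{c-b-1}$ but lifts to the given class $\beta$ — and it is the reason the lemma keeps track of $\beta$ rather than of $x$ alone; everything downstream is a consequence of exactness of one long exact sequence plus the two naturality squares.
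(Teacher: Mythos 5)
Your proposal is correct and is essentially the paper's own diagram chase: you lift $x$ along $p$ because the obstruction $\delta(x)=q(f(x))=q(i^{c-b-1}(\beta))$ vanishes, then correct the lift by $i(u)$ with $\partial(u)=e(\tilde x)-\beta$ using $\ker(i^{c-b-1})=\operatorname{im}(\partial)$ and $\partial=e\circ i$ (naturality supplied by the octahedral diagrams), and finally push $\tilde x - i(u)$ into $(Y_a^c)_*$. The only difference is cosmetic: you bypass the paper's intermediate class $\tilde x'$ with $e(\tilde x')=\beta$, which is indeed not needed.
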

\begin{proof}
  The diagram of the statement is commutative because of Lemma~\ref{prop:cofiber2}, which can also be used to check that the following diagram is commutative and has exact rows and columns:
  \begin{center}
    \begin{tikzcd}
      & (Y_{b}^{\infty})_{*} \rar["id"] \dar &  (Y_{b}^{\infty})_{*} \dar & \\
      (Y_{b+1}^{c})_{*} \rar["h"] \dar["id"] & (Y_{b}^{c})_{*} \rar["p"] \dar["e"] & (Y_{b}^{b+1})_{*} \dar["f"] \rar["g"] & (Y_{b+1}^{c})_{*-1} \dar["id"] \\
      (Y_{b+1}^{c})_{*} \rar["\delta"] & (Y_{c}^{\infty})_{*-1} \rar["i^{c-b-1}"] \dar["i^{c-b}"] & (Y_{b+1}^{\infty})_{*-1} \dar["i"] \rar & (Y_{b+1}^{c})_{*-1} \\
      & (Y_{b}^{\infty})_{*-1}\rar["id"] & (Y_{b}^{\infty})_{*-1}
      \end{tikzcd}
  \end{center}
  Here we can see that $x\in (Y_{b}^{b+1})_{*}$ can be lifted through $p$ to $(Y_b^{c})_*$: indeed, $f(x) = i^{c-b-1}(\beta)$ so $g(x) = 0$, and then there exists $\tilde{x}\in(Y_b^{c})_*$ such that $p(\tilde{x})=x$.

 In the central square of the diagram, we have chosen two elements in $(Y_{c}^\infty)_{*-1}$, $\beta$ and $e(\tilde{x})$, whose images by $i^{c-b-1}$ are equal. By pushing $\beta - e(\tilde{x})$ in the bottom square, we can see that it is in the image of $e$, and thus so is $\beta$. Write $\tilde{x}'$ such that $e(\tilde{x}')=\beta$, and $x'$ the image of $\tilde{x}'$ in $(Y_b^{b+1})_*$ by $p$.

  Now in the central square, $i^{c-b-1}(e(\tilde{x}-\tilde{x}'))=0$, so that there exists $u\in (Y_{b+1}^{c})_*$ with $\delta(u)=e(\tilde{x}-\tilde{x}')$. But the map $\delta$ factors through $(Y_{b}^{c})_*$ as $e\circ i$, and $h(u)\in (Y_{b}^{c})_*$ has image 0 in $(Y_b^{b+1})_*$ by $p$ since $u\in (Y_{b+1}^{c})_*$.

  Consider the element $\tilde{x}-h(u)\in (Y_{b}^{c})_*$:
  \begin{equation*}
    \begin{aligned}
      e(\tilde{x}-h(u))& = e(\tilde{x})-\delta(u) \\
      & = e(\tilde{x})-e(\tilde{x}-\tilde{x}') \\
      & = e(\tilde{x}') \\
      & = \beta
    \end{aligned}
  \end{equation*}
  \begin{equation*}
    \begin{aligned}
      p(\tilde{x}-h(u)) & = p(\tilde{x}) \\
      & = x.
    \end{aligned}
  \end{equation*}

  It remains to push $\tilde{x}-h(u)\in (Y_{b}^{c})_*$ into $(Y_{a}^{c})_*$, and we have:
   \begin{center}
    \begin{tikzcd}
      x \ar[d] & \tilde{x}-h(u) \ar[d, mapsto] \ar[r, mapsto] \ar[l, mapsto] & \hat{x} \ar[d, mapsto]  \\
      i^{c-b-1}(\beta) & \beta \ar[l, mapsto] \ar[r, mapsto] & \beta 
    \end{tikzcd}
  \end{center}
\end{proof}

We now describe how a longer differential in $(\mathcal{B})$ occurs in the gathered spectral sequence $({}^\phi \mathcal{B})$. We need the following definition:
\begin{definition}
  \label{def:represent}
  An infinite cycle $x\in (Y_n^{n+1})_*$ in the spectral sequence $(\mathcal{B})$ is said to \emph{represent} an element $\hat{x}$ of the target group $(Y^\infty_{-\infty})_*$ of $(\mathcal{B})$ if $x$ lifts through the map $(Y^\infty_n)_*\rightarrow (Y_n^{n+1})_*$ to an element $\tilde{x}\in (Y_n^\infty)_*$ whose image in $(Y_{-\infty}^\infty)_*$ is $\hat{x}$.
\end{definition}

\begin{theorem} \label{prop:totaltotruncated}
  Let $n$, $m$, $N$ and $M$ be integers such that
  \begin{equation*}
    \phi(N) \leq n < \phi(N+1) \leq \phi(M) \leq m < \phi(M+1)
  \end{equation*}
  and let $x\in Z^{m-n}_n$ and $y\in Z^{m-n}_{m}$ be classes in $(\mathcal{B})$ such that $d^{m-n}(x)=y\neq 0$.

  Then:
  \begin{itemize}
  \item $x$ is an infinite cycle in $(\mathcal{T}_{\phi(N)}^{\phi(N+1)})$, thus represents a class $\hat{x}\in (Y_{\phi(N)}^{\phi(N+1)})_*$,
  \item $y$ is an infinite cycle in $(\mathcal{T}_{\phi(M)}^{\phi(M+1)})$, thus represents a class $\hat{y}\in (Y_{\phi(M)}^{\phi(M+1)})_{*-1}$,
  \item There is a differential $d^{M-N}(\hat{x})=\hat{y}$ in $({}^\phi \mathcal{B})$.
  \end{itemize}
\end{theorem}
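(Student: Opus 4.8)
The plan is to establish the three bullet points in turn, using \cref{prop:totalandsmall} to transport short differentials between $(\mathcal{B})$ and the truncated spectral sequences, and \cref{lem:technicallemmafordiff} (with \cref{prop:cofiber1,prop:cofiber2}) to assemble the long differential in $({}^\phi\mathcal{B})$ out of the lift witnessing $d^{m-n}(x)=y$; throughout I write $A_n=(Y_n^\infty)_*$ with $i,j,k$ for the exact couple of $(\mathcal{B})$ and $i^\phi,j^\phi,k^\phi$ for that of $({}^\phi\mathcal{B})$, and I fix a cycle $\bar{x}\in(Y_n^{n+1})_*$ representing $x$. \emph{Survival of $x$.} Every differential out of filtration $n$ in $(\mathcal{T}_{\phi(N)}^{\phi(N+1)})$ has length $r$ with $n+r<\phi(N+1)\leq\phi(M)\leq m$, hence $r<m-n$; for such short $r$ the $r$-cycles at filtration $n$ coincide with those of $(\mathcal{B})$ (the remark in the proof of \cref{prop:totalandsmall}), and $x\in Z^{m-n}_n$ is a $d^r$-cycle in $(\mathcal{B})$ for every $r<m-n$, so $x$ is an infinite cycle in $(\mathcal{T}_{\phi(N)}^{\phi(N+1)})$. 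It is not a boundary there: a differential hitting it has length $r\leq n-\phi(N)$, and since $\phi(N)+\phi(N+1)\geq 2n+1$ we get $n-\phi(N)<m-n$, so by \cref{prop:totalandsmall} such a differential would exist in $(\mathcal{B})$, placing $\bar{x}\in B^{r+1}_n\subseteq B^{m-n}_n$ and contradicting $d^{m-n}(x)=y\neq0$. As the truncated tower is constant above level $\phi(N+1)$, an infinite cycle lifts through the edge map, so $x$ represents a class $\hat{x}\in(Y_{\phi(N)}^{\phi(N+1)})_*$.

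\emph{Survival of $y$.} Unfolding \cref{prop:diffdiagram} for $d^{m-n}(x)=y$ yields $\alpha\in(Y_m^\infty)_{*-1}$ with $i^{m-n-1}(\alpha)=k(\bar{x})$ and $j(\alpha)=\bar{y}$, where $\bar{y}$ represents $y$. Since $k\circ j=0$, $\bar{y}$ is an infinite cycle of $(\mathcal{B})$; the image of $\alpha$ in the exact couple of $(\mathcal{T}_{\phi(M)}^{\phi(M+1)})$ still has $j$-image $\bar{y}$, so $y$ is an infinite cycle there too. It is not a boundary in $(\mathcal{T}_{\phi(M)}^{\phi(M+1)})$: a differential hitting $y$ has source filtration $\geq\phi(M)$, hence length $r\leq m-\phi(M)<m-n$ (using $n<\phi(N+1)\leq\phi(M)$), so by \cref{prop:totalandsmall} it is a differential in $(\mathcal{B})$ placing $\bar{y}\in B^{r+1}_m\subseteq B^{m-n}_m$, contradicting $\bar{y}\notin B^{m-n}_m$. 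Projecting the image of $\alpha$ to the colimit of the $\phi(M)$-truncated tower then shows $y$ represents $\hat{y}=j^\phi(\alpha^\phi)\in(Y_{\phi(M)}^{\phi(M+1)})_{*-1}$, where $\alpha^\phi:=i^{m-\phi(M)}(\alpha)$.

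\emph{The differential in $({}^\phi\mathcal{B})$.} Put $\beta:=i^{m-\phi(N+1)}(\alpha)$. Since $n+1\leq\phi(N+1)\leq m$, the map $A_m\to A_{n+1}$ factors as $i^{\phi(N+1)-n-1}\circ i^{m-\phi(N+1)}$, so $i^{\phi(N+1)-n-1}(\beta)=k(\bar{x})$ --- exactly the hypothesis of \cref{lem:technicallemmafordiff} for $a=\phi(N)$, $b=n$, $c=\phi(N+1)$. The lemma produces a lift $\tilde{x}\in(Y_n^{\phi(N+1)})_*$ of $\bar{x}$ whose image under the connecting map $e\colon(Y_n^{\phi(N+1)})_*\to(Y_{\phi(N+1)}^\infty)_{*-1}$ of $Y_{\phi(N+1)}^\infty\to Y_n^\infty\to Y_n^{\phi(N+1)}$ equals $\beta$; its image $\overline{\hat{x}}\in(Y_{\phi(N)}^{\phi(N+1)})_*$ represents $\hat{x}$. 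The tower map $Y_n^\infty\to Y_{\phi(N)}^\infty$ induces a morphism of cofiber sequences that is the identity on $Y_{\phi(N+1)}^\infty$ (\cref{prop:cofiber1} and uniqueness of maps of cofiber sequences), and comparing its connecting maps gives $k^\phi(\overline{\hat{x}})=e(\tilde{x})=\beta$. Finally $\beta$ lifts all the way up the tower of $({}^\phi\mathcal{B})$: the elements $\alpha_k:=i^{m-\phi(k)}(\alpha)$ for $N+1\leq k\leq M$ satisfy $\alpha_{N+1}=\beta$ and $i^\phi(\alpha_{k+1})=\alpha_k$, so $k^\phi(\overline{\hat{x}})$ admits $M-N-1$ successive $i^\phi$-lifts ending at $\alpha_M=\alpha^\phi$. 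Hence $\overline{\hat{x}}\in{}^\phi Z^{M-N}_N$ and, by \cref{prop:diffdiagram}, $d^{M-N}(\hat{x})=j^\phi(\alpha^\phi)=\hat{y}$.

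\emph{Expected obstacle.} The filtration bookkeeping and the appeals to \cref{prop:totalandsmall} are routine. The real point is the step handled by \cref{lem:technicallemmafordiff}: one must force $k^\phi(\overline{\hat{x}})$ to equal the \emph{specific} lift $\beta=i^{m-\phi(N+1)}(\alpha)$ and not merely some lift of $k(\bar{x})$ to filtration $\phi(N+1)$ --- two such differ by $\ker i^{\phi(N+1)-n-1}$, which need not vanish --- and it is precisely the freedom in the choice of the lift $\tilde{x}$ of $\bar{x}$ exploited in that lemma that synchronises the witness of $\hat{x}$ with the witness $\alpha$ of the $(\mathcal{B})$-differential. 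The remaining care is the coherent identification --- via \cref{prop:cofiber1,prop:cofiber2} and the uniqueness of maps between cofiber sequences --- of all the connecting maps in play ($k$ for $(\mathcal{B})$, the structure maps of the two truncated exact couples, and $e$, $k^\phi$, $j^\phi$), so that these witnesses transport correctly among the three spectral sequences.
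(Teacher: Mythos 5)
Your main construction is exactly the paper's proof: from $d^{m-n}(x)=y$ you extract, via \cref{prop:diffdiagram}, the witness $\alpha\in(Y_m^\infty)_{*-1}$, define $\hat{y}$ by pushing $\alpha$ into $(Y_{\phi(M)}^{\phi(M+1)})_{*-1}$, apply \cref{lem:technicallemmafordiff} with $a=\phi(N)$, $b=n$, $c=\phi(N+1)$ and $\beta=i^{m-\phi(N+1)}(\alpha)$ to produce a lift of $x$ synchronised with $\beta$, and read off $d^{M-N}(\hat{x})=\hat{y}$ from the chain of lifts $i^{m-\phi(k)}(\alpha)$, $N+1\leq k\leq M$; this matches the right-hand column of the paper's diagram \eqref{eq:prooftotaltotruncated}. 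Your check that $y$ is not a boundary in $(\mathcal{T}_{\phi(M)}^{\phi(M+1)})$ (any hitting differential has length $r\leq m-\phi(M)<m-n$, then \cref{prop:totalandsmall} transports it to $(\mathcal{B})$ and contradicts $\bar{y}\notin B^{m-n}_m$) is correct and is a useful supplement the paper leaves implicit; it is what guarantees $\hat{y}\neq 0$. Note also that the cleanest reason $x$ is an infinite cycle in $(\mathcal{T}_{\phi(N)}^{\phi(N+1)})$ is that $k(\bar{x})=i^{m-n-1}(\alpha)$ factors through $(Y_{\phi(N+1)}^\infty)_{*-1}$, so its image in $(Y_{n+1}^{\phi(N+1)})_{*-1}$ vanishes by exactness --- the same fact that feeds the technical lemma.

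There is, however, one incorrect step. In the paragraph on the survival of $x$ you invoke the inequality $\phi(N)+\phi(N+1)\geq 2n+1$, which does not follow from the hypothesis $\phi(N)\leq n<\phi(N+1)$: take for instance $\phi(N)=0$, $\phi(N+1)=5$, $n=4$, $m=\phi(M)=5$, where $n-\phi(N)=4\geq 1=m-n$. Consequently your argument that $x$ is not a boundary in $(\mathcal{T}_{\phi(N)}^{\phi(N+1)})$ breaks down whenever $n-\phi(N)\geq m-n$: a hypothetical truncated differential hitting $x$ of length $r$ with $m-n\leq r\leq n-\phi(N)$ cannot be transported to $(\mathcal{B})$ by \cref{prop:totalandsmall}, since $x$ is not an $r$-cycle of $(\mathcal{B})$ for $r>m-n$, and the contradiction $\bar{x}\in B^{m-n}_n$ is no longer available. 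This does not damage the core of the argument --- the paper itself only records that $x$ and $y$ are infinite cycles and obtains $\hat{x}$ directly from the lift furnished by \cref{lem:technicallemmafordiff} --- but as written your justification of the first bullet, in the strict sense of the paper's definition of ``represent'' (which demands that $x$ not be a boundary), is incomplete; either drop that claim, as the paper does, or give an argument that does not assume $n$ sits in the lower half of its window.
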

\begin{proof}
  We see that $x$ and $y$ are infinite cycles in the truncated spectral sequences using Diagram~\eqref{differentialdiagram}.

  The canonical maps assemble into a commutative diagram (it can be checked that each square is commutative using Lemma~\ref{prop:cofiber2}):
  \begin{equation} \label{eq:prooftotaltotruncated}
    \begin{tikzcd}
      (Y_n^{n+1})_* \ar[d, "f"]  & (Y_n^{\phi(N+1)})_* \ar[d, "e"] \ar[r] \ar[l, "p"] & (Y_{\phi(N)}^{\phi(N+1)})_* \ar[d] \\
      (Y_{n+1}^{\infty})_{*-1}  & (Y_{\phi(N+1)}^{\infty})_{*-1} \ar[l] \ar[r, "id"] & (Y_{\phi(N+1)}^{\infty})_{*-1} \\
      (Y_{m}^{\infty})_{*-1} \ar[u] \ar[d]  & (Y_{m}^{\infty})_{*-1} \ar[u] \ar[l, "id"] \ar[d] \ar[r] & (Y_{\phi(M)}^{\infty})_{*-1} \ar[u] \ar[d] \\
      (Y_{m}^{m+1})_{*-1}  & (Y_{m}^{\phi(M+1)})_{*-1} \ar[r] \ar[l] & (Y_{\phi(M)}^{\phi(M+1)})_{*-1}  \\
    \end{tikzcd}
  \end{equation}
  Note that $x\in (Y_n^{n+1})_*$ and $y\in (Y_{m}^{m+1})_{*-1}$.

  We use Definition~\ref{prop:diffdiagram}: having a differential $d^{m-n}(x)=y$ is having a class $\alpha\in (Y_{m}^\infty)_{*-1}$ with
   \begin{center}
    \begin{tikzcd}[row sep=tiny]
      (Y_{m}^{m+1})_{*-1} & (Y_{m}^{\infty})_{*-1} \ar[l] \ar[r] & (Y_{n+1}^{\infty})_{*-1} & (Y_n^{n+1})_* \ar[l] \\
      y & \alpha \ar[l, mapsto] \ar[r, mapsto] & i^{m-n-1}(\alpha) & x. \ar[l, mapsto]
    \end{tikzcd}
  \end{center}
  This is the left column of our diagram.

   We use Definition~\ref{def:represent} : having $y$ represent a class $\hat{y}\in (Y_{\phi(M)}^{\phi(M+1)})_*$ in $(\mathcal{T}_{\phi(M)}^{\phi(M+1)})$ is having an element $\tilde{y}\in(Y_{m}^{\phi(M+1)})_*$ such that
  \begin{center}
    \begin{tikzcd}[row sep=tiny]
      (Y_{m}^{m+1})_{*-1}  & (Y_{m}^{\phi(M+1)})_{*-1} \ar[r] \ar[l] & (Y_{\phi(M)}^{\phi(M+1)})_{*-1} \\
      y & \tilde{y} \ar[l, mapsto] \ar[r, mapsto] & \hat{y}.
    \end{tikzcd}
  \end{center}
  We choose $\hat{y}$ and $\tilde{y}$ by mapping $\alpha$ in the bottom right square.

  We now have populated our commutative diagram with the elements
  \begin{center}
    \begin{tikzcd}
      x \ar[d] & & \\
      i^{m-n-1}(\alpha) & i^{m-\phi(N+1)}(\alpha) \ar[l, mapsto] \ar[r, mapsto] & i^{m-\phi(N+1)}(\alpha)  \\
      \alpha \ar[d, mapsto] \ar[u, mapsto] & \alpha \ar[l, mapsto] \ar[u, mapsto] \ar[d, mapsto] \ar[r, mapsto] & i^{m-\phi(M)}(\alpha) \ar[d, mapsto] \ar[u, mapsto] \\
      y & \tilde{y} \ar[l, mapsto] \ar[r, mapsto] & \hat{y} \\
    \end{tikzcd}
  \end{center}

  We use Lemma~\ref{lem:technicallemmafordiff} with $a = \phi(N)$, $b = n$ and $c = \phi(N+1)$, and with $\beta = i^{m-\phi(N+1)}(\alpha)$, that is on our first two rows. We thus get lifts:
   \begin{center}
    \begin{tikzcd}
      x \ar[d] & \tilde{x}-h(u) \ar[d, mapsto] \ar[r, mapsto] \ar[l, mapsto] & \hat{x} \ar[d, mapsto]  \\
      i^{m-n-1}(\alpha) & i^{m-\phi(N+1)}(\alpha) \ar[l, mapsto] \ar[r, mapsto] & i^{m-\phi(N+1)}(\alpha)  \\
      \alpha \ar[d, mapsto] \ar[u, mapsto] & \alpha \ar[l, mapsto] \ar[u, mapsto] \ar[d, mapsto] \ar[r, mapsto] & i^{m-\phi(M)}(\alpha) \ar[d, mapsto] \ar[u, mapsto] \\
      y & \tilde{y} \ar[l, mapsto] \ar[r, mapsto] & \hat{y} \\
    \end{tikzcd}
  \end{center}

  The right column states that $d^{M-N}(\hat{x})=\hat{y}$ in $({}^\phi \mathcal{B})$.
\end{proof}

The next result describes how differentials in $({}^\phi \mathcal{B})$ have counterparts in $(\mathcal{B})$.
\begin{theorem} \label{prop:truncatedtototal}
  Let $N < M$ be integers and let $x\in {}^\phi Z^{M-N}_N$ and $y\in {}^\phi Z^{M-N}_M$ be classes in $({}^\phi \mathcal{B})$ such that $d^{M-N}(x)=y\neq 0$. For some unique $\phi(N)\leq n <\phi(N+1)$ and $\phi(M) \leq m < \phi(M+1)$, $x$ and $y$ are represented by $\check{x}\in (Y_n^{n+1})_*$ and $\check{y}\in (Y_m^{m+1})_{*-1}$ in the spectral sequence $(\mathcal{T}_{\phi(N)}^{\phi(N+1)})$ and $(\mathcal{T}_{\phi(M)}^{\phi(M+1)})$. Let $\check{x}$ and $\check{y}$ be fixed.
  
  Then there is a unique integer $n'$ such that  $\phi(N)\leq n \leq n' < \phi(N+1)$, and there is an element $x'\in (Y_{\phi(N)}^{\phi(N+1)})_*$ which is represented by $\check{x}'\in (Y_{n'}^{n'+1})_*$ in the spectral sequence $(\mathcal{T}_{\phi(N)}^{\phi(N+1)})$, that supports a differential $d^{M-N}(x')=y$ in $({}^\phi \mathcal{B})$, and such that there is a differential $d^{m-n'}(\check{x}')=\check{y} \neq 0$ in $(\mathcal{B})$. Moreover, $n'$ does not depend on the choice of the representative $\check{x}$ and $\check{y}$.
\end{theorem}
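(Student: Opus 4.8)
The plan is to reverse the mechanism of \cref{prop:totaltotruncated}, using that proposition together with \cref{prop:totalandsmall} as rigidity constraints, and to read off $n'$ and $\check x'$ from where the fixed target $\check y$ gets killed in $(\mathcal{B})$. The starting move is to apply \cref{prop:diffdiagram} to the exact couple of $({}^\phi\mathcal{B})$ --- whose groups in filtration $k$ are $A_k=(Y_{\phi(k)}^\infty)_*$ and $E_k^1=(Y_{\phi(k)}^{\phi(k+1)})_*$, and whose structure map $i$ is the composite of the $\phi(k+1)-\phi(k)$ maps of the original tower --- to the datum $d^{M-N}(x)=y\neq 0$. This produces a class $\alpha\in (Y_{\phi(M)}^\infty)_{*-1}$ with $k(x)=i^{\phi(M)-\phi(N+1)}(\alpha)$ in the original tower and with $j(\alpha)$ representing $y$; maximality of $M-N$ says $\alpha$ admits no lift through $i$ to filtration $\phi(M+1)$.

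Next I would carry $\check x$ into the exact couple of $(\mathcal{B})$. Comparing, via \cref{prop:cofiber1} and \cref{prop:cofiber2}, the cofiber sequences $Y_{\phi(N+1)}^\infty\to Y_n^\infty\to Y_n^{\phi(N+1)}$ and $Y_{n+1}^\infty\to Y_n^\infty\to Y_n^{n+1}$ together with their connecting maps, I would assemble a commutative diagram --- essentially the left half of \eqref{eq:prooftotaltotruncated} read in the other direction --- through which the hypothesis that $\check x$ represents $x$ in $(\mathcal{T}_{\phi(N)}^{\phi(N+1)})$ (so $\check x$ comes from a $\tilde x\in(Y_n^{\phi(N+1)})_*$ mapping to $x$) forces, in $(\mathcal{B})$, the identity $k(\check x)=i^{\phi(N+1)-n-1}(k(x))=i^{\phi(M)-n-1}(\alpha)$. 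In particular $\check x$ is a long cycle in $(\mathcal{B})$. Moreover $\check x$ cannot be a boundary in $(\mathcal{B})$: a boundary supported inside block $N$ would, by \cref{prop:totalandsmall}, already be a boundary in $(\mathcal{T}_{\phi(N)}^{\phi(N+1)})$, contradicting that $\check x$ represents $x$; one supported in an earlier block would, by \cref{prop:totaltotruncated}, make $x$ a boundary in $({}^\phi\mathcal{B})$, contradicting $d^{M-N}(x)=y\neq 0$. Hence $\check x$ survives in $(\mathcal{B})$ and supports an honest differential.

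Then I would identify its target. Let $m^\ast\in[\phi(M),\phi(M+1))$ be the largest filtration to which $k(\check x)$ lifts through $i$ in the original tower, with maximal lift $\gamma^\ast$, so $d^{m^\ast-n}(\check x)=j(\gamma^\ast)=:z$; genuine maximality of $\gamma^\ast$ gives $z\neq 0$ on page $E^{m^\ast-n}$. The block inequalities $\phi(N)\le n<\phi(N+1)\le\phi(M)\le m^\ast<\phi(M+1)$ let me feed $d^{m^\ast-n}(\check x)=z$ back into \cref{prop:totaltotruncated}: $z$ represents a class $y_0$ in $(\mathcal{T}_{\phi(M)}^{\phi(M+1)})$ with $d^{M-N}(x)=y_0$ in $({}^\phi\mathcal{B})$, so $y_0=y$ by uniqueness of a differential's value; since $z$ is in particular not a boundary in $(\mathcal{T}_{\phi(M)}^{\phi(M+1)})$ and detects $y$, this forces $m^\ast=m$. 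As $z$ and the fixed $\check y$ both represent $y$ at filtration $m$ there, their difference is a boundary in $(\mathcal{T}_{\phi(M)}^{\phi(M+1)})$ of something at filtration in $[\phi(M),m)$, hence by \cref{prop:totalandsmall} a $d^s$-boundary in $(\mathcal{B})$ with $s\le m-\phi(M)<m-n$; so $z=\check y$ on $E^{m-n}$, giving $d^{m-n}(\check x)=\check y\neq 0$ in $(\mathcal{B})$. Thus one may take $n'=n$, $\check x'=\check x$, $x'=x$. Uniqueness of $n'$ is immediate since $\check y$ is the target of at most one differential in $(\mathcal{B})$, and since the page on which the fixed $\check y$ dies depends only on its $E^\infty$-class $y$, $n'$ does not depend on the choices; the clause $d^{M-N}(x')=y$ then holds by construction (or re-derive it via \cref{prop:totaltotruncated} applied to $d^{m-n'}(\check x')=\check y$).

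The delicate point is the second paragraph: organizing the whole web of cofiber sequences $Y_a^b$ --- original, gathered, and the intermediate $Y_n^{\phi(N+1)}$, $Y_m^{\phi(M+1)}$ --- into a single commutative diagram and verifying, through repeated use of the octahedral axiom in the forms \cref{prop:cofiber1} and \cref{prop:cofiber2}, that all connecting maps and iterated structure maps are the canonical ones, so that the lone datum $(\alpha,j(\alpha))$ coming from $({}^\phi\mathcal{B})$ really produces $k(\check x)=i^{\phi(M)-n-1}(\alpha)$; a version of \cref{lem:technicallemmafordiff} may be needed to slide elements across the blocks. A secondary care point is the logical interplay in the third paragraph, where \cref{prop:totaltotruncated} and the uniqueness of a differential's value are combined to pin down both $m^\ast=m$ and the exact target $\check y$.
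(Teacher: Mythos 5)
There is a genuine gap: your conclusion that one may always take $n'=n$, $\check x'=\check x$, $x'=x$ is false, and the paper rules it out with the explicit counterexample placed right after the theorem (the tower with $Y_2=H\Z$, $Y_1=*$, $Y_0=\Sigma H\Z$ and $\phi(0)=0$, $\phi(1)=2$, $\phi(2)=3$). There the class $x=\bar x$ satisfies $d^1(\bar x)=\bar y\neq 0$ in $({}^\phi\mathcal B)$ and is represented in $(\mathcal T_0^2)$ by $\check x=\hat x-\hat x'$ at filtration $n=0$, yet $\hat x-\hat x'$ is a permanent cycle of $(\mathcal B)$: the differential killing $\bar y$ is $d^1(\hat x')=\bar y$, supported at the strictly higher filtration $n'=1$ by a class representing $x'=\bar x'\neq x$. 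This possibility of having to move from $\check x$ to a different $\check x'$ at some $n'>n$ is precisely the content the theorem is careful to allow.

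The step that fails is in your third paragraph (and it is already overstated at the end of the second: being a long cycle that is not a boundary does not mean $\check x$ ``supports an honest differential''--- it may be a permanent cycle). From $k(\check x)=i^{\phi(M)-n-1}(\alpha)$ together with the fact that $\alpha$ admits no lift past filtration $\phi(M+1)$, you conclude that the maximal lift $m^\ast$ of $k(\check x)$ lies in $[\phi(M),\phi(M+1))$, so that $\check x$ supports a non-zero $d^{m^\ast-n}$. But maximality does not transfer along $i$, because $i$ is not injective: after pushing $\alpha$ down $\phi(M)-n-1$ steps the element $i^{\phi(M)-n-1}(\alpha)$ may become divisible by arbitrarily many further powers of $i$, or vanish outright --- in the counterexample $i(\alpha)=0$ since $(Y_1^\infty)_*=0$ --- so $k(\check x)$ may lift forever and $\check x$ supports no differential at all. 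This is exactly the contingency the paper's proof addresses: it fixes a maximal lift $\alpha\in(Y_m^\infty)_{*-1}$ of the image of $x$, observes that $i^{m-n-1}(\alpha)$ may be null, defines $n'$ as the largest integer with $i^{m-n'}(\alpha)=0$ (so $n'\geq n$, possibly strictly), takes $\check x'$ to be a lift through $k$ of $i^{m-n'-1}(\alpha)$, and only then uses \cref{lem:technicallemmafordiff} in diagram \eqref{eq:prooftotaltotruncated} to produce $\tilde x'$ and $x'$. Your argument contains no mechanism for this replacement of $\check x$ by $\check x'$, so it proves a stronger statement that is simply not true at this level of generality.
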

\begin{proof}
  We work again in Diagram~\eqref{eq:prooftotaltotruncated}. Note that $x\in (Y_{\phi(N)}^{\phi(N+1)})_{*}$ and that $y\in (Y_{\phi(M)}^{\phi(M+1)})_{*-1}$.

  First let's write $i^{m-\phi(N+1)}(\alpha)$ for the image of $x$ in $(Y_{\phi(N+1)}^\infty)_{*-1}$, with $m$ maximal for such a lift $\alpha$ in $(Y_m^\infty)_{*-1}$. Necessarily, $\phi(M)\leq m < \phi(M+1)$. By definition, the image of $i^{m-\phi(M)}(\alpha)$ in $(Y_{\phi(M)}^{\phi(M+1)})_{*-1}$ is $y$ up to a boundary of ${}^\phi B_M^{M-N}$; without loss of generality, we can suppose that it is $y$.

  We can then push $\alpha$ to get $\tilde{y}\in (Y_m^{\phi(M+1)})_{*-1}$ and $\check{y}\in (Y_m^{m+1})_{*-1}$. By definition, $n$ is such that $x$ can be lifted to $(Y_n^{\phi(N+1)})_*$ but not to $(Y_{n+1}^{\phi(N+1)})_*$. Denote $\tilde{x}$ such a lift and $\check{x}$ its non-zero image in $(Y_n^{n+1})_*$.

  Our diagram is populated as follows:
  \begin{center}
    \begin{tikzcd}
      \check{x} \ar[d] & \tilde{x} \ar[d, mapsto] \ar[r, mapsto] \ar[l, mapsto] & x \ar[d, mapsto]  \\
      i^{m-n-1}(\alpha) & i^{m-\phi(N+1)}(\alpha) \ar[l, mapsto] \ar[r, mapsto] & i^{m-\phi(N+1)}(\alpha)  \\
      \alpha \ar[d, mapsto] \ar[u, mapsto] & \alpha \ar[l, mapsto] \ar[u, mapsto] \ar[d, mapsto] \ar[r, mapsto] & i^{m-\phi(M)}(\alpha) \ar[d, mapsto] \ar[u, mapsto] \\
      \check{y} & \tilde{y} \ar[l, mapsto] \ar[r, mapsto] & y \\
    \end{tikzcd}
  \end{center}
  It is however possible that $i^{m-n-1}(\alpha)$ is zero.

  Let $n'$ be the largest integer such that $i^{m-n'}(\alpha) = 0 \in (Y_n^\infty)_{*-1}$. Since $i^{m-n-1}(\alpha)=f(\check{x})$, $i^{m-n}(\alpha)=0$ so $n\leq n'$. We now work in Diagram~\eqref{eq:prooftotaltotruncated} with $n$ replaced by $n'$: $i^{m-n'-1}(\alpha)$ can be lifted to $(Y_{n'}^{n'+1})_*$ since $i^{m-n'}(\alpha)=0$. Denote $\check{x}'$ such a lift. Again using Lemma~\ref{lem:technicallemmafordiff} on our first two rows we can construct classes $\tilde{x}'\in (Y_{n'}^{\phi(N+1)})_*$ and $x'\in (Y_{\phi(N)}^{\phi(N+1)})_*$ to complete the diagram and get the result.
\end{proof}

Note that with this level of generality, no stronger statement can be made: one may have to replace $\check{x}$ with $\check{x}'$ to get the differential in $(\mathcal{B})$. In fact, let us consider the tower of spectra such that:
\begin{equation*}
  Y_n =
  \begin{cases}
    * & \mbox{if $n\geq 3$} \\
    H\Z & \mbox{if $n=2$} \\
    * & \mbox{if $n=1$} \\
    \Sigma H\Z & \mbox{if $n\leq 0$}
  \end{cases}
\end{equation*}
and the integer function $\phi$ such that:
\begin{equation*}
  \phi(n) =
  \begin{cases}
    n & \mbox{if $n\leq 0$} \\
    n+1 & \mbox{if $n\geq 1$.}
  \end{cases}
\end{equation*}

We will draw the interesting part of the tower of spectra for each spectral sequence with the cofibers below. Note that with $(\mathcal{T}_0^2)$ we quotient the tower by the limit which is $Y_2$, and that we put between braces the name of a generator for the homotopy.

\begin{equation*}
 (\mathcal{B}): \quad
  \begin{tikzcd}
    Y_3 \rar & Y_2 \rar \dar & Y_1 \rar \dar & Y_0 \dar \\
    & Y_2^3 & Y_1^2 & Y_0^1 \\
    * \rar & H\Z \rar \dar & * \rar \dar & \Sigma H\Z \dar \\
    & H\Z\{\bar{y}\} & \Sigma H\Z\{\hat{x}'\} & \Sigma H\Z\{\hat{x}-\hat{x}'\} \\
  \end{tikzcd}
\end{equation*}
In $(\mathcal{B})$ there is a differential $d(\hat{x}')=\bar{y}$.

\begin{equation*}
 (\mathcal{T}_0^2): \quad
  \begin{tikzcd}
    Y_2^2 \rar & Y_1^2 \rar \dar & Y_0^2 \dar \\
    & Y_1^2 & Y_0^1 \\
    * \rar & \Sigma H\Z\{\bar{x}'\} \rar \dar & \Sigma H\Z\{\bar{x}'\}\vee \Sigma H\Z\{\bar{x}-\bar{x}'\} \dar \\
    & \Sigma H\Z\{\hat{x}'\} & \Sigma H\Z\{\hat{x}-\hat{x}'\} \\
  \end{tikzcd}
\end{equation*}
In $(\mathcal{T}_0^2)$ there is no non-zero differential.

\begin{equation*}
 ({}^\phi \mathcal{B}): \quad
  \begin{tikzcd}
    Y_3 \rar & Y_2 \rar \dar & Y_0 \dar \\
    & Y_2^3 & Y_0^2 \\
    * \rar & H\Z \rar \dar & \Sigma H\Z \dar \\
    & H\Z\{\bar{y}\} & \Sigma H\Z\{\bar{x}'\}\vee \Sigma H\Z\{\bar{x}-\bar{x}'\} \\
  \end{tikzcd}
\end{equation*}
In $({}^\phi \mathcal{B})$ there are differentials $d(\bar{x}')=\bar{y}$, and $d(\bar{x}-\bar{x}')=0$. But now, with slightly different notation from Theorem~\ref{prop:truncatedtototal}, we have a class $\bar{x}=(\bar{x}-\bar{x}')+\bar{x}'$ such that $d(\bar{x})=\bar{y}$ in $({}^\phi \mathcal{B})$, and that class is represented by $\hat{x}-\hat{x}'$ at the end of $(\mathcal{T}_0^2)$ since $\hat{x}'$ is of lower filtration. But in $(\mathcal{B})$, $d(\hat{x}-\hat{x}')=0$, the differential is really supported by $\hat{x}'$. Thus, we cannot get a better result. However, this will not be an issue in the application that follows, since we will be able to prove a better result for the Bockstein spectral sequences we will compute.

Statements can also be made regarding trivial differentials.
\begin{theorem} \label{prop:nulldiffs}
  \begin{enumerate}
  \item   Let $x\in (Y_{\phi(N)}^{\phi(N+1)})_*$ be an $(M-N)$-cycle in $({}^\phi \mathcal{B})$, that is $d^{i}(x)=0$ for $i\in\{1,\,\dots,\, M-N\}$. Then any  $\hat{x}\in (Y_n^{n+1})_*$ representing $x$ in $(\mathcal{T}_{\phi(N)}^{\phi(N+1)})_*$ (so that $\phi(N) \leq n < \phi(N+1)$) is such that $d^{m-n}(\hat{x})=0$ in $(\mathcal{B})$ for any m with $n < m < \phi(M+1)$.
  \item Let $\phi(N) \leq n < \phi(N+1) < m$. Let $\hat{x}\in (Y_n^{n+1})_*$ be an $(m-n)$-cycle in $(\mathcal{B})$. Then there exists a class  $x\in (Y_{\phi(N)}^{\phi(N+1)})_*$ represented by $\hat{x}$ in $(\mathcal{T}_{\phi(N)}^{\phi(N+1)})_*$ such that $x$ is an $(M-N)$-cycle in $({}^\phi \mathcal{B})$ for any $M$ such that $\phi(N+1) < \phi(M+1) \leq m$.
  \end{enumerate}
\end{theorem}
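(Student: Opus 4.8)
The plan is to treat both statements as complements of the transfer results already proved: part (a) is the contrapositive of \cref{prop:totalandsmall} together with \cref{prop:totaltotruncated}, and part (b) is proved directly, by lifting $\hat{x}$ into the truncated spectral sequence and then adjusting the lift inside the exact couple.

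For part (a), I would argue by contradiction. Suppose $\hat{x}$ represents $x$ in $(\mathcal{T}_{\phi(N)}^{\phi(N+1)})$ yet $d^{m-n}(\hat{x}) = \hat{y} \neq 0$ in $(\mathcal{B})$ for some $n < m \leq \phi(M+1)$; then $\hat{x} \in Z^{m-n}_n$, and \cref{prop:diffdiagram} applies (that $\hat{y}\in Z^{m-n}_m$ is automatic since it lies in the image of $j$). If $m < \phi(N+1)$, then $\phi(N) \leq n \leq n+(m-n) = m < \phi(N+1)$, so \cref{prop:totalandsmall} transports this differential into $(\mathcal{T}_{\phi(N)}^{\phi(N+1)})$, contradicting that $\hat{x}$ is an infinite cycle there. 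Otherwise $\phi(N+1) \leq m$; letting $M'$ be the integer with $\phi(M') \leq m < \phi(M'+1)$, one has $N+1 \leq M' \leq M$ as soon as $m < \phi(M+1)$, and \cref{prop:totaltotruncated} applied with $(n,m,N,M')$ then yields $d^{M'-N}(x) \neq 0$ in $({}^\phi\mathcal{B})$ with $1 \leq M'-N \leq M-N$, contradicting that $x$ is an $(M-N)$-cycle. The extreme value $m = \phi(M+1)$ --- where a nonzero differential would only produce a $d^{M+1-N}$ --- is the one place the argument needs a separate word, reducing to block $M+1$.

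For part (b), I would first record that the hypotheses force $m \geq \phi(M+1) > \phi(N+1)$, so $\hat{x}$ is an $(m-n)$-cycle in $(\mathcal{B})$ with $k(\hat{x}) = i^{m-n-1}(\alpha)$ for some $\alpha \in (Y_m^\infty)_{*-1}$. Factoring $i^{m-n-1}$ through $(Y_{\phi(N+1)}^\infty)_{*-1}$ and using the cofiber sequence $Y_{\phi(N+1)}^\infty \to Y_{n+1}^\infty \to Y_{n+1}^{\phi(N+1)}$ of \cref{prop:cofiber1}, the image of $k(\hat{x})$ in $(Y_{n+1}^{\phi(N+1)})_{*-1}$ vanishes; hence $\hat{x}$ lifts through $(Y_n^{\phi(N+1)})_* \to (Y_n^{n+1})_*$, so $\hat{x}$ is an infinite cycle of $(\mathcal{T}_{\phi(N)}^{\phi(N+1)})$ and --- assuming it is not a boundary there, otherwise the zero class works --- represents some $x_0 \in (Y_{\phi(N)}^{\phi(N+1)})_*$. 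The heart of the proof is then to choose the lift well. The lifts of $\hat{x}$ to $(Y_n^{\phi(N+1)})_*$ differ by the image of $(Y_{n+1}^{\phi(N+1)})_*$, and a diagram chase with \cref{prop:cofiber1} and \cref{prop:cofiber2} (identifying all connecting maps with the canonical ones) shows that the corresponding images $\delta(x_0) \in (Y_{\phi(N+1)}^\infty)_{*-1}$ of $x_0$ under the connecting map of $Y_{\phi(N+1)}^\infty \to Y_{\phi(N)}^\infty \to Y_{\phi(N)}^{\phi(N+1)}$ range over a coset $w_0 + \ker\bigl(i^{\phi(N+1)-n-1}\colon (Y_{\phi(N+1)}^\infty)_{*-1}\to (Y_{n+1}^\infty)_{*-1}\bigr)$. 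Since $i^{m-\phi(N+1)}(\alpha)$ lies in that coset --- it maps to $k(\hat{x})$ under $i^{\phi(N+1)-n-1}$ --- one may choose the lift with $\delta(x_0) = i^{m-\phi(N+1)}(\alpha)$; because $m \geq \phi(M+1)$, this element is hit by $i^{\phi(M)-\phi(N+1)}$ from $(Y_{\phi(M)}^\infty)_{*-1}$, which places $x_0$ in ${}^\phi Z^{M-N}_N$, i.e.\ makes $x_0$ an $(M-N)$-cycle.

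The main obstacle in both parts is the bookkeeping with \cref{prop:cofiber1} and \cref{prop:cofiber2}: one has to check that the maps $Y_j^k \to Y_i^k$, $Y_i^k \to Y_i^j$, their iterates and the associated connecting maps are all the canonical ones, so that the various long exact sequences splice as claimed and, for part (b), the coset of achievable $\delta(x_0)$ is exactly the one computed --- this is where the uniqueness of maps between triangles is used repeatedly. The remaining loose ends (when $\hat{x}$ is a boundary in the truncated sequence, when a filtration lands on a block boundary such as $n+1 = \phi(N+1)$, or the extreme value $m = \phi(M+1)$) are handled either by passing to the zero class or by running the same arguments with $\infty$ replaced by a finite stage, in the style of \cref{prop:quotientlimitss}.
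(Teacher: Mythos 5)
Your part (b) is, in substance, the paper's own proof: the coset chase you describe (lifts of $\hat{x}$ to $(Y_n^{\phi(N+1)})_*$ differ by the image of $(Y_{n+1}^{\phi(N+1)})_*$, whose image under the connecting map to $(Y_{\phi(N+1)}^\infty)_{*-1}$ is exactly $\ker(i^{\phi(N+1)-n-1})$) is precisely the content of \cref{lem:technicallemmafordiff}, which the paper simply invokes with $\beta=i^{m-\phi(N+1)}(\alpha)$. Apart from a harmless off-by-one --- what you obtain, and what you need for ``$d^i(x)=0$ for $i\le M-N$'', is that the image of $x$ in $(Y_{\phi(N+1)}^\infty)_{*-1}$ is hit from $(Y_{\phi(M+1)}^\infty)_{*-1}$, i.e.\ $x\in{}^\phi Z^{M-N+1}_N$ rather than ${}^\phi Z^{M-N}_N$, which your hypothesis $\phi(M+1)\le m$ does give --- this half is correct.

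Part (a) has a genuine gap. In the cross-block case you apply \cref{prop:totaltotruncated} and conclude ``$d^{M'-N}(x)\neq 0$'', but that proposition only produces a nonzero gathered differential on \emph{some} class of $(Y_{\phi(N)}^{\phi(N+1)})_*$ represented by $\hat{x}$: the class is manufactured in its proof via \cref{lem:technicallemmafordiff}, and it need not be your given $x$. Two classes represented by the same $\hat{x}$ differ by an element coming from $(Y_{n+1}^{\phi(N+1)})_*$, which the gathered $E^1$-term does not see, and they can have different differential behaviour --- this is exactly the ambiguity the paper illustrates with the $H\Z$-tower example after \cref{prop:truncatedtototal}. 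So no contradiction with ``$d^i(x)=0$ for $i\le M-N$'' is reached, and repairing the step essentially forces you back to the paper's direct argument in diagram \eqref{eq:prooftotaltotruncated}: since $\hat{x}$ represents $x$ it lifts to $\tilde{x}\in(Y_n^{\phi(N+1)})_*$ with image $x$, so $f(\hat{x})=i^{\phi(N+1)-n-1}$ applied to the image of $x$ in $(Y_{\phi(N+1)}^\infty)_{*-1}$; the cycle hypothesis lifts that image to $(Y_{\phi(M+1)}^\infty)_{*-1}$, hence so lifts $f(\hat{x})$, and all the differentials in question vanish at once (this also disposes of the within-block case without worrying whether the transported target of \cref{prop:totalandsmall} stays nonzero in the truncated page). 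Finally, the endpoint $m=\phi(M+1)$ that you defer cannot be closed by a ``separate word'': a nonzero $d^{\phi(M+1)-n}$ in $(\mathcal{B})$ lands in block $M+1$ and corresponds to a $d^{M+1-N}$, which the hypothesis does not forbid (take $\phi=\mathrm{id}$: the endpoint claim would assert that every $(M-N)$-cycle is an $(M-N+1)$-cycle). What the lifting argument gives, and all that is used later in the paper (for infinite cycles), is $d^{m-n}(\hat{x})=0$ for $n<m<\phi(M+1)$.
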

\begin{proof}
  Let $M'$ be such that $N < M' \leq M$ and let $m$ be such that $n < m$ and $\phi(M') \leq m < \phi(M' + 1)$. For the first claim, we work in Diagram~\eqref{eq:prooftotaltotruncated} with $M$ replaced by $M'$. 
  Our hypothesis implies that there exists $\tilde{x} \in (Y^{\phi(N+1)}_n)_*$ whose image in $(Y^{\phi(N+1)}_{\phi(N)})_*$ is $x$ and in $(Y^{n+1}_n)_*$ is $\hat{x}$; moreover, the image of $x$ in $(Y^\infty_{\phi(N+1)})_{*-1}$ is of the form $i^{\phi(M') - \phi(N+1)}(\alpha)$ for some $\alpha \in (Y^\infty_{\phi(M')})_{*-1}$, and $\alpha$ maps to zero in $(Y^{\phi(M'+1)}_{\phi(M')})_{*-1}$.

  Having image zero in $(Y^{\phi(M'+1)}_{\phi(M')})_{*-1}$ implies that $\alpha$ lifts to $\beta \in (Y^\infty_{\phi(M'+1)})_{*-1}$. Since $m < \phi(M' +1)$, $\beta$ maps to $\tilde{\beta} \in (Y^\infty_m)_{*-1}$ which is a lift of $\alpha$. We have populated the diagram as follows:
  \begin{center}
    \begin{tikzcd}
      \hat{x} \dar[mapsto] & \tilde{x} \lar[mapsto] \rar[mapsto] \dar[mapsto] & x \dar[mapsto] \\
      i^{m - n -1}(\tilde{\beta}) & i^{m - \phi(N+1)}(\tilde{\beta}) \lar[mapsto] \rar[mapsto] & i^{\phi(M') - \phi(N+1)}(\alpha) \\
      \tilde{\beta} \uar[mapsto] \dar[mapsto] & \tilde{\beta} \uar[mapsto] \dar[mapsto] \lar[mapsto] \rar[mapsto] & \alpha \uar[mapsto] \dar[mapsto] \\
      \gamma & \epsilon \lar[mapsto] \rar[mapsto] & 0
    \end{tikzcd}
  \end{center}
  with $\gamma \in (Y^{m+1}_m)_{*-1}$ and $\epsilon \in (Y^{\phi(M' + 1)}_m)_{*-1}$. We have to prove that $\gamma = 0$. But since $\tilde{\beta}$ comes from $\beta \in (Y^\infty_{\phi(M'+1)})_{*-1}$, $\tilde{\beta}$ lifts to $(Y^\infty_{m+1})_{*-1}$ and thus maps to $\gamma = 0$ in $(Y^{m+1}_m)_{*-1}$. Therefore, claim 1 is proven.

  For the second claim, we can populate Diagram~\eqref{eq:prooftotaltotruncated} as follows:
  \begin{center}
    \begin{tikzcd}
      \hat{x} \dar[mapsto] & & \\
      i^{m - n -1}(\alpha) & i^{m - \phi(N+1)}(\alpha) \lar[mapsto] \rar[mapsto] & i^{m - \phi(N+1)}(\alpha) \\
      \alpha \uar[mapsto] \dar[mapsto] & \alpha \uar[mapsto] \dar[mapsto] \lar[mapsto] \rar[mapsto] & i^{m - \phi(M)}(\alpha) \uar[mapsto] \dar[mapsto] \\
      0 & \epsilon \lar[mapsto] \rar[mapsto] & \gamma
    \end{tikzcd}
  \end{center}
  We use a similar argument to prove that $\gamma = 0$: since $\phi(M+1) \leq m$, $i^{m - \phi(M)}(\alpha) \in (Y^\infty_{\phi(M)})_{*-1}$ lifts to $i^{m - \phi(M+1)}(\alpha) \in (Y^\infty_{\phi(M+1)})_{*-1}$ so that it maps to $\gamma = 0$ in $(Y^{\phi(M+1)}_{\phi(M)})_{*-1}$. Then, using Lemma~\ref{lem:technicallemmafordiff}, we fill the top of the diagram, and we get $x \in (Y^{\phi(N+1)}_{\phi(N)})_*$, an $(M-N)$-cycle of $({}^\phi \mathcal{B})$ represented by $\hat{x}$.
\end{proof}

\section{Topological Hochschild homology of $ku$}
\label{chap:thhkucomplet}

Here we finish the computation of $\THH_*(ku)$. We will see that $u^{p-2}\sigma u$ is indeed a class of $\THH_{2p-1}(ku;ku/v_1)$ that can be compared to $\sigma v_1 \in \THH_*(\ell;H\Z_{(p)})$. We compute $\THH_*(ku;ku/v_1)$ in Section~\ref{section:thhkuv1} using a comparison between the Brun spectral sequences \eqref{ss:lZ} and \eqref{ss:uTB} and the truncated Bockstein spectral sequence \eqref{ss:uT} -- which has fewer classes and is easier to track.

The techniques we developed in Section~\ref{chap:truncated} can then be used to determine the $u$-Bockstein spectral sequence for $ku$, which is done in Section~\ref{section:thhku}. We can compare the $v_1$-Bockstein spectral sequences \eqref{ss:l} and \eqref{ss:v1}, and the Bockstein spectral sequence \eqref{ss:u} can be recovered from the truncated Bockstein spectral sequence \eqref{ss:uT} and the gathered Bockstein spectral sequence \eqref{ss:v1}.

Lastly, the extensions can be computed from the differentials and bidegree constraints of the Bockstein spectral sequence \eqref{ss:u}, thus determining $\THH_*(ku)$ as a $ku_*$-module.

Our $S$-algebra model for the quotient of $ku$ by $v_1$ will be
\begin{equation*}
  ku/v_1 = ku\wedge_\ell H\Z_{(p)}
\end{equation*}
which can also be assumed to be a $q$-cofibrant commutative $S$-algebra by Remark VII.6.8 of \cite{elmendorf1997rings}.

\subsection{Computation of $\THH_*(ku;ku/v_1)$}
\label{section:thhkuv1}

We will now compute $\THH_*(ku;ku/v_1)$ using both a Brun spectral sequence and a Bockstein spectral sequence. The following results allow us to compute the first page of the Brun spectral sequences.
\begin{lemma}
  \begin{enumerate}
  \item   $(ku/v_1\wedge_{ku} ku/v_1)_*\cong P_{p-1}(u)\otimes E(\sigma v_1)$ over $\Z_{(p)}$ with $|u|=2$ and $|\sigma v_1| = 2p-1$.
  \item   $V(0)_*(ku/v_1\wedge_{ku} ku/v_1)\cong P_{p-1}(u)\otimes E(\sigma v_1)$ over $\F_p$ with $|u|=2$ and $|\sigma v_1| = 2p-1$.
  \item $\THH_*(ku/v_1;H\Z_{(p)})\cong \THH_*(H\Z_{(p)})\otimes E(\sigma u) \otimes \Gamma(\varphi u)$ over $\Z_{(p)}$ with $|\sigma u|=3$ and $|\varphi u|=2p$.
  \item $\THH_*(ku/v_1;H\F_p)\cong V(0)_*\THH(H\Z_{(p)})\otimes E(\sigma u) \otimes \Gamma(\varphi u)$ over $\F_p$ with $|\sigma u|=3$ and $|\varphi u|=2p$.
  \end{enumerate}
\end{lemma}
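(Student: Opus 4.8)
\emph{(a) and (b).} The plan is to compute $(ku/v_1\wedge_{ku}ku/v_1)_*$ with the Künneth spectral sequence $\operatorname{Tor}^{ku_*}(\pi_*(ku/v_1),\pi_*(ku/v_1))\Rightarrow \pi_*(ku/v_1\wedge_{ku}ku/v_1)$. Since $v_1=u^{p-1}$ is a non-zero-divisor in $ku_*=\Z_{(p)}[u]$, we get $\pi_*(ku/v_1)=P_{p-1}(u)$, which has a length-one Koszul resolution over $ku_*$; hence $\operatorname{Tor}_0=P_{p-1}(u)$, $\operatorname{Tor}_1=P_{p-1}(u)$ on a generator $\sigma v_1$ of internal degree $2(p-1)$, and $\operatorname{Tor}_{\geq2}=0$. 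The $E^2$-page $P_{p-1}(u)\otimes E(\sigma v_1)$ is concentrated in two homological degrees, so there is no differential, and the two columns occupy disjoint total degrees, so there is no additive extension; this gives (a). For (b), $P_{p-1}(u)\otimes E(\sigma v_1)$ is degreewise $\Z_{(p)}$-free, so multiplication by $p$ is injective on $\pi_*(ku/v_1\wedge_{ku}ku/v_1)$, and the cofiber sequence of $\cdot p$ shows $V(0)_*(ku/v_1\wedge_{ku}ku/v_1)$ is its reduction mod $p$.

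\emph{(c).} First I record that $H\Z_{(p)}$ is a $ku/v_1$-algebra: the Postnikov truncation $ku\to H\Z_{(p)}$ kills $u$, hence $v_1$, and so factors through $ku/v_1$. Then I would run the Brun--Höning spectral sequence with $A=ku/v_1$, $B=H\Z_{(p)}$ and $E=S$, in the form of \eqref{ss:uZ}:
\[
\THH_n(H\Z_{(p)};H\pi_m(H\Z_{(p)}\wedge_{ku/v_1}H\Z_{(p)}))\Rightarrow \THH_{n+m}(ku/v_1;H\Z_{(p)}).
\]
Its coefficient ring $\pi_*(H\Z_{(p)}\wedge_{ku/v_1}H\Z_{(p)})\cong\operatorname{Tor}^{P_{p-1}(u)}(\Z_{(p)},\Z_{(p)})$ is the classical $\operatorname{Tor}$ of a truncated polynomial algebra, $E(\sigma u)\otimes\Gamma(\varphi u)$ with $|\sigma u|=|u|+1=3$ and $|\varphi u|=(p-1)|u|+2=2p$; the Künneth spectral sequence computing it collapses because for $p$ odd no two of the monomials $\sigma u^{\epsilon}\gamma_j\varphi u$ (total degree $3\epsilon+2jp$) have total degrees differing by $1$, and there is at most one monomial per total degree, so there is no extension. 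As this module is $\Z_{(p)}$-free, the Brun $E^2$-page is $\THH_*(H\Z_{(p)})\otimes E(\sigma u)\otimes\Gamma(\varphi u)$. The classes $\sigma u,\varphi u$ lie in filtration $0$ and are permanent cycles, so by multiplicativity every differential is determined by its restriction to $\THH_*(H\Z_{(p)})$, and the bidegree-admissible ones (the first being a $d^4$ of the shape $\mu_k\mapsto\sigma u\cdot\mu_{k-2}$) all vanish for $p$ odd by the same numerical argument that makes \eqref{ss:uZ} collapse ($p\mid k$ forces $p\nmid k-2$). When $p=2$, $ku/v_1\simeq H\Z_{(2)}$ and all four statements degenerate, reading $E(\sigma u)\otimes\Gamma(\varphi u)=\Z_{(2)}$.

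\emph{(d) and the extensions in (c).} For (d) I would run the same spectral sequence with $B=H\F_p$ (a $ku/v_1$-algebra via $ku/v_1\to H\Z_{(p)}\to H\F_p$); the coefficient ring is now $\operatorname{Tor}^{P_{p-1}(u)}(\F_p,\F_p)=E(\lambda)\otimes E(\sigma u)\otimes\Gamma(\varphi u)$, the change of rings along $\Z_{(p)}\to\F_p$ contributing the degree-$1$ Bockstein class $\lambda$, and $\THH_*(H\F_p)=\F_p[\mu_2]$ (Bökstedt), so $E^2=\F_p[\mu_2]\otimes E(\lambda)\otimes E(\sigma u)\otimes\Gamma(\varphi u)$. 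Again $\sigma u,\varphi u,\lambda$ are permanent cycles, while $\mu_2$ must support a differential (since $\THH_2(ku/v_1;H\F_p)$ is forced to vanish), necessarily $d^2(\mu_2)=\lambda$ up to a unit; Leibniz then gives $E^\infty=\F_p[\mu_2^p]\otimes E(\mu_2^{p-1}\lambda)\otimes E(\sigma u)\otimes\Gamma(\varphi u)$, where $\F_p[\mu_2^p]\otimes E(\mu_2^{p-1}\lambda)$ is the known Brun computation of $V(0)_*\THH(H\Z_{(p)})$, and there are no extensions as everything is an $\F_p$-vector space. Finally I would feed (d) back into (c): the universal-coefficient sequence for $H\Z_{(p)}\xrightarrow{p}H\Z_{(p)}\to H\F_p$ gives $\dim_{\F_p}\THH_n(ku/v_1;H\F_p)=\dim\THH_n(ku/v_1;H\Z_{(p)})/p+\dim\THH_{n-1}(ku/v_1;H\Z_{(p)})[p]$, and comparing with the $\F_p$-dimension of $\THH_*(H\Z_{(p)})\otimes E(\sigma u)\otimes\Gamma(\varphi u)$ computed as if the Brun filtration of (c) split leaves no room for a nontrivial additive extension, upgrading the $E^\infty$-page of (c) to the stated $\Z_{(p)}$-module isomorphism.

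The main obstacle is exactly this last step. The $E^\infty$-page of the Brun spectral sequence for (c) stacks several $p$-power-torsion summands in single total degrees (for instance $\mu_{2p}$ and $\mu_p\varphi u$ in degree $4p-1$), and nothing internal to that spectral sequence rules out a $\Z/p^2$ in place of a $(\Z/p)^2$; the mod-$p$ computation (d) is what forces the splitting. The collapse of the various Künneth and Brun spectral sequences, by contrast, is routine once one has the explicit form of $\THH_*(H\Z_{(p)})$ and observes how sparse the relevant bidegrees are.
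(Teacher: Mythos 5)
Your parts (a) and (b) are the paper's argument verbatim (Künneth over $ku_*$ with the Koszul resolution of $P_{p-1}(u)$, collapse and absence of torsion), and your collapse argument for (c) — coefficients $E(\sigma u)\otimes\Gamma(\varphi u)$ via $\operatorname{Tor}^{P_{p-1}(u)}(\Z_{(p)},\Z_{(p)})$, permanent cycles in filtration $0$, and the congruence $p\mid k\Rightarrow p\nmid k-2$ killing the admissible differentials — is the same sparseness argument the paper uses. Your closing step, resolving the additive extensions in (c) by matching $\F_p$-ranks against the mod~$p$ computation through the universal coefficient sequence, is also exactly the paper's device (neither formulation, yours or the paper's, literally excludes a rank-preserving regrouping of two summands of order $\geq p^2$ in one degree, e.g.\ $\mu_{2p^2}$ and $\mu_{p^2}\gamma_p\varphi u$ in degree $4p^2-1$, but that is a weakness you share with the paper rather than introduce). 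Where you genuinely diverge is (d): the paper runs a second, collapsing Brun-type spectral sequence with mod~$p$ coefficients over $B=H\Z_{(p)}$, whereas you run the Brun sequence with $B=H\F_p$, correctly keeping the extra Bockstein class $\lambda$ in $\pi_*(H\F_p\wedge_{ku/v_1}H\F_p)\cong E(\lambda)\otimes E(\sigma u)\otimes\Gamma(\varphi u)$ (your account of these coefficients is in fact more careful than the paper's), and you recover $V(0)_*\THH(H\Z_{(p)})$ from the differential $d^2(\mu_2)=\lambda$.

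The gap is in how you produce that differential. You assert that $\mu_2$ must support a differential ``since $\THH_2(ku/v_1;H\F_p)$ is forced to vanish,'' but nothing in your argument establishes this vanishing: it does not follow from the $E^2$-page you are analysing, and you cannot import it from (c), because you use (d) precisely to settle the extensions in (c) — that would be circular. Without $d^2(\mu_2)=\lambda$ your $E^\infty$ would be too big and (d), hence your resolution of the extensions in (c), collapses. The statement is true and the fix is standard: the algebra map $ku/v_1\to H\Z_{(p)}$ induces a map of Brun spectral sequences (same $B=H\F_p$) to the classical one computing $\THH_*(H\Z_{(p)};H\F_p)$, where Brun's computation gives $d^2(\mu_2)=\lambda$; the comparison is the identity on $\THH_*(H\F_p)$ and an isomorphism on the $\F_p\{\lambda\}$ in bidegree $(0,1)$, so the differential is forced in your spectral sequence as well. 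With that in place, your Leibniz computation of $E^3$ is fine, but you should also record the (easy, sparseness) check that $E^3=E^\infty$: the remaining generators $\mu_2^p$, $\mu_2^{p-1}\lambda$, $\sigma u$, $\gamma_{p^i}\varphi u$ admit no bidegree-admissible targets from $E^3$ on.
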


\begin{proof}
  The Künneth spectral sequence computing $(ku/v_1\wedge_{ku} ku/v_1)_*$ has $E^2$-page $\operatorname{Tor}^{P(u)}_{*,*}(P_{p-1}(u),P_{p-1}(u)) = P_{p-1}(u)\otimes E(\sigma v_1)$ with $|u|=(0,2)$ and $|\sigma v_1|=(1,2p-2)$. For degree reasons, the spectral sequence collapses with no possible extensions, yielding claim 1. Claim 2 follows from the absence of $p$-torsion.

  We use the Brun spectral sequence to compute $\THH_*(ku/v_1;H\Z_{(p)})$ and $\THH_*(ku/v_1;H\F_p)$:
  \begin{equation*}
    \THH_*(H\Z_{(p)}; H(H\Z_{(p)}\wedge_{ku/v_1} H\Z_{(p)})_*) \Rightarrow \THH_*(ku/v_1;H\Z_{(p)})
  \end{equation*}
  \begin{equation*}
    \THH_*(H\F_p; H(H\F_p\wedge_{ku/v_1} H\F_p)_*) \Rightarrow \THH_*(ku/v_1;H\F_p) .
  \end{equation*}

  The Künneth spectral sequence computing $(H\Z_{(p)}\wedge_{ku/v_1} H\Z_{(p)})_*$ has $E^2$-page $\operatorname{Tor}^{P_{p-1}(u)}_{*,*}(\Z_{(p)},\Z_{(p)})\cong E(\sigma u)\otimes \Gamma(\varphi u)$ with $|\sigma u|=(1,2)$ and $|\varphi u|=(2,2p-2)$. The indecomposables are $\sigma u$ and the divided power $\gamma_{p^i}\varphi u$. For bidegree reasons, they cannot support non-zero differentials, so the spectral sequence collapses with no possible extensions, and we have $(H\Z_{(p)}\wedge_{ku/v_1}H\Z_{(p)})_*\cong E(\sigma u) \otimes \Gamma(\varphi u)$. A similar argument yields $(H\F_p\wedge_{ku/v_1} H\F_p)_*\cong  E(\sigma u) \otimes \Gamma(\varphi u)$, this time over $\F_p$.

  Returning to the Brun spectral sequences, when looking at the bidegrees modulo $2p$, we see that the indecomposables also cannot support non-zero differentials in both the integral and $V(0)$ case, so that the two spectral sequences collapse. The modulo $p$ $E^\infty$-page has exactly the right rank over $\F_p$ to fit into a long exact sequence induced by multiplication by $p$ for the integral $E^\infty$-page. Having an extension in the integral spectral sequence would then mean that there is a non-zero differential in the modulo $p$ one. We conclude that there is no extension in the integral spectral sequence, and claim 3 is proven. The modulo $p$ spectral sequence cannot have extensions, and we get claim 4.
\end{proof}

We can write the following two spectral sequences computing $\THH$ of $ku$ with coefficients in $ku/v_1$. The first one, \eqref{ss:uT}, is a truncated Bockstein spectral sequence:
\begin{gather}
  E^1_{*,*} = \THH_*(ku;H\Z_{(p)})\botimes P_{p-1}(u) \cong \THH_*(H\Z_{(p)})\otimes E(\sigma u) \botimes P_{p-1}(u) \notag \\
  \Rightarrow \THH_*(ku;ku/v_1) \tag{$u_T$}; \label{ss:uT} \\
\end{gather}
The second one, \eqref{ss:uTB}, is a Brun spectral sequence:
\begin{gather}
  E^2_{*,*} = \THH_*(ku/v_1;H(ku/v_1\wedge_{ku}ku/v_1)_*) \notag \\
  \cong\THH_*(H\Z_{(p)}) \otimes E(\sigma u)\otimes \Gamma(\varphi u) \botimes E(\sigma v_1) \otimes P_{p-1}(u) \notag \\
  \Rightarrow \THH_*(ku;ku/v_1) \tag{$u_{TB}$}. \label{ss:uTB} \\
\end{gather}
The bidegrees are:
\begin{equation*}
  \begin{aligned}
    & |\sigma u| = (3,0) \\
    & |\varphi u| = (2p,0) \\
    & |\mu_{kp}| = (2kp-1,0) \mbox{, $k\geq 1$ the generators of $\THH_*(H\Z_{(p)})$} \\
    & |u| = (0,2) \\
    & |\sigma v_1| = (0,2p-1).
  \end{aligned}
\end{equation*}

For the following lemma, we will briefly use the non-truncated $u$-Bockstein spectral sequence ($u$) computing $\THH_*(ku)$ that we will study in the next section. It links the class $\sigma v_1$ of $\THH_*(\ell)$ to a class of $\THH_*(ku)$.
\begin{lemma}\label{prop:imagesigmav1}
  The map $\THH_*(\ell)\rightarrow \THH_*(ku)$ sends $\sigma v_1$ to a non-zero class represented up to a unit by $u^{p-2}\sigma u$ in the Bockstein spectral sequence \eqref{ss:u} computing $\THH_*(ku)$.

  The class $\mu_p \in \THH_*(ku; H\Z_{(p)})$ is an infinite cycle in \eqref{ss:u} and there is a relation $p \cdot \mu_p = u^{p-2} \sigma u$ in $\THH_*(ku)$.
\end{lemma}
\begin{proof}
  For degree reasons, the classes $u^{p-2} \sigma u, \mu_p \in \THH_*(ku; H\Z_{(p)})$ are infinite cycles in the Bockstein spectral sequence \eqref{ss:u}, and they are the only generators in degree $2p -1$. There is a commutative diagram
  \begin{center}
    \begin{tikzcd}
      \THH_*(\ell) \rar \dar & \THH_*(ku) \dar \\
      \THH_*(\ell; H\Z_{(p)}) \rar & \THH_*(ku; H\Z_{(p)})
    \end{tikzcd}
  \end{center}
  where we can push $\mu_p$. The image of the element $\mu_p$ of $\THH_*(\ell; H\Z_{(p)})$ is $\mu_p$ in $\THH_*(ku; H\Z_{(p)})$ from comparing the Brun spectral sequences computing $\THH_*(\ell; H\Z_{(p)})$ and $\THH_*(ku; H\Z_{(p)})$. Thus, the image of $\mu_p$ in $\THH_*(ku)$ must be represented by $\mu_p \in \THH_*(ku; H\Z_{(p)})$ in the Bockstein spectral sequence \eqref{ss:u}. Moreover, from the description of $V(0)_*\THH(ku)$ found in Theorem 7.9 of \cite{ausoni2005topological}, we see that $V(0)_{2p -1}\THH(ku) = \F_{p}$ so that $\THH_{2p -1}(ku)$ must be cyclic. The only possibility is that there is an extension $p \cdot \mu_p = u^{p -2} \sigma u$ up to a unit, and since we have $p \cdot \mu_p = \sigma v_1$ in $\THH_*(\ell)$, the image of $\sigma v_1$ in $\THH_*(ku)$ is $u^{p-2} \sigma u$ up to a unit.
\end{proof}

Since $\ell/v_1$ is just $H\Z_{(p)}$, we have a morphism between the Brun spectral sequences $\eqref{ss:lZ}\rightarrow \eqref{ss:uTB}$ induced by the inclusion of the summand $i:\ell\rightarrow ku$. This allows us to prove:
\begin{proposition}\label{prop:diffin6}
  In \eqref{ss:uTB}, there are differentials
  \begin{equation*}
    d^{2p-3}(\gamma_k\varphi u)=u^{p-2}\sigma u\gamma_{k-1}\varphi u
  \end{equation*}
  up to a unit for all $k\geq 1$.
\end{proposition}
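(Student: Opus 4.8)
The main tool is the morphism of Brun spectral sequences $\eqref{ss:lZ}\rightarrow\eqref{ss:uTB}$ induced by $i\colon\ell\to ku$ and the compatible map $\ell/v_1\simeq H\Z_{(p)}\to ku/v_1$. First I would identify this morphism on $E^2$-pages. On the coefficient factor it is the map $\pi_*(H\Z_{(p)}\wedge_\ell H\Z_{(p)})=E(\sigma v_1)\to\pi_*(ku/v_1\wedge_{ku}ku/v_1)=P_{p-1}(u)\otimes E(\sigma v_1)$; since $v_1=u^{p-1}$ in $ku_*$, comparing the evident free resolutions over $\ell_*$ and $ku_*$ shows it sends $\sigma v_1\mapsto\sigma v_1$ and $1\mapsto1$. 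On the outer factor it is the map $\THH_*(H\Z_{(p)})\to\THH_*(ku/v_1;H\Z_{(p)})=\THH_*(H\Z_{(p)})\otimes E(\sigma u)\otimes\Gamma(\varphi u)$ induced by $\ell/v_1\to ku/v_1$, which is split injective because $ku/v_1=ku\wedge_\ell H\Z_{(p)}$ retracts, through a ring map, onto $ku/u\simeq H\Z_{(p)}$. In particular the image of $\eqref{ss:lZ}$ in $\eqref{ss:uTB}$ is (essentially) the subalgebra $\THH_*(H\Z_{(p)})\botimes E(\sigma v_1)$ sitting inside $\THH_*(H\Z_{(p)})\otimes E(\sigma u)\otimes\Gamma(\varphi u)\botimes E(\sigma v_1)\otimes P_{p-1}(u)$.

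By \cref{prop:diffin4}, $\eqref{ss:lZ}$ has its only differentials in page $2p-1$, namely $d^{2p-1}(\mu_{(k+1)p})=p^{\nu(k)}\sigma v_1\mu_{kp}$ for $k\geq1$. Because $\mu_{(k+1)p}$ lies on the $x$-axis it cannot be a boundary in $\eqref{ss:uTB}$, and commutation of the morphism with the vanishing lower differentials of $\eqref{ss:lZ}$ forces it to survive to page $2p-1$ there; a short bidegree check — using that the $y$-coordinates occurring in $\eqref{ss:uTB}$ lie only in $\{0,2,\dots,2p-4\}\cup\{2p-1,2p+1,\dots\}$ — shows the image of the target survives as well. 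Naturality then yields the corresponding $d^{2p-1}$ differentials in $\eqref{ss:uTB}$. This controls the spectral sequence on the image of the morphism, but the class $\varphi u$, in bidegree $(2p,0)$, is \emph{not} in that image (there is nothing in degree $2p$ in $\THH_*(H\Z_{(p)})$), so its differential must be produced by a separate argument.

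To obtain $d^{2p-4}(\varphi u)=u^{p-2}\sigma u$ up to a unit — the case $k=1$, with $\gamma_0\varphi u=1$ — I would bring in $\eqref{ss:uT}$, which converges to the same group $\THH_*(ku;ku/v_1)$ but involves neither $\varphi u$ nor $\sigma v_1$ and is therefore much smaller. Computing its differentials together with the differentials just obtained in $\eqref{ss:uTB}$ determines $\THH_*(ku;ku/v_1)$ in the relevant total degrees (most cleanly after reducing everything mod $p$); a rank count degree by degree then forces each $\gamma_k\varphi u$ to support a nonzero differential in $\eqref{ss:uTB}$, whose only admissible target by bidegree is $u^{p-2}\sigma u\gamma_{k-1}\varphi u$, reached by a $d^{2p-4}$ — one also has to check that this target has survived to page $2p-4$ and is not already a boundary. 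For $p\nmid k$ the general case also follows formally from $k=1$: all lower differentials vanish on $E(\sigma u)\otimes\Gamma(\varphi u)$ for degree reasons, so $d^{2p-4}$ is a divided-power derivation on $E^{2p-4}$, and the identity $k\,\gamma_k\varphi u=\varphi u\cdot\gamma_{k-1}\varphi u$ unwinds inductively to $d^{2p-4}(\gamma_k\varphi u)=u^{p-2}\sigma u\gamma_{k-1}\varphi u$.

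I expect the main obstacle to be the core of the third step: the $\varphi u$-differential is genuinely new — it is not transported along $\eqref{ss:lZ}\rightarrow\eqref{ss:uTB}$ — so it has to be squeezed out of the interplay between $\eqref{ss:uT}$, $\eqref{ss:uTB}$ and the transported $d^{2p-1}$'s. The delicate points are making the rank count precise in every total degree and ruling out that some $\gamma_k\varphi u$, or its putative target $u^{p-2}\sigma u\gamma_{k-1}\varphi u$, is killed by a shorter differential; the identification of the $E^2$-map in the first step, in particular its split injectivity, is exactly what makes the transported differentials available for this bookkeeping.
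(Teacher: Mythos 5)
The first and last steps of your plan (transporting the $d^{2p-1}$-differentials along $\eqref{ss:lZ}\rightarrow\eqref{ss:uTB}$, and propagating the case $k=1$ through the divided power identity $k\,\gamma_k\varphi u=\varphi u\,\gamma_{k-1}\varphi u$) are sound and agree with what the paper does. The gap is exactly where you anticipated it: the core step producing $d^{2p-4}(\varphi u)=u^{p-2}\sigma u$ is not established by your argument. First, your claim that the only bidegree-admissible target of a differential on $\gamma_k\varphi u$ is $u^{p-2}\sigma u\,\gamma_{k-1}\varphi u$ is false: $\gamma_k\varphi u$ sits in bidegree $(2kp,0)$, and $\sigma v_1\gamma_{k-1}\varphi u$ in bidegree $(2(k-1)p,\,2p-1)$ is an equally admissible target, reached by a $d^{2p-1}$ (for $k=1$ this is the alternative $d^{2p-1}(\varphi u)=\sigma v_1$). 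Second, a rank count cannot break this ambiguity: the two scenarios kill the same number of free (and torsion) classes in each total degree and differ only in \emph{which filtration} detects the surviving copy of $\Z_{(p)}$ in degree $2kp-1$, i.e.\ in hidden extension data of the abutment, which graded ranks do not see. Third, the input you propose to use to make the count — the differentials of $\eqref{ss:uT}$ — is not available at this stage: in the paper those differentials (\cref{prop:diffin5}) are \emph{deduced from} the present proposition, so invoking them here is circular, and without them $\eqref{ss:uT}$ only bounds the abutment up to exactly the extension ambiguity described above.

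The paper resolves the ambiguity with an input your proposal never uses: \cref{prop:imagesigmav1}, which rests on the compatibility of the splittings $\THH(L)_p\simeq(L\vee\Sigma L_\Q)_p$ and $\THH(KU)_p\simeq(KU\vee\Sigma KU_\Q)_p$. It shows that the image of $\sigma v_1\in\THH_{2p-1}(\ell)$ in $\THH_{2p-1}(ku)$ is nonzero and represented by $u^{p-2}\sigma u$; pushing this through the square relating $\THH(\ell)$, $\THH(ku)$, $\THH(\ell;H\Z_{(p)})$ and $\THH(ku;ku/v_1)$ gives $i(\sigma v_1)=u^{p-2}\sigma u$ (up to a unit) in $\THH_{2p-1}(ku;ku/v_1)$. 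This forces the hidden extension $u\cdot u^{p-3}\sigma u=\sigma v_1$ in $\eqref{ss:uTB}$ and hence forces the $E^1$-class $u^{p-2}\sigma u$ to die; being an infinite cycle for degree reasons, it must be a boundary, and $\varphi u$ is the only possible source, giving $d^{2p-4}(\varphi u)=u^{p-2}\sigma u$. Some external argument of this kind (identifying where $i(\sigma v_1)$ lands, or equivalently the $u$-divisibility of that class) is indispensable; without it your bookkeeping between $\eqref{ss:uT}$ and $\eqref{ss:uTB}$ cannot decide between the two admissible differentials on $\varphi u$.
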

\begin{proof}
  In the following commutative diagram:
  \begin{center}
    \begin{tikzcd}
      \THH(\ell) \rar["i"] \dar["f"] & \THH(ku) \dar["f"] \\
      \THH(\ell;H\Z_{(p)}) \rar["i"] & \THH(ku;ku/v_1)
    \end{tikzcd}
  \end{center}
  Up to units we have, using Lemma~\ref{prop:imagesigmav1}
  \begin{equation*}
    f(i(\sigma v_1)) = f(u^{p-2}\sigma u) = u^{p-2} f(\sigma u) = i(f(\sigma v_1)) = i(\sigma v_1) .
  \end{equation*}

  In order for this to be possible, there must be an extension $u\cdot u^{p-3}\sigma u = \sigma v_1$ in \eqref{ss:uTB}, and it must be that $u^{p-2}\sigma u$ is either a boundary or not an infinite cycle. Since it is an infinite cycle for degree reasons, it is a boundary. The only class in degree $2p$ is $\varphi u$, so up to a unit there is a differential $d^{2p-3}(\varphi u)=u^{p-2}\sigma u$ in \eqref{ss:uTB}.

  In the divided power algebra $\Gamma(\varphi u)$, $\varphi u\,\gamma_{k-1} \varphi u = k\,\gamma_k\varphi u$. We can then prove our formula by induction on $k$, using the facts that
  \begin{equation*}
    k\,d(\gamma_k\varphi u) = d(\varphi u) \gamma_{k-1}\varphi u + \varphi u\,d(\gamma_{k-1}\varphi u)
  \end{equation*}
  and that $\Z_{(p)}$ is an integral domain.
\end{proof}

We can now get a description of all the differentials in the truncated Bockstein spectral sequence \eqref{ss:uT}:
\begin{proposition}\label{prop:diffin5}
  In the spectral sequence \eqref{ss:uT}, the differentials are given by the formula:
  \begin{equation*}
    d^{p-2}(\mu_{(k+1)p})=p^{\nu(k)}u^{p-2}\sigma u\,\mu_{kp},\quad k\geq 1
  \end{equation*}
  up to a unit, where $\nu$ is the $p$-adic valuation.
\end{proposition}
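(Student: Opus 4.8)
The plan is to play the two spectral sequences \eqref{ss:uT} and \eqref{ss:uTB} against one another: both abut to $\THH_*(ku;ku/v_1)$, so I can compute that group from the cluttered Brun spectral sequence \eqref{ss:uTB} and then read the differentials of the lean spectral sequence \eqref{ss:uT} off of its abutment.

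First I would finish the computation of \eqref{ss:uTB}. Since $\ell/v_1\simeq H\Z_{(p)}$, the inclusion $\ell\to ku$ induces a map of Brun spectral sequences $\eqref{ss:lZ}\to\eqref{ss:uTB}$ carrying $\mu_{kp}$ to $\mu_{kp}$ and $\sigma v_1$ to $\sigma v_1$. I would check by a total-degree count that $\mu_{(k+1)p}$, sitting on the $x$-axis, is never a boundary and cannot support a differential shorter than the one it carries in \eqref{ss:lZ} — the only candidate target of such a shorter differential would be a multiple of $u^{p-2}\sigma u\mu_{kp}$, which a degree check excludes — so the formula $d^{2p-1}(\mu_{(k+1)p})=p^{\nu(k)}\sigma v_1\mu_{kp}$ of \cref{prop:diffin4} transports verbatim to \eqref{ss:uTB}. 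Together with $d^{2p-4}(\gamma_k\varphi u)=u^{p-2}\sigma u\gamma_{k-1}\varphi u$ from \cref{prop:diffin6}, the Leibniz rule, and the fact that $u$, $\sigma u$, $\sigma v_1$ are permanent cycles for bidegree reasons, these two families exhaust the differentials of \eqref{ss:uTB} (again a degree count); after ruling out hidden extensions — for which one sees, as in the computation of $\THH_*(ku/v_1;H\Z_{(p)})$, that the mod $p$ $E^\infty$-page has exactly the right rank to fit a Bockstein long exact sequence — this pins down $\THH_*(ku;ku/v_1)$ as a graded $\Z_{(p)}$-module, in particular the order of every torsion group.

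Next I would analyse \eqref{ss:uT}. Here $u$, $\sigma u$, $\mu_0=1$ and $\mu_p$ are permanent cycles (bidegree reasons), so each $d^r$ is $P_{p-1}(u)\otimes E(\sigma u)$-linear and is determined by the classes $d^r(\mu_{(k+1)p})$ for $k\geq 1$. A degree count shows that the only class in position to receive a nonzero differential out of $\mu_{(k+1)p}$ is $u^{p-2}\sigma u\mu_{kp}$, reached by a $d^{2p-4}$; hence $d^{2p-4}(\mu_{(k+1)p})=c_k\,u^{p-2}\sigma u\mu_{kp}$ for some $c_k$, and all the remaining differentials of \eqref{ss:uT} vanish — for instance $u^i\mu_{jp}$ with $i\geq 1$ is a permanent cycle because $d^{2p-4}(u^i\mu_{jp})=u^i\,d^{2p-4}(\mu_{jp})$ is a multiple of $u^{p-2+i}\sigma u$, which is zero in $P_{p-1}(u)$. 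Thus the $E^\infty$-page of \eqref{ss:uT} is governed entirely by the constants $c_k$. Comparing it, degree by degree, with the value of $\THH_*(ku;ku/v_1)$ found above — it suffices to look in degrees $2(k+1)p-1$ and $2(k+1)p-2$, where $\mu_{(k+1)p}$ and $u^{p-2}\sigma u\mu_{kp}$ are (up to free summands) the only generators — forces $\min(\nu(c_k),\nu(k)+1)=\nu(k)$, hence $\nu(c_k)=\nu(k)$; since $\Z_{(p)}$ is a discrete valuation ring this means $c_k$ is a unit times $p^{\nu(k)}$, as claimed.

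The step I expect to be the main obstacle is the first one: the $\Gamma(\varphi u)\otimes E(\sigma v_1)$ factor makes the $E^1$-page of \eqref{ss:uTB} large, so verifying that the two known families of differentials together with their Leibniz consequences really do account for everything, and extracting the precise torsion orders of $\THH_*(ku;ku/v_1)$ (with the even-degree extension problem), requires care. Once that group is in hand, the behaviour of \eqref{ss:uT} is essentially forced.
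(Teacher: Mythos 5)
Your overall strategy is the same as the paper's: determine (enough of) the abutment $\THH_*(ku;ku/v_1)$ from the Brun spectral sequence \eqref{ss:uTB}, note that bidegree constraints leave $d^{2p-4}(\mu_{(k+1)p})=c_k\,u^{p-2}\sigma u\,\mu_{kp}$ as the only possible pattern in \eqref{ss:uT}, and let the abutment force $\nu(c_k)=\nu(k)$. The second and third steps of your plan are essentially the paper's proof (the paper in fact only needs the odd degrees $2kp-1$, where a single class of \eqref{ss:uT} contributes, so no extension questions arise and the order $p^{\nu(k)}$ of $\THH_{2kp-1}(ku;ku/v_1)$ alone pins down $c_k$).

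However, two justifications in your first step are wrong as stated. First, the claim that $\mu_{(k+1)p}$ ``cannot support a differential shorter than the one it carries in \eqref{ss:lZ}'' because ``a degree check excludes'' the candidate target $u^{p-2}\sigma u\mu_{kp}$: the degree check excludes nothing here. Since $P_{p-1}(u)$ is a tensor factor of the fiber axis of \eqref{ss:uTB}, the bidegree of $u^{p-2}\sigma u\,\mu_{kp}$ is exactly that of a $d^{2p-4}$-target of $\mu_{(k+1)p}$ --- this is precisely the differential pattern you are trying to establish in \eqref{ss:uT} --- and there are further candidates such as $u^{p-2}\sigma u\,\gamma_j\varphi u\,\mu_{(k-j)p}$. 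What actually rules out a shorter differential is naturality along the morphism of exact couples $\eqref{ss:lZ}\rightarrow\eqref{ss:uTB}$: the image of a $(2p-1)$-cycle is a $(2p-1)$-cycle, so $\mu_{(k+1)p}$ survives to the page where \cref{prop:diffin4} applies. (The paper sidesteps even this by never deciding the value of $d^{2p-4}(\mu_{(k-i)p})$ in \eqref{ss:uTB}: it only observes that whatever that value is, each $\gamma_i\varphi u\,\mu_{(k-i)p}$ supports a nonzero $d^{2p-4}$, and then counts survivors in total degree $2kp-1$.) Second, your proposed mod-$p$ rank argument to ``rule out hidden extensions'' in \eqref{ss:uTB} cannot succeed, because there \emph{are} hidden extensions in that spectral sequence --- e.g.\ the multiplicative extension $u\cdot u^{p-3}\sigma u=\sigma v_1$ used in the proof of \cref{prop:diffin6}, and the additive extension assembling the free group $\Z_{(p)}\{\mu_p\}$ in degree $2p-1$ out of $\mu_p$ and classes of higher filtration (ultimately $p\cdot\mu_p=u^{p-2}\sigma u$). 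This does not sink your plan, since the final comparison only requires the orders of the groups, which are insensitive to extensions, and in the key odd degrees a single class survives; but as written that part of your step 1 is incorrect and should be replaced by an order count, as in the paper.
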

\begin{proof}
  The differentials given are the only possible ones in \eqref{ss:uT} for bidegree reasons; we only need to prove that they are indeed non-zero. We now know enough about \eqref{ss:uTB} to do so.

  By looking at the degrees modulo $2p$, we can list the classes of total degree $2kp - 1$ in $E^1$ of \eqref{ss:uTB}:
  \begin{equation*}
    \mu_{kp}, \quad \gamma_{k-1}\varphi u\,\sigma v_1, \quad \gamma_{k-1}\varphi u\,u^{p-2}\sigma u,  \quad \gamma_i\varphi u\,\mu_{(k-i)p},\; 1\leq i < k .
  \end{equation*}

  We know the following differentials in \eqref{ss:uTB}:
  \begin{equation*}
    d^{2p-3}(\gamma_i\varphi u)=u^{p-2}\sigma u\gamma_{i-1}\varphi u
  \end{equation*}
  for $i\geq 1$ from Proposition~\ref{prop:diffin6};
  \begin{equation*}
    d^{2p}(\mu_{(i+1)p})=p^{\nu(i)}\sigma v_1\mu_{ip}
  \end{equation*}
  from the map $\eqref{ss:lZ}\rightarrow \eqref{ss:uTB}$ and Proposition~\ref{prop:diffin4}.

  To complete the multiplicative description, we also note that $\sigma v_1$ is an infinite cycle for bidegree reasons, and that all the degreewise possible values for $d^{2p-3}(\mu_{(k-i)p})$ result in a non-zero $d^{2p-3}(\gamma_i\varphi u\,\mu_{(k-i)p})$.

  From this description, after $d^{2p}$ the only generator left in $E^{2p+1}$ in total degree $2kp -1$ is $p\mu_{kp}$, so that $\THH_{2kp-1}(ku;ku/v_1)$ is isomorphic to $\Z/p^{\nu(k)}\Z$. This proves our claim about \eqref{ss:uT}.
\end{proof}

We will now describe $\THH_*(ku;ku/v_1)$; once again, we will use $v_0$ to denote multiplication by $p$ in the $E^\infty$-page of the spectral sequence \eqref{ss:uT}, and $p\cdot$ to denote the multiplication in the target group.
\begin{proposition}
  $\THH_*(ku;ku/v_1)$ is generated as a $\Z_{(p)}[u]/(u^{p-1})$-module by
  \begin{equation*}
    \begin{gathered}
      1,\, \sigma u,\, \mu_p \\
      v_0\mu_{kp},\, u\mu_{kp},\, k\geq 2 \\
      \sigma u\mu_{kp},\, k\geq 1 
    \end{gathered}
  \end{equation*}
  with the relations:
  \begin{equation*}
    \begin{aligned}
      u^{p-2}\cdot\sigma u & = p\cdot \mu_p \\
      u\cdot v_0\mu_{kp} & = p\cdot u\mu_{kp},\, k\geq 2 \\
      p^{\nu(k)+1}\cdot u\mu_{kp} & = 0 ,\, k\geq 2\\
      u^{p-3}\cdot u\mu_{kp} & = 0 ,\, k\geq 2\\
      p^{\nu(k)+1}\cdot \sigma u\mu_{kp} & = 0 ,\, k\geq 2 \\
      p^{\nu(k)}\cdot v_0\mu_{kp} & = 0,\, k\geq 2\\
      p^{\nu(k)}u^{p-2}\cdot \sigma u\mu_{kp} & = 0,\, k\geq 2.
    \end{aligned}
  \end{equation*}
\end{proposition}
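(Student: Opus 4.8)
The plan is to assemble $\THH_*(ku;ku/v_1)$ from the $E^\infty$-page of the truncated Bockstein spectral sequence \eqref{ss:uT}: its $E^1$-page carries a $\Z_{(p)}[u]/(u^{p-1})$-module structure, coming from the $P_{p-1}(u)$-factor together with the fact that $u$ is a permanent cycle, and the extensions will be pinned down by comparison with \eqref{ss:uTB} and with the spectral sequences over $\ell$.

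First I would compute $E^\infty$ of \eqref{ss:uT}. By \cref{prop:diffin5} the only nonzero differentials are $d^{2p-4}(\mu_{(k+1)p}) = p^{\nu(k)}u^{p-2}\sigma u\,\mu_{kp}$ up to units, $k\geq 1$; since $\sigma u$ lies in the image of $\THH_3(ku)\to\THH_3(ku;H\Z_{(p)})$ it is a permanent cycle, $u$ is one too, and $E^1=\THH_*(H\Z_{(p)})\otimes E(\sigma u)\otimes P_{p-1}(u)$ is a module over $\THH_*(ku;H\Z_{(p)})$, so the full page is determined. A bidegree count shows that every total degree carries at most one $E^\infty$-class, the only exception being degree $2p-1$, where both $\mu_p$ (filtration $0$) and $u^{p-2}\sigma u$ (top filtration) survive. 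Reorganized over $\Z_{(p)}[u]/(u^{p-1})$, the surviving classes are exactly the generators listed, with the orders of the cyclic subquotients giving the torsion relations $p^{\nu(k)+1}\cdot u\mu_{kp}=0$, $p^{\nu(k)+1}\cdot\sigma u\mu_{kp}=0$, $p^{\nu(k)}\cdot v_0\mu_{kp}=0$ and $p^{\nu(k)}u^{p-2}\cdot\sigma u\mu_{kp}=0$ (the last because the $E^1$-order $p^{\nu(k)+1}$ of $u^{p-2}\sigma u\mu_{kp}$ is cut down by the image of $d^{2p-4}$), while $v_0\mu_{kp}$ denotes the class represented by $p\mu_{kp}\in E^\infty$, lying in $\ker d^{2p-4}$.

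Next I would resolve the single nontrivial additive extension, in degree $2p-1$. The inclusion $i\colon\ell\hookrightarrow ku$ with $\ell/v_1\simeq H\Z_{(p)}$ gives a map of Brun spectral sequences $\eqref{ss:lZ}\to\eqref{ss:uTB}$, under which $\sigma v_1$ corresponds to $u^{p-2}\sigma u$ by \cref{prop:imagesigmav1}. The hidden extension $p\cdot\mu_p=\sigma v_1$ of \cref{prop:diffin4} is natural along this map, so $u^{p-2}\cdot\sigma u = p\cdot\mu_p$ in $\THH_*(ku;ku/v_1)$; this makes $\THH_{2p-1}(ku;ku/v_1)$ free of rank one on $\mu_p$ and fixes the $u$-action on $\sigma u$. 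The relations $u\cdot v_0\mu_{kp}=p\cdot u\mu_{kp}$ then follow from compatibility of the $u$-action with $E^\infty$, since no $E^\infty$-class of \eqref{ss:uT} sits above the $u\mu_{kp}$-spot in the relevant degree; the vanishing of the top $u$-multiples uses $u^{p-1}=v_1=0$ in the coefficients $ku/v_1$ propagated through this extension, supplemented where needed by transporting differentials from \eqref{ss:uTB}, whose $E^1$-page carries the divided powers $\gamma_k\varphi u$ and the class $\sigma v_1$ — controlled by \cref{prop:diffin6} and by the differentials extracted in the proof of \cref{prop:diffin5} — through which these relations become genuine boundaries. A last bidegree count rules out further extensions.

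The main obstacle is the hidden $u$-multiplicative structure of the last step: it is invisible on $E^\infty$ of \eqref{ss:uT} alone, so the proof must genuinely play \eqref{ss:uT} against \eqref{ss:uTB} (and, for torsion bookkeeping, against the periodic information behind \cref{prop:torsioncoincide}), and carrying the $\nu(k)$-dependence of the cyclic orders faithfully through that comparison is the delicate part.
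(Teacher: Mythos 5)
Your argument is essentially the paper's: apart from the degree $2p-1$ extension, the answer is read off the $E^\infty$-page of \eqref{ss:uT} furnished by \cref{prop:diffin5} (your bidegree count just makes explicit the paper's implicit claim that $\mu_p$ is the only possible extension partner), and the extension $p\cdot\mu_p=u^{p-2}\sigma u$ is obtained exactly as in the paper, by transporting the relation $p\cdot\mu_p=\sigma v_1$ of \eqref{ss:lZ} along the map induced by $\ell\rightarrow ku$ and identifying the image of $\sigma v_1$ with $u^{p-2}\sigma u$ via \cref{prop:imagesigmav1}. The closing appeal to \eqref{ss:uTB} for the top $u$-multiples is superfluous --- those vanishings follow from the $\Z_{(p)}[u]/(u^{p-1})$-module structure and the order reduction already visible in \eqref{ss:uT} --- but harmless.
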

\begin{proof}
  Except for the extension $p\cdot \mu_p = u^{p-2}\sigma u$ this is the $E^\infty$-page of \eqref{ss:uT}. This extension is present in \eqref{ss:lZ}, and since from Lemma~\ref{prop:imagesigmav1} the map $i:\THH_*(\ell;H\Z_{(p)})\rightarrow \THH_*(ku;ku/v_1)$ is such that $i(\sigma v_1) = u^{p-2}\sigma u$, and $i(\mu_p)=\mu_p$, it must be that $u^{p-2}\sigma u$ is also divisible by $p$ in $\THH_{2p-1}(ku;ku/v_1)$. The only possible extension is with $\mu_p$, so we get our formula up to a unit.
\end{proof}

Without the module structure, writing all the classes, we obtain:
\begin{equation*}
  \begin{aligned}
    & \Z_{(p)}\{1,\, u,\, \dots,\, u^{p-2},\, \sigma u,\,  u\sigma u,\, \dots,\, u^{p-2}\sigma u,\,  \mu_p\} \\
    & \oplus\bigoplus_{k\geq 1} \faktor{\Z}{p^{\nu(k)+1}}\{u\mu_{kp},\, u^2\mu_{kp},\, \dots,\, u^{p-2}\mu_{kp}\} \\
    & \oplus\bigoplus_{k\geq 1} \faktor{\Z}{p^{\nu(k)+1}}\{\sigma u\mu_{kp},\, u\mu_{kp},\, \dots,\, u^{p-3}\mu_{kp}\} \\
    & \oplus\bigoplus_{k\geq 2} \faktor{\Z}{p^{\nu(k)}}\{v_0\mu_{kp},\, u^{p-2}\sigma u\mu_{kp}\}
  \end{aligned}
\end{equation*}
with relations $u^{p-2}\sigma u = p\cdot \mu_p$ and $u\cdot v_0\mu_{kp}=p\cdot u\mu_{kp}$.

\subsection{Computation of the Bockstein spectral se\-quen\-ce for $\THH_*(ku)$}
\label{section:thhku}

We know enough about these first three spectral sequences to compute the fourth:
\begin{align}
    & \THH_*(\ell;H\Z_{(p)})\botimes P(v_1) && \Rightarrow \THH_*(\ell) \tag{\ref{ss:l}} \\
    & \THH_*(ku;H\Z_{(p)})\botimes P_{p-1}(u) && \Rightarrow \THH_*(ku;ku/v_1) \tag{\ref{ss:uT}} \\
    & \THH(ku;ku/v_1)\botimes P(v_1) && \Rightarrow \THH_*(ku) \tag{$v_1$} \label{ss:v1} \\
    & \THH_*(ku;H\Z_{(p)})\botimes P(u) && \Rightarrow \THH_*(ku). \tag{$u$} \label{ss:u}
\end{align}

The map $\THH(\ell;H\Z_{(p)})\rightarrow \THH(ku;ku/v_1)$ induces a morphism of spectral sequences $\eqref{ss:l}\rightarrow \eqref{ss:v1}$, which determines some differentials in \eqref{ss:v1}. These differentials, together with the ones computed in the previous section in \eqref{ss:uT} and with Theorems~\ref{prop:totalandsmall}, \ref{prop:totaltotruncated}, \ref{prop:truncatedtototal} and \ref{prop:nulldiffs} relating spectral sequences and their truncations, yield a description of the differentials in \eqref{ss:u}.

\begin{theorem}\label{prop:diffinu}
  The differentials in \eqref{ss:u} are given by the formula:
  \begin{equation*}
    d^{p^{n+1}-2}(p^n\mu_{(k+1)p^{n+1}}) = ku^{p^{n+1}-2}\sigma u \mu_{kp^{n+1}},\; k\geq 0,\; n\geq 0
  \end{equation*}
  up to a unit and linearity with respect to multiplication by $u$.
\end{theorem}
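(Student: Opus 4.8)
The plan is to fit all four spectral sequences into the single framework of \cref{chap:truncated}. Take the $u$-Bockstein tower $Y_n = \Sigma^{2n}\THH(ku)$ for $n \ge 0$, constant below $0$, with maps multiplication by $u$; then $(\mathcal{B})$ is exactly \eqref{ss:u}. Since $v_1 = u^{p-1}$ in $ku_*$, multiplication by $v_1$ on $\THH(ku)$ is $p-1$ successive multiplications by $u$, so the $v_1$-Bockstein tower is the subtower on the stages $Y_{(p-1)m}$: with $\phi(m) = (p-1)m$ the gathered spectral sequence $({}^\phi\mathcal{B})$ is \eqref{ss:v1}, and the truncated spectral sequence $(\mathcal{T}_{\phi(m)}^{\phi(m+1)})$ is \eqref{ss:uT} translated by $v_1^m$, the window width being $\phi(m+1) - \phi(m) = p-1$. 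Thus \crefrange{prop:totalandsmall}{prop:nulldiffs} apply with $\phi$ linear of slope $p-1$.

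\textbf{The case $n = 0$.} In the filtration grading of the $u$-tower the differential of \eqref{ss:uT} from \cref{prop:diffin5} runs from $\mu_{(k+1)p}$, at filtration $0$, to $p^{\nu(k)}u^{p-2}\sigma u\mu_{kp}$, at filtration $p-2$; since $p-2 < p-1$ it sits inside a single window, so the equivalence of \cref{prop:totalandsmall} reproduces it in \eqref{ss:u}. As $p^{\nu(k)}$ is $k$ times a unit of $\Z_{(p)}$, this reads $d^{p-2}(\mu_{(k+1)p}) = k u^{p-2}\sigma u\mu_{kp}$; multiplying by powers of $u$ yields the whole $n=0$ family, and since \cref{prop:diffin5} lists all differentials of \eqref{ss:uT}, the same equivalence shows these are exactly the differentials of \eqref{ss:u} fitting inside one window.

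\textbf{The case $n \ge 1$.} These differentials are too long for one window and must be recovered through \eqref{ss:v1}. The map $\ell \to ku$ commutes with multiplication by $v_1$, hence induces a map of Bockstein spectral sequences $\eqref{ss:l} \to \eqref{ss:v1}$; on $E^1$ it is $\THH_*(\ell;H\Z_{(p)}) \to \THH_*(ku;ku/v_1)$, under which $\sigma v_1 \mapsto u^{p-2}\sigma u$ up to a unit by \cref{prop:imagesigmav1}, while $v_1$ and the $\mu$- and $v_0\mu$-classes map to their namesakes. Pushing the differentials of \cref{prop:diffinell} across this map, and using the explicit form of $\THH_*(ku;ku/v_1)$ to see that the relevant source classes are neither hit nor support a shorter differential, produces in \eqref{ss:v1} a differential whose target is $v_1^{r}$ times the class $u^{p-2}\sigma u\mu_{kp^{n+1}}$ of \eqref{ss:uT}, with $r$ determined by the internal degrees. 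Applying \cref{prop:truncatedtototal} to this differential — source at $v_1$-filtration $0$, target at $v_1$-filtration $r$ — gives a differential of \eqref{ss:u} whose source has some $u$-filtration $n'$ with $0 \le n' \le p-2$ and whose target has some $u$-filtration $m$ with $(p-1)r \le m < (p-1)(r+1)$; a degree count forces $n' = 0$ and $m = p^{n+1}-2$, and the identity $(p-1)r + (p-2) = p^{n+1}-2$ identifies the target as $v_1^{r} u^{p-2}\sigma u\mu_{kp^{n+1}} = u^{p^{n+1}-2}\sigma u\mu_{kp^{n+1}}$. This produces the stated differential, $u$-linearity produces the rest, and a comparison of $E^\infty$-pages — equivalently, feeding the complete lists of \cref{prop:diffin5} and \cref{prop:diffinell} through \cref{prop:totaltotruncated}, \cref{prop:truncatedtototal} and \cref{prop:nulldiffs} — shows there are no other differentials.

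The main obstacle is the transfer in the case $n \ge 1$: \cref{prop:truncatedtototal} only guarantees a differential carried by some possibly-modified representative $\check{x}'$ of filtration $n' \ge n$, not necessarily by the obvious generator $p^n\mu_{(k+1)p^{n+1}}$. Here the ambiguity collapses because that generator is the unique class of its internal degree at $u$-filtration $0$, hence of strictly smaller filtration than any competitor, which forces $n' = n = 0$ and pins the differential to it; establishing this "better statement" degree by degree, and keeping the $p$-adic-valuation coefficients coherent while passing through \eqref{ss:uT}, \eqref{ss:v1} and back to \eqref{ss:u}, is the real content of the argument.
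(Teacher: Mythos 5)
Your overall architecture is the same as the paper's: identify \eqref{ss:u} with the total tower, \eqref{ss:v1} with the gathered spectral sequence for $\phi$ of slope $p-1$, and \eqref{ss:uT} with the truncated windows; lift the short differentials of \cref{prop:diffin5} into \eqref{ss:u} by \cref{prop:totalandsmall}; import the differentials of \cref{prop:diffinell} into \eqref{ss:v1} through the morphism $\eqref{ss:l}\rightarrow\eqref{ss:v1}$ (using \cref{prop:imagesigmav1}); and descend them to \eqref{ss:u} by \cref{prop:truncatedtototal}. Your resolution of the representative ambiguity is a mild variant of the paper's: you pin $n'$ to the bottom of the window by a parity/degree count (only filtrations congruent to the bottom of the window can carry an odd-degree class of the right total degree), whereas the paper argues that in \eqref{ss:v1} each bidegree is generated by a single class, so the class $x'$ produced by \cref{prop:truncatedtototal} coincides with the expected source; both are acceptable, and your degree count is a reasonable way to make the pinning explicit. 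You are also somewhat lighter than the paper on why the imported \eqref{ss:l}-differentials are still nonzero at the relevant pages of \eqref{ss:v1}: the paper first computes the $E^2$-page of \eqref{ss:v1} by pushing the short differentials through \cref{prop:totaltotruncated} and arguing these are the only $d^1$'s, and this bookkeeping is what makes ``pushing \cref{prop:diffinell} across the map'' legitimate at later pages; you should do this explicitly rather than appeal to ``the explicit form of $\THH_*(ku;ku/v_1)$.''

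The genuine gap is the exhaustiveness step. You close with ``a comparison of $E^\infty$-pages --- equivalently, feeding the complete lists of \cref{prop:diffin5} and \cref{prop:diffinell} through \cref{prop:totaltotruncated}, \cref{prop:truncatedtototal} and \cref{prop:nulldiffs} --- shows there are no other differentials,'' but neither half of that sentence works as stated. There is no independent knowledge of the abutment $\THH_*(ku)$ against which to compare $E^\infty$-pages, and the transfer results only control differentials whose sources and targets are already accounted for: the map $\eqref{ss:l}\rightarrow\eqref{ss:v1}$ is far from surjective on $E^1$, so knowing every differential of \eqref{ss:l} says nothing a priori about classes of \eqref{ss:v1} outside its image. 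The classes that actually require an argument are the $\sigma u\mu_{kp}$, in particular those with $p\nmid k$, which have no counterpart in $\THH_*(\ell;H\Z_{(p)})$. The paper spends the last third of its proof exactly here: for $p\mid k$ it uses that $u^{p-2}\sigma u\mu_{kp^2}$ lies in the image of $\eqref{ss:l}\rightarrow\eqref{ss:v1}$, transfers the infinite-cycle property to \eqref{ss:u} via \cref{prop:nulldiffs}, and then uses that the only $u^{p-2}$-torsion in \eqref{ss:u} sits in even degree; for $p\nmid k$ it runs a separate analysis of all degreewise possible targets, showing each would force a $u$-torsion pattern incompatible with the differentials already established. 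Without this (or an equivalent counting argument, which you have not supplied), your proof establishes the stated differentials but not that they are \emph{all} the differentials of \eqref{ss:u}, which is part of the claim of \cref{prop:diffinu}.
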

\begin{proof}
  Here we make good use of our results on truncated spectral sequences.

  First, the differentials in \eqref{ss:uT} from Proposition~\ref{prop:diffin5} are lifted to \eqref{ss:u} using Theorem~\ref{prop:totalandsmall}, that is in \eqref{ss:u} there are differentials:
  \begin{equation}
     \quad d^{p-2}(\mu_{(k+1)p})=p^{\nu(k)}u^{p-2}\sigma u\,\mu_{kp},\quad k\geq 1 \label{eq:somediffinu}
  \end{equation}
  and the corresponding differentials obtained by multiplying the source and the target by any power of $u$. These are the only differentials $d^r$ with $1\leq r\leq p-2$ in \eqref{ss:u} since these are the only differentials in \eqref{ss:uT}, again using Theorem~\ref{prop:totalandsmall}.

  We will now use Theorem~\ref{prop:totaltotruncated} and Theorem~\ref{prop:truncatedtototal}. With regard to Theorem~\ref{prop:truncatedtototal}, in our current computation, a statement stronger than the general case can be made. The general case would say that a differential $d(x)=y$ in \eqref{ss:v1} would result in the existence of an element $x'$ such that $d(x')=y$ in \eqref{ss:v1}, and such that this differential can be lifted to one in \eqref{ss:u}; but in \eqref{ss:v1}, each generator is alone in its bidegree, so that necessarily $x=x'$. Therefore, each differential $d(x)=y$ in \eqref{ss:v1} can really be lifted to a differential $d(x)=y$ in \eqref{ss:u}.

  Using Theorem~\ref{prop:totaltotruncated}, the differentials of formula \eqref{eq:somediffinu} result in \eqref{ss:v1} in
  \begin{equation*}
    d^1(u^i\mu_{(k+1)p})=p^{\nu(k)}v_1u^{i-1}\sigma u\,\mu_{kp},\quad k\geq 1,\quad 1\leq i\leq p-2
  \end{equation*}
  and the corresponding differentials obtained by multiplying the source and the target by any power of $v_1$. These are the only differentials $d^1$ in \eqref{ss:v1} since having more differentials would result in more differentials $d^r$ in \eqref{ss:u} with $1\leq r \leq p-2$. This gives the $E^2$-page of \eqref{ss:v1}:
  \begin{equation*}
    \begin{aligned}
      \eqref{ss:v1} :\; E^2 \cong & \Z_{(p)}\{1,\, u,\, \dots,\, u^{p-2},\, \sigma u,\,  u\sigma u,\, \dots,\, u^{p-3}\sigma u,\,  \mu_p\}\otimes P(v_1) \\
                         &  \oplus\bigoplus_{k\geq 2} \faktor{\Z}{p^{\nu(k)}}\{v_0\mu_{kp}\}\otimes P_{p-1}(u)\otimes P(v_1) \\
                         & \oplus\bigoplus_{k\geq 1} \faktor{\Z}{p^{\nu(k)+1}}\{\sigma u\mu_{kp},\, u\,\sigma u\mu_{kp},\, \dots,\, u^{p-3}\sigma u\mu_{kp}\} \\
                         & \oplus\bigoplus_{k\geq 1} \faktor{\Z}{p^{\nu(k)}}\{u^{p-2}\sigma u\mu_{kp},\, v_1\sigma u\mu_{kp},\, u\,v_1\sigma u\mu_{kp},\, \dots\}.
    \end{aligned}
  \end{equation*}

  We have written all the generators $v_0\mu_{kp}$ with $v_0$ because we will now account for the differentials in \eqref{ss:l} of Theorem~\ref{prop:diffinell}:
  \begin{equation*}
    d^{p^n+\dots +p}(p^{n-1}\cdot v_0\mu_{kp^{n+1}}) = (k-1) v_1^{p^n+\dots+p}\sigma v_1\mu_{(k-1)p^{n+1}},\; k\geq 1,\; n\geq 1.
  \end{equation*}
  Because of the morphism of spectral sequences $(\ell) \rightarrow (v_1)$, that formula is also true in \eqref{ss:v1}, and from Theorem~\ref{prop:truncatedtototal} we deduce the formula in \eqref{ss:u} that was claimed (which also encompasses formula \eqref{eq:somediffinu}).

  It remains to prove that the classes $\sigma u \mu_{kp},\, k\geq 1$ are infinite cycles in \eqref{ss:u}. The classes $u^{p-2}\sigma u \mu_{kp^2},\,k\geq 1$ are in the image of $\eqref{ss:l}\rightarrow \eqref{ss:v1}$ and so are infinite cycles in \eqref{ss:v1}, thus also in \eqref{ss:u} by Theorem~\ref{prop:nulldiffs}. Since in \eqref{ss:u} the only $u^{p-2}$-torsion is in even degree, it must be that $\sigma u \mu_{kp^2}$ are infinite cycles in \eqref{ss:u}. The remaining classes to check are the $\sigma u\mu_{kp}$ with $p$ not dividing $k$. Once again we know that these classes support no differentials of height up to $u^{p-2}$, and are of $u^{p-2}$-torsion after $d^{p-2}$ by formula \eqref{eq:somediffinu}. If some $\sigma u\mu_{kp}$ supports a non-zero differential the target must be $p^{\nu(k-i)}u^{ip+1}\mu_{(k-i)p},\, 1\leq i \leq k-1$ for degree reasons, and that target must be of $u^{p-2}$-torsion, that is to say some
  \begin{equation*}
    \begin{gathered}
      p^{\nu(k-i)}u^{ip+2}\mu_{(k-i)p}, \\
      p^{\nu(k-i)}u^{ip+3}\mu_{(k-i)p}, \\
      \vdots \\
      p^{\nu(k-i)}u^{ip+p-1}\mu_{(k-i)p}
    \end{gathered}
  \end{equation*}
  is already the target of a differential. But the only possible differentials still not accounted for are the ones targeting $p^{\nu(k-i)}u^{ip+1}\mu_{(k-i)p},\, 1\leq i \leq k-1$, and these are of height $u^h$ with $h$ reducing to 1 modulo $p$.
\end{proof}

We can change our generators so that the differentials are not given up to a unit but are given exactly.
\begin{proposition} \label{prop:diffexactes}
  We can change the generators $\mu_N$ and $\sigma u\mu_N$ of $\THH_*(ku;H\Z_{(p)})$ with a multiplication by a unit so that the differentials in \eqref{ss:u} are given by the formula:
  \begin{equation*}
    d^{p^{n+1}-2}(p^n\mu_{(k+1)p^{n+1}}) = p^{\nu(k)}u^{p^{n+1}-2}\sigma u \mu_{kp^{n+1}},\; k\geq 0,\; n\geq 0
  \end{equation*}
\end{proposition}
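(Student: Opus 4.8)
The plan is to absorb the ambiguous units of \cref{prop:diffinu} into the generators, one differential at a time. After absorbing the $p$-free part $k/p^{\nu(k)}$ of $k$ into the unit, write each differential of \eqref{ss:u} in the form $d^{p^{n+1}-2}(p^n\mu_{(k+1)p^{n+1}}) = \epsilon_{n,k}\, p^{\nu(k)}\, u^{p^{n+1}-2}\sigma u\mu_{kp^{n+1}}$ with $\epsilon_{n,k}\in\Z_{(p)}^\times$ (for $n=0$ this is \cref{prop:diffin5}). We look for units $c_N$ and $e_M$, indexed by the multiples of $p$, and replace the $P(u)$-module generators $\mu_N$ and $\sigma u\mu_M$ of $\THH_*(ku;H\Z_{(p)})\botimes P(u)$ by $c_N\mu_N$ and $e_M\sigma u\mu_M$ (so that $u^i\mu_N$ becomes $c_N u^i\mu_N$, and likewise $u^i\sigma u\mu_M$). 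The differential with source $\mu_{M+p^{n+1}}$ and target $\sigma u\mu_M$ (where $M=kp^{n+1}$, $k\geq 1$, $0\leq n\leq \nu(M)-1$) then gets multiplied by $c_{M+p^{n+1}}\,\epsilon_{n,k}\,e_M^{-1}$, so the requirement that every differential be given exactly by the stated formula is precisely the system of equations $c_{M+p^{n+1}}\,\epsilon_{n,M/p^{n+1}} = e_M$.

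First I would dispose of the $n=0$ family. The differentials $d^{2p-4}(\mu_{(k+1)p}) = \epsilon_{0,k}\,p^{\nu(k)}\,u^{p-2}\sigma u\mu_{kp}$ have pairwise distinct sources and pairwise distinct targets, so whatever the $c_N$ turn out to be, setting $e_M := c_{M+p}\,\epsilon_{0,M/p}$ makes every $n=0$ differential exact. It then remains to choose the units $c_N$ so that in addition, for each $M$ with $\nu(M)=b\geq 2$ and each $1\leq n\leq b-1$, one has $c_{M+p^{n+1}}\,\epsilon_{n,M/p^{n+1}} = e_M = c_{M+p}\,\epsilon_{0,M/p}$, i.e.\ $c_{M+p^{n+1}} = c_{M+p}\cdot\bigl(\epsilon_{0,M/p}/\epsilon_{n,M/p^{n+1}}\bigr)$. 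I would build the $c_N$ by induction on $N$ (equivalently on the degree $2N-1$ of $\mu_N$): each such equation forces the value of $c_{M+p^{n+1}}$ in terms of the already-defined $c_{M+p}$, since $M+p = (M+p^{n+1})-(p^{n+1}-p) < M+p^{n+1}$.

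The step requiring care, and the one I expect to be the main obstacle, is consistency: a single generator $\mu_N$ with $\nu(N)\geq 3$ is the source of several higher differentials $d^{p^{j}-2}(p^{j-1}\mu_N)$ (for every $2\leq j\leq\nu(N)$ with $N\geq 2p^{j}$), so $c_N$ is pinned down by more than one of the above equations and these must not conflict. Equivalently, one must show that the bipartite ``constraint graph'' on $\{\mu_N\}\sqcup\{\sigma u\mu_M\}$ whose edges are the differentials admits a consistent assignment of the $c_N$ and $e_M$, which holds if that graph is a forest. A putative cycle would give integers $N_1,\dots,N_t$ with $N_{i+1} = N_i - p^{a_i}+p^{b_i}$, $p^{a_i}\mid N_i$, $p^{b_i}\mid N_{i+1}$, $N_i\geq 2p^{a_i}$, $N_{i+1}\geq 2p^{b_i}$, $a_i\neq b_i$ and $a_i\neq b_{i-1}$ (indices mod $t$); these divisibility constraints together with $\sum_i p^{a_i}=\sum_i p^{b_i}$ and a $p$-adic valuation argument rule this out, the case $t=2$ already being impossible by uniqueness of base-$p$ expansions. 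If a cycle should nevertheless survive, one falls back on the weaker observation that its monodromy is trivial: the ``$k$''-coefficients of the two differentials meeting at any shared target $\sigma u\mu_M$ differ by a power of $p$, hence have the same $p$-free part, so the unit ratios multiply to $1$ around the cycle. With the $c_N$ and $e_M$ in hand, the new generators $c_N\mu_N$ and $e_M\sigma u\mu_M$ realise the differentials of \eqref{ss:u} exactly as stated.
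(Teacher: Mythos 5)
Your reduction is the paper's own strategy: rescale the generators by units $c_N$, $e_M$, observe that exactness of all differentials is a system of equations indexed by the edges of the bipartite graph on $\{\mu_N\}\sqcup\{\sigma u\mu_M\}$ whose edges are the differentials, and note that a consistent assignment exists if that graph is a forest. That part is fine and matches the paper (its graph $\mathcal{G}$, with roots chosen in each tree and units adjusted generation by generation). The genuine gap is that you never prove acyclicity, which is the heart of the proposition. Your proposed route --- the global identity $\sum_i p^{a_i}=\sum_i p^{b_i}$ plus the adjacency inequalities $a_i\neq b_i$, $a_{i+1}\neq b_i$ and ``a $p$-adic valuation argument'' --- is only carried out for $t=2$. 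For general $t$ the sum identity and the inequalities alone do not rule out cycles (for instance $a=(1,2,3)$, $b=(3,1,2)$ satisfies both), so everything hinges on how the divisibility conditions $p^{a_i}\mid N_i$, $p^{b_i}\mid N_{i+1}$ enter, and that is exactly the step you leave unspecified. The paper supplies it as a local fact: from any vertex at most one incident edge leads to a vertex of non-smaller $p$-adic valuation, because for an edge $\mu_N\text{---}\sigma u\mu_{N-p^a}$ with $1\leq a\leq\nu(N)$ one has $\nu(N-p^a)=a<\nu(N)$ unless $a=\nu(N)$, and symmetrically for edges out of $\sigma u\mu_M$; hence a vertex of minimal valuation on a simple cycle, whose two cycle neighbours both have valuation at least its own, is a contradiction. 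Without this (or an equivalent use of the divisibility) your argument is incomplete at its crucial point.

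Moreover, your fallback is unsound rather than weaker. The units $\epsilon_{n,k}$ are genuinely unknown: \cref{prop:diffinu} determines the differentials only up to unspecified units arising from the comparison arguments, so $\epsilon_{n,k}$ is not a function of the nominal coefficient $k$, and the observation that the two $k$'s meeting at a common target $\sigma u\mu_M$ have the same $p$-free part says nothing about the product of the $\epsilon^{\pm1}$ around a putative cycle. If the graph did contain a cycle, the normalization could genuinely fail; this is precisely why acyclicity must be proved and cannot be worked around by a monodromy claim.
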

\begin{proof}
  Note that we have chosen $p^{\nu(k)}$ instead of $k$, but these are the same up to a unit. We could have written the same statement with $k$.

  The differentials are making the $\mu_N$ and $\sigma u\mu_{N}$ interact, and once we have chosen a specific unit for one of them, we have to use the same unit for all their multiplication by powers of $p$. Consider the simple, unoriented graph $\mathcal{G}$ whose vertices are the classes $\mu_N$ and $\sigma u \mu_N$ for any $N \geq 0$ divisible by $p$, with an edge $\mu_N \text{---}\sigma u\mu_{N'}$ whenever there is a differential
  \begin{equation*}
    d(p^i\mu_N) = p^ju^\bullet\sigma u\mu_{N'}
  \end{equation*}
  for any $i$ and $j$, up to a unit, in the spectral sequence. The graph $\mathcal{G}$ is bipartite, with classes given by the presence or absence of $\sigma u$ in the vertex name. If we prove that $\mathcal{G}$ is acyclic, then we have proven our statement. Indeed, $\mathcal{G}$ is then a collection of trees; we can choose an arbitrary root in each connected component of $\mathcal{G}$; starting from the roots, we can change each generation of the trees by a unit to get the formula we want.

  We will reason about the $p$-adic valuation of $N$ and $N'$, denoted $\nu(N)$ and $\nu(N')$. There is an edge $\mu_N \text{---}\sigma u\mu_{N'}$ in $\mathcal{G}$ if and only if there exists $(k,n) \in \N^2$ such that
  \begin{equation*}
    N = (k+1)p^{n+1} \quad N' = kp^{n+1}.
  \end{equation*}

  Fix $N$, $k$ and $n$ such that $N = (k+1)p^{n+1}$, let $N' = kp^{n+1}$. Then
  \begin{equation*}
    \nu(N') \geq \nu(N) \iff n+1 = \nu(N)
  \end{equation*}
  so that there can only be one edge $\mu_N\text{---}\sigma u\mu_{N'}$ such that $\nu(N') \geq \nu(N)$.

   Fix $N'$, $k$ and $n$ such that $N' = kp^{n+1}$, let $N = (k+1)p^{n+1}$. Then
  \begin{equation*}
    \nu(N) \geq \nu(N') \iff n+1 = \nu(N')
  \end{equation*}
  so that there can only be one edge $\sigma u\mu_{N'}\text{---}\mu_{N}$ such that $\nu(N) \geq \nu(N')$.

  Combining both cases, we see that from any vertex of $\mathcal{G}$, there is exactly one edge such that the $p$-adic valuation is non-decreasing. If there were a non-trivial cycle in $\mathcal{G}$, we could extract an irreducible cycle that goes through any edge at most once. Such a cycle would be confined to vertices whose $p$-adic valuation is constant; otherwise each edge would strictly decrease the valuation. But any vertex has at most one neighbor whose $p$-adic valuation is the same; thus $\mathcal{G}$ is acyclic.
\end{proof}

\subsection{Computing the extensions and a presentation of $\THH_*(ku)$}

We first compute the extensions in the torsion-free part of the spectral sequence, from the knowledge that the $p$-torsion and the $u$-torsion must be the same in $\THH_*(ku)$.
\begin{proposition}
  The torsion-free part of $\THH_*(ku)$ is a quotient of
  \begin{equation*}
    P(u)\otimes \Z_{(p)}\{1,\,\sigma u,\, \mu_p,\,v_0\mu_{p^2},\,v_0^2\mu_{p^3},\,\dots \}
  \end{equation*}
  with relations
  \begin{equation*}
    p\cdot \mu_p = u^{p-2}\sigma u
  \end{equation*}
  \begin{equation*}
    p\cdot v_0^n\mu_{p^{n+1}} = u^{p^{n+1}-p^n}v_0^{n-1}\mu_{p^n},\, n\geq 1.
  \end{equation*}
\end{proposition}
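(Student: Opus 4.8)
The plan is to read off the non-torsion part of the $E^\infty$-page of \eqref{ss:u} from \cref{prop:diffexactes}, to identify the multiplicative extensions among the surviving classes using the comparison map $\THH_*(\ell)\to\THH_*(ku)$ together with \cref{prop:torsioncoincide}, and to check that nothing is missed by comparing with the rational computation $\THH_*(ku)_\Q\cong ku_{\Q*}\otimes E(\sigma u)$. For the first point, note that every differential in \cref{prop:diffexactes} has source a power-of-$u$ multiple of $p^n\mu_{(k+1)p^{n+1}}$ with $k\geq 1$ and target a power-of-$u$ multiple of $u^{p^{n+1}-2}\sigma u\,\mu_{kp^{n+1}}$; since $\mu_N=0$ when $p\nmid N$, no differential enters or leaves the $\Z_{(p)}$-free subgroup $P(u)\{1,\sigma u\}$ of the $E^1$-page, which therefore survives and is exactly the non-torsion part of $E^\infty$. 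Hence the associated graded of $\THH_*(ku)/(\text{torsion})$ for the $u$-filtration is $P(u)\{1,\sigma u\}$ together with those torsion classes of $E^\infty$ that become non-torsion through extensions, and it is precisely those extensions that the statement records.

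To produce the extensions I would use that $v_1=u^{p-1}$ in $ku_*$, so that the $ku_*$-algebra map $\THH_*(\ell)\to\THH_*(ku)$ sends $v_1$ to $u^{p-1}$; by \cref{prop:imagesigmav1} it sends $\sigma v_1$ to a unit multiple of $u^{p-2}\sigma u$ (there being nothing of higher filtration in that degree), and chasing the square relating $\THH_*(-)$ and $\THH_*(-;H\Z_{(p)})$ for $\ell$ and $ku$ shows it sends the Bökstedt class $\mu_p$ to the class called $\mu_p$ in $\THH_*(ku)$. Defining $v_0^n\mu_{p^{n+1}}\in\THH_*(ku)$ for $n\geq 1$ as the image of the corresponding class of $\THH_*(\ell)$ and applying the map to the relations $p\cdot\mu_p=\sigma v_1$ and $p\cdot v_0^n\mu_{p^{n+1}}=v_1^{p^n}v_0^{n-1}\mu_{p^n}$ of $\THH_*(\ell)$, using $v_1^{p^n}=u^{p^{n+1}-p^n}$, yields the two displayed relations up to units, which may be absorbed by rescaling $\sigma u$ and the $\mu_{p^{n+1}}$ in the spirit of \cref{prop:diffexactes}. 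An induction on $n$ then shows each of these classes is non-torsion, since $p$ times it is a nonzero $u$-power multiple of a class already known to be non-torsion.

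It remains to see that $P(u)\{1,\sigma u,\mu_p,v_0\mu_{p^2},v_0^2\mu_{p^3},\dots\}$ modulo the two families of relations really is the whole torsion-free part. That module is free of rank one over $\Z_{(p)}$ in degree $0$ and in every degree $\geq 2$ (each relation expresses $p$ times a new generator through earlier ones), so it suffices to show the canonical map to $\THH_*(ku)/(\text{torsion})$ is surjective in each degree. By \cref{prop:torsioncoincide} the torsion of $\THH_*(ku)$ equals its $u$-torsion, so by \cref{prop:localizationthh} the module $\THH_*(ku)/(\text{torsion})$ embeds into $\THH_*(ku)[u^{-1}]\simeq\THH_*(KU)$; using the splitting $\THH(KU)_p\simeq(KU\vee\Sigma KU_\Q)_p$, the $1$-line of $\THH_*(KU)$ is $p$-integral while the $\sigma u$-line is only rational, so on even degrees this forces the $1$-part of $\THH_*(ku)/(\text{torsion})$ to be exactly $P(u)\{1\}$. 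On the $\sigma u$-line one must rule out further divisibility by $p$ of the degree-$(2m+3)$ generator $u^{m-p^{c+1}+2}v_0^c\mu_{p^{c+1}}$ with $c+1=\lfloor\log_p(m+2)\rfloor$; this follows by downward induction on $m$, using that in any degree $2m+3$ with $p\nmid m+2$ the $E^\infty$-page of \eqref{ss:u} has no torsion in filtration $0$, so that a hypothetical extra division by $p$ would, after removing one factor of $u$, descend to such a degree and contradict the shape of $E^\infty$. A rank comparison with $\THH_*(ku)_\Q\cong ku_{\Q*}\otimes E(\sigma u)$ then completes the identification.

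The main obstacle is precisely this last step: proving the presentation is exact, that is, that no surviving torsion class of $E^\infty$ other than $\mu_p$ and the $v_0^n\mu_{p^{n+1}}$ yields a new non-torsion class and that the $p$-divisibility of the $\sigma u$-line is exactly the one forced by the chain of relations. This is a filtration- and bidegree-bookkeeping argument in $E^\infty$ combined with \cref{prop:torsioncoincide}; by contrast, transporting the relations from $\THH_*(\ell)$ is routine once the comparison map and the identity $v_1=u^{p-1}$ are in place.
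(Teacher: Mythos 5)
Your route is genuinely different from the paper's at the decisive point: you import the extensions from $\THH_*(\ell)$ along $f\colon\THH_*(\ell)\to\THH_*(ku)$ (using $v_1\mapsto u^{p-1}$, \cref{prop:imagesigmav1} and functoriality of the relations of \cref{section:thhl}), whereas the paper never uses $f$ in this proof: it forces the extensions out of \cref{prop:torsioncoincide} alone, by showing that $p\cdot v_0^n\mu_{p^{n+1}}$ must equal an element that is not already divisible by $p$ (otherwise $v_0^n\mu_{p^{n+1}}-u^kx$ would be $p$-torsion yet detected by a non-$u$-torsion class of $E^\infty$, hence not $u$-torsion), and then checking degreewise, by induction on $n$, that $u^{p^{n+1}-p^n}v_0^{n-1}\mu_{p^n}$ is the only candidate. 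Your first two paragraphs do establish that the displayed relations hold among the images $f(\mu_p)$, $f(v_0^n\mu_{p^{n+1}})$ (modulo the unproved naturality statement that $\THH_*(\ell;H\Z_{(p)})\to\THH_*(ku;H\Z_{(p)})$ carries $\mu_p$, and for $n\geq 1$ the classes $v_0^n\mu_{p^{n+1}}$, to nonzero classes; ``chasing the square'' is asserted, not argued). But this only bounds the $p$-divisibility of the $\sigma u$-towers from below.

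The genuine gap is the step you yourself flag: that there is no further $p$-divisibility, equivalently that the presented module surjects onto the torsion-free part. Your descent (``a hypothetical extra division by $p$ would, after removing one factor of $u$, descend'') is valid exactly in those odd degrees $2m+3$ where $E^\infty$ of \eqref{ss:u} vanishes in filtration $0$: there $\THH_{2m+3}(ku)=u\cdot\THH_{2m+1}(ku)$, and since torsion equals $u$-torsion one may strip a factor of $u$ from $x=py+t$. (Note the correct criterion is that $m+2$ is not a power of $p$, not $p\nmid m+2$; for $p\mid m+2$ not a power of $p$ one must also use that all of $\Z/p^{\nu(m+2)}\{\mu_{m+2}\}$ dies in \eqref{ss:u}.) The descent therefore terminates at the degrees $2p^{n+1}-1$, where $p^n\mu_{p^{n+1}}$ survives in filtration $0$, and precisely there your sketch gives nothing: you still have to exclude $f(v_0^n\mu_{p^{n+1}})=py+t$ modulo torsion. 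To do so you would need both (i) that $f(v_0^n\mu_{p^{n+1}})$ has nonzero edge image $p^n\mu_{p^{n+1}}$ in $\THH_*(ku;H\Z_{(p)})$, which your proposal does not establish for $n\geq 1$, and (ii) an argument in the style of the paper's: from $x=py+t$ one gets that the error term $t$ is detected by the non-$u$-torsion class $p^n\mu_{p^{n+1}}$, hence is not $u$-torsion, contradicting \cref{prop:torsioncoincide}. Without (i) and (ii) the presentation is not identified, so the proof is incomplete as it stands; supplying them essentially reproduces the paper's argument. (Your even-degree step via the splitting of $\THH(KU)_p$ also needs the injectivity-into-$p$-completion mechanism of the proof of \cref{prop:torsioncoincide} to descend from the completed splitting to $\THH_*(ku)[u^{-1}]$, but that is a minor point.)
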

\begin{proof}
  This proof continues the reasoning of the proof of Lemma~\ref{prop:imagesigmav1}. The first extension is already proved in that lemma.
  We have a commutative diagram
  \begin{center}
    \begin{tikzcd}
      \THH_*(\ell) \rar \dar & \THH_*(ku) \dar \\
      \THH_*(\ell; H\Z_{(p)}) \rar & \THH_*(ku; H\Z_{(p)})
    \end{tikzcd}
  \end{center}
  where we have named classes $v_0^n \mu_{p^{n+1}}$ in $\THH_*(\ell)$, $\THH_*(\ell; H\Z_{(p)})$ and $\THH_*(ku; H\Z_{(p)})$ that are their respective images. Since there is already a relation
  \begin{equation*}
    p \cdot v_0^n \mu_{p^{n +1}} = v_1^{p^n}v_0^{n-1} \mu_{p^n}
  \end{equation*}
  in $\THH_*(\ell)$ and since $v_1 = u^{p-1}$ in $\THH_*(ku)$, we can see that the image of $v_0^n \mu_{p^{n+1}}$ in $\THH_*(ku)$ must be a class represented by $v_0^n \mu_{p^{n+1}} \in \THH_*(ku; H\Z_{(p)})$ in the spectral sequence \eqref{ss:u}, and that there is a relation
  \begin{equation*}
    p\cdot v_0^n\mu_{p^{n+1}} = u^{p^{n+1}-p^n}v_0^{n-1}\mu_{p^n}
  \end{equation*}
  in $\THH_*(ku)$.
\end{proof}

We now give a presentation of the torsion. Before computing the extensions, we need to ensure that the lifts we choose of the classes of the spectral sequence have the correct properties.

\begin{lemma} \label{lem:liftinssu}
  For any $n\geq 1$, $0\leq h < n$ and $a\geq 1$, $a$ not divisible by $p$, the infinite cycle
  \begin{equation*}
    p^h\sigma u \mu_{ap^n}
  \end{equation*}
  of the spectral sequence \eqref{ss:u} lifts to an element
  \begin{equation*}
    v_0^h\sigma u\mu_{ap^n}
  \end{equation*}
  in $\THH_*(ku)$ such that
  \begin{equation} \label{eq:thhkuliftuzero}
    u^{p^{n-h}-2}\cdot v_0^h\sigma u \mu_{ap^n} = 0 
  \end{equation}
  and if $a = bp + p - 1$ for some $b \geq 1$,
  \begin{equation} \label{eq:thhkuliftpzero}
    p\cdot u^{p^n-3}\cdot \sigma u\mu_{(bp + p -1)p^n} = u^{p^{n+1} - 3} v_0^{\nu(b)}\sigma u\mu_{bp^{n+1}}
  \end{equation}
  otherwise we have
  \begin{equation} \label{eq:thhkuliftpzeronotdiv}
    p\cdot u^{p^n-3}\cdot v_0^h\sigma u\mu_{ap^n} = 0.
  \end{equation}
  Moreover, in both cases, $p \cdot v_0^h\sigma u\mu_{ap^n}$ and $v_0^{h + 1}\sigma u \mu_{ap^n}$ differ only by an element divisible by $u$.
\end{lemma}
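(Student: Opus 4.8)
The plan is to obtain these lifts by transferring the relations of $\THH_*(\ell)$ along the ring map $g\colon\THH_*(\ell)\to\THH_*(ku)$ induced by $\ell\hookrightarrow ku$. This map is linear over $\ell_*=\Z_{(p)}[v_1]\to\Z_{(p)}[u]=ku_*$ (with $v_1\mapsto u^{p-1}$), and by \cref{prop:imagesigmav1} it sends $\sigma v_1$ to a class represented by $u^{p-2}\sigma u$ in \eqref{ss:u}. Tracking this through the morphism $\eqref{ss:l}\to\eqref{ss:v1}$ and the truncation/gathering dictionary of \cref{chap:truncated} relating \eqref{ss:v1} to \eqref{ss:u} (which identify $v_0$ with multiplication by $p$ and send $\mu_{ap^n}$ to $\mu_{ap^n}$), one sees that $g(v_0^h\sigma v_1\mu_{ap^n})$ is represented by $u^{p-2}p^h\sigma u\mu_{ap^n}$ in \eqref{ss:u}; in particular it lies in $u^{p-2}\THH_*(ku)$, so one may write $g(v_0^h\sigma v_1\mu_{ap^n})=u^{p-2}z$ for some $z\in\THH_*(ku)$.

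First I would treat $n\ge 2$. By \cref{prop:diffexactes} the only differential hitting the $\sigma u\mu_{ap^n}$-tower of \eqref{ss:u} is $d^{p^n-2}(p^{n-1}\mu_{(a+1)p^n})=u^{p^n-2}\sigma u\mu_{ap^n}$, so that tower is truncated only at height $p^n-2$; since $p-2<p^n-2$, multiplication by $u^{p-2}$ is injective on the filtration-$0$ part of $E^\infty$ in the degree of $p^h\sigma u\mu_{ap^n}$. Hence the $z$ above is forced to be a lift of $p^h\sigma u\mu_{ap^n}$, and I set $v_0^h\sigma u\mu_{ap^n}:=z$. Applying $g$ to the relations of the structure theorem for $\THH_*(\ell)$ then gives: from $v_1^{p^{n-h-1}+\dots+p}v_0^h\sigma v_1\mu_{ap^n}=0$ (for $0\le h\le n-2$), resp.\ $v_0^{n-1}\sigma v_1\mu_{ap^n}=0$, the identity $u^{(p-1)(p^{n-h-1}+\dots+p)+(p-2)}v_0^h\sigma u\mu_{ap^n}=0$, resp.\ $u^{p-2}v_0^{n-1}\sigma u\mu_{ap^n}=0$, and since $(p-1)(p^{n-h-1}+\dots+p)+(p-2)=p^{n-h}-2$ this is exactly \eqref{eq:thhkuliftuzero}; from $p\cdot v_0^h\sigma v_1\mu_{ap^n}=v_0^{h+1}\sigma v_1\mu_{ap^n}$ (generic case), resp.\ from the special relation when $a=bp+p-1$, one learns that $p\cdot v_0^h\sigma u\mu_{ap^n}$ and $v_0^{h+1}\sigma u\mu_{ap^n}$ agree modulo $u^{p-2}$-torsion, resp.\ up to the term $u^{p^{n+1}-p}v_0^{\nu(b)}\sigma u\mu_{bp^{n+1}}$, and multiplying by $u^{p^n-3}$ (which annihilates $u^{p-2}$-torsion, as $p^n-3\ge p-2$ for $n\ge 2$) and killing the $u^{p^n-3}v_0\sigma u\mu_{ap^n}$-summand via \eqref{eq:thhkuliftuzero} yields \eqref{eq:thhkuliftpzeronotdiv}, resp.\ \eqref{eq:thhkuliftpzero} (the $u$-exponent on the surviving term being determined by $g(v_1)=u^{p-1}$).

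The case $n=1$ (so $h=0$) must be handled separately, since $\sigma v_1\mu_{ap}$ is not a generator of $\THH_*(\ell)$ there. For this I would use the cofiber sequence $\Sigma^{2p-2}\THH(ku)\xrightarrow{v_1}\THH(ku)\to\THH(ku;ku/v_1)$, whose long exact sequence identifies $u^{p-1}\THH_*(ku)$ with the kernel of $\THH_*(ku)\to\THH_*(ku;ku/v_1)$. In $\THH_*(ku;ku/v_1)$ the class $\sigma u\mu_{ap}$ has order $p$ and its $u$-tower stops at $u^{p-3}$ (by the earlier computation of $\THH_*(ku;ku/v_1)$), so $u^{p-2}\sigma u\mu_{ap}=0$ and $p\cdot u^{p-3}\sigma u\mu_{ap}=0$ there; pulling this back along the long exact sequence, one adjusts the chosen lift of $\sigma u\mu_{ap}$ by an element of $u\THH_*(ku)$ so that $u^{p-2}\sigma u\mu_{ap}=0$ in $\THH_*(ku)$, and the remaining $n=1$ instances follow the same way.

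Finally, the last assertion is automatic once the lifts are in place: $p\cdot v_0^h\sigma u\mu_{ap^n}$ and $v_0^{h+1}\sigma u\mu_{ap^n}$ both lift the class $p^{h+1}\sigma u\mu_{ap^n}$ of $E^\infty$ in filtration $0$ (which is $0$ when $h+1=n$), so they differ by an element of positive filtration, i.e.\ divisible by $u$; in the special case $a=bp+p-1$, $h=0$, the correction term $u^{p^{n+1}-p}v_0^{\nu(b)}\sigma u\mu_{bp^{n+1}}$ is itself divisible by $u$, consistently. The main obstacle I anticipate is making the division by $u^{p-2}$ rigorous and mutually consistent over $h$ and over the $p$- and $v_1$-multiplications — essentially the injectivity of multiplication by $u^{p-2}$ on the relevant filtration-$0$ part of $E^\infty$ of \eqref{ss:u} — together with the routine but delicate bookkeeping of $u$-exponents; the $n=1$ step, resting on the already-computed $\THH_*(ku;ku/v_1)$, should be comparatively painless.
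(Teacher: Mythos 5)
Your construction of the lifts breaks down at two critical points. First, the injectivity you invoke is false: the $\sigma u\mu_{ap^n}$-tower in \eqref{ss:u} is not hit only by a single differential of length $p^n-2$; by \cref{prop:diffinu} it receives a differential from $p^m\mu_{(ap^{n-m-1}+1)p^{m+1}}$ for every $0\le m\le n-1$, in particular $d(\mu_{(ap^{n-1}+1)p})\doteq p^{n-1}u^{p-2}\sigma u\,\mu_{ap^n}$, so multiplication by $u^{p-2}$ on the filtration-zero part of $E^\infty$ in degree $2ap^n+2$ has kernel generated by $p^{n-1}\sigma u\mu_{ap^n}$. For $h\le n-2$ this only perturbs your detecting class by a unit and could be repaired, but for $h=n-1$ the method collapses outright: the $v_0$-towers of $\THH_*(\ell)$ are one step shorter than those of $\THH_*(ku)$ (one has $v_0^{h}\sigma v_1\mu_{ap^n}=0$ already for $h\ge n-1$), so your defining equation reads $u^{p-2}z=g(0)=0$, which is solved by $z=0$ and carries no information; it cannot produce the required nonzero lift $v_0^{n-1}\sigma u\mu_{ap^n}$, and correspondingly $u^{p-2}p^{n-1}\sigma u\mu_{ap^n}$ is a boundary, so the ``representation'' you assert is not even available there.

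Second, \eqref{eq:thhkuliftpzero} cannot be obtained by pushing the $\ell$-extension through $g$: on your own recipe the correction term $v_1^{p^n+\dots+p}v_0^{\nu(b)}\sigma v_1\mu_{bp^{n+1}}$ maps to $u^{p^{n+1}-p}\cdot u^{p-2}v_0^{\nu(b)}\sigma u\mu_{bp^{n+1}}=u^{p^{n+1}-2}v_0^{\nu(b)}\sigma u\mu_{bp^{n+1}}$, which is zero by \eqref{eq:thhkuliftuzero}; after multiplying by $u^{p^n-3}$ your argument therefore outputs $p\cdot u^{p^n-3}\sigma u\mu_{(bp+p-1)p^n}=0$, the negation of the hidden extension to be proved. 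The content of \eqref{eq:thhkuliftpzero} lives in $u$-filtrations that neither $E^\infty$-detection nor $g$-images control, so it must be produced by other means; likewise your $n=1$ step only shows $p\cdot u^{p-3}\sigma u\mu_{ap}\in u^{p-1}\THH_*(ku)$, not its identification with $u^{p^2-3}v_0^{\nu(b)}\sigma u\mu_{bp^2}$. The paper avoids all of this by working directly in the exact couple of \eqref{ss:u}: the differential of \cref{prop:diffinu} exhibits $\partial(p^n\mu_{(k+1)p^{n+1}})$ as $u^{p^{n+1}-3}$ times an element, and that element is taken as the definition of the lift; then \eqref{eq:thhkuliftuzero} is the exactness relation $u\circ\partial=0$, the relations \eqref{eq:thhkuliftpzero} and \eqref{eq:thhkuliftpzeronotdiv} follow from $\Z_{(p)}$-linearity of $\partial$ applied to $p^{n+1}\mu_{(k+1)p^{n+1}}$ (already named when $p$ divides $k+1$, zero otherwise), and the final divisibility statement is again exactness. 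I would redo the argument along those lines rather than through $\THH_*(\ell)$.
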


\begin{proof}
  We see these relations in the exact couple defining the spectral sequence \eqref{ss:u}. Since there are differentials
  \begin{equation*}
    d^{p^{n+1}-2}(p^n\mu_{(k+1)p^{n+1}}) = p^{\nu(k)}u^{p^{n+1}-2}\sigma u \mu_{kp^{n+1}},\; k\geq 0,\; n\geq 0
  \end{equation*}
  we can populate the diagram
    \begin{equation*}
    \begin{tikzcd}[column sep = small]
      A \ar[rr, "u"] & & A \ar[rr, "u"] \ar[dl, "j"] & & ... \ar[rr, "u"] & & A \ar[rr, "u"] & & A \ar[dl, "j"] \\
      & B \ar[ul, "\partial"] & & &   & & & B \ar[ul, "\partial"] & \\
      & & \alpha \ar[rr, mapsto] \ar[dl, mapsto] & & ... \ar[rr, mapsto] & & \beta \ar[rr, mapsto] & & 0 \\
      & p^{\nu(k)}\sigma u\mu_{kp^{n+1}} & & &   & & & p^n\mu_{(k+1)p^{n+1}} \ar[ul, mapsto] &
    \end{tikzcd}
  \end{equation*}
  where $A = \THH_*(ku)$ and $B=\THH_*(ku;H\Z_{(p)})$. Here we set $\beta = \partial(p^n\mu_{(k+1)p^{n+1}})$. The existence of the differential ensures that $\beta$ lifts $p^{n+1}-3$ times through the multiplication by $u$ map to an element $\alpha\in\THH_*(ku)$, and that $j(\alpha) = p^{\nu(k)}\sigma u \mu_{kp^{n+1}}$, that is $\alpha$ is represented by the infinite cycle $p^{\nu(k)}\sigma u \mu_{kp^{n+1}}$ of the spectral sequence. We then name that element of $\THH_*(ku)$
  \begin{equation*}
    \alpha = v_0^{\nu(k)}\sigma u\mu_{kp^{n+1}}
  \end{equation*}
 which implies that
 \begin{equation*}
   \begin{aligned}
     u^{p^{n+1}-3}v_0^{\nu(k)}\sigma u\mu_{kp^{n+1}} & = \beta \\
     u^{p^{n+1}-2}v_0^{\nu(k)}\sigma u\mu_{kp^{n+1}} & = 0
   \end{aligned}
 \end{equation*}
 which is another way to write \eqref{eq:thhkuliftuzero}.
 
 Moreover, $\partial$ is a map of $\Z_{(p)}$-modules. Thus,
 \begin{equation*}
   \partial(p^{n+1}\mu_{(k+1)p^{n+1}}) = p\cdot \partial(p^n\mu_{(k+1)p^{n+1}}) \in \THH_*(ku).
 \end{equation*}
 When $k+1$ is divisible by $p$, write $k= bp + p - 1$ so that $k+1 = (b+1)p$, the left hand side is already named
 \begin{equation*}
  \partial (p^{n+1}\mu_{(b+1)p^{n+2}}) = u^{p^{n+2}-3}v_0^{\nu(b)}\sigma u \mu_{bp^{n+2}}
 \end{equation*}
 and the right hand side is
 \begin{equation*}
   \begin{aligned}
     p\cdot \partial(p^n\mu_{(b+1)p^{n+2}})
     & = p\cdot u^{p^{n+1} - 3}v_0^{\nu((b+1)p - 1)} \sigma u\mu_{((b+1)p - 1)p^{n+1}} \\
     & = p\cdot u^{p^{n+1} - 3}\sigma u\mu_{(bp + p - 1)p^{n+1}}
   \end{aligned}
 \end{equation*}
 which yields \eqref{eq:thhkuliftpzero} written with $n\geq 0$ instead of $n \geq 1$.

 If $k+1$ is not divisible by $p$, we have $p^{n+1}\mu_{(k+1)p^{n+1}} = 0$ in $\THH_*(ku;H\Z_{(p)})$ so
 \begin{equation*}
   p\cdot \partial(p^n\mu_{(k+1)p^{n+1}}) = p\cdot u^{p^{n+1} - 3}v_0^{\nu(k)} \sigma u\mu_{kp^{n+1}} = 0
 \end{equation*}
 which is \eqref{eq:thhkuliftpzeronotdiv}.

 In both cases, we have
 \begin{equation*}
   j(p\cdot v_0^h\sigma u\mu_{ap^n}) = p^{h+1}\sigma u \mu_{ap^n} = j(v_0^{h+1}\sigma u \mu_{ap^n})
 \end{equation*}
 that is, $p\cdot v_0^h\sigma u\mu_{ap^n}$ and $v_0^{h+1}\sigma u\mu_{ap^n}$ are represented by the same class in the spectral sequence. By exactness of the diagram, their difference must be a multiple of $u$.
\end{proof}

Having constructed lifts of all the torsion infinite cycles, we can recover the torsion extensions.
\begin{proposition}
  The torsion $ku_*$-sub-module of $\THH_*(ku)$ is presented by the classes
  \begin{equation*}
    v_0^h\sigma u\mu_{ap^n}
  \end{equation*}
  in degree $2ap^n+2$ where $h$, $a$ and $n$ are integers such that $h\geq 0$, $n\geq 1$, $a\geq 1$ and $p$ does not divide $a$, together with the relations:
  \begin{enumerate}
  \item $v_0^{h} \sigma u \mu_{ap^n} = 0$ for any $a \geq 1$ not divisible by $p$, $n \geq 1$ and $h \geq n$,
  \item $u^{p^{n-h}-2}\cdot v_0^h\sigma u \mu_{ap^n} = 0$ for any $a \geq 1$ not divisible by $p$,  $n \geq 1$ and $0 \leq h \leq n - 1$,
  \item $p\cdot \sigma u \mu_{(bp+p-1)p^n} = v_0\sigma u\mu_{(bp+p-1)p^n}+u^{p^{n+1}-p^n}v_0^{\nu(b)}\sigma u \mu_{bp^{n+1}}$ for any $b\geq 1$ and any $n \geq 1$,
  \item  $p\cdot v_0^h\sigma u \mu_{ap^n} = v_0^{h+1}\sigma u \mu_{ap^n}$ for any $a \geq 1$ not divisible by $p$, $n \geq 1$ and $h\geq 1$ or $h=0$ not in case 3.
  \end{enumerate}
\end{proposition}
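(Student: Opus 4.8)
The plan is to compute the torsion part of the $E^\infty$-page of the Bockstein spectral sequence \eqref{ss:u}, lift the surviving classes to $\THH_*(ku)$ using \cref{lem:liftinssu}, verify that the four relations hold in $\THH_*(ku)$, and then show by a degreewise count of orders that no further relations are needed. Write $T$ for the $ku_*$-module presented by the generators and relations (1)--(4), and $\THH_*(ku)^{\mathrm{tors}}$ for the torsion $ku_*$-submodule.

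\emph{The torsion of $E^\infty$.} The torsion classes of $E^1 = \THH_*(ku;H\Z_{(p)})\botimes P(u)$ are the $u$-multiples of the $p^h\mu_{kp}$ and $p^h\sigma u\mu_{kp}$, $k\geq 1$. By \cref{prop:diffinu} the classes $\sigma u\mu_{kp}$ are infinite cycles occurring only as targets of differentials, whereas every $\mu$-class supports a nonzero differential; and inspecting the exact formula of \cref{prop:diffexactes}, $d^{p^{n+1}-2}(p^n\mu_{(k+1)p^{n+1}}) = p^{\nu(k)}u^{p^{n+1}-2}\sigma u\mu_{kp^{n+1}}$, one checks that for $p\nmid a$, $n\geq 1$ and $0\leq h\leq n-1$ the class $u^i p^h\sigma u\mu_{ap^n}$ is a boundary if and only if $i\geq p^{n-h}-2$, hit by a suitable $d^{p^{n-h}-2}$ from a $\mu$-class. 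Hence the torsion of $E^\infty$ is, as a $P(u)$-module, the span of the images of the $p^h\sigma u\mu_{ap^n}$ ($0\leq h\leq n-1$) subject only to $u^{p^{n-h}-2}p^h\sigma u\mu_{ap^n}=0$.

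\emph{Lifting and checking the relations.} \cref{lem:liftinssu} provides lifts $v_0^h\sigma u\mu_{ap^n}\in\THH_*(ku)$ of these infinite cycles for $0\leq h<n$; relation (1) records that $v_0^h\sigma u\mu_{ap^n}=0$ for $h\geq n$ (by convention, or by pushing down relation (2)), and relation (2) is precisely \eqref{eq:thhkuliftuzero}. For relations (3) and (4) I would upgrade the last assertion of \cref{lem:liftinssu} — that $p\cdot v_0^h\sigma u\mu_{ap^n}$ and $v_0^{h+1}\sigma u\mu_{ap^n}$ differ by a $u$-divisible element — to an exact identity: write the difference as $u\cdot z$ with $z$ a torsion class in degree $2ap^n$, multiply by $u^{p^n-3}$, and combine \eqref{eq:thhkuliftpzero} (resp.\ \eqref{eq:thhkuliftpzeronotdiv}) with relation (2) and the injectivity of multiplication by $u^{p^n-2}$ on the relevant $\Z_{(p)}$-span (read off from the differentials of \eqref{ss:u}) to conclude $z = u^{p^{n+1}-p^n}v_0^{\nu(b)}\sigma u\mu_{bp^{n+1}}$ when $a=bp+p-1$ and $z=0$ otherwise. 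Together with Step 1 this gives a surjection $T\twoheadrightarrow\THH_*(ku)^{\mathrm{tors}}$.

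\emph{Completeness.} Every listed generator is genuinely torsion: by \eqref{eq:thhkuliftuzero} it is $u$-power torsion, hence $p$-power torsion by \cref{prop:torsioncoincide}; conversely, by the preceding proposition on the torsion-free part, no torsion-free $E^\infty$-class of \eqref{ss:u} can lift to a torsion element, since its whole $p$-divisibility tower survives to $E^\infty$. Therefore $|\THH_n(ku)^{\mathrm{tors}}|$ equals the product of the orders of the torsion $E^\infty$-classes in total degree $n$ found in Step 1. It then remains to check that $|T_n|$ does not exceed this product in each degree $n$; since a surjection of finite $\Z_{(p)}[u]$-modules with equal orders in every degree is an isomorphism, this completes the proof. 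This last count — verifying that relations (3)--(4) splice the $v_0$-towers across the $u$-towers cut out by relations (1)--(2) with no additional collapse — is the main obstacle; it is the exact analogue, transported along $v_1 = u^{p-1}$ and $\sigma v_1 \mapsto u^{p-2}\sigma u$ (\cref{prop:imagesigmav1}), of the bookkeeping carried out for $\THH_*(\ell)$ in \cite{angeltveit2010topological}.
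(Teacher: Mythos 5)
Your overall architecture is the same as the paper's: lift the torsion classes of the $E^\infty$-page of \eqref{ss:u} via \cref{lem:liftinssu}, obtain relations (1)--(2) from that lemma, and pin down (3)--(4) by controlling the $u$-divisible ambiguity in $p\cdot v_0^h\sigma u\mu_{ap^n} - v_0^{h+1}\sigma u\mu_{ap^n} = u\alpha$. The genuine gap is in how you dispose of that ambiguity. You multiply by $u^{p^n-3}$ and invoke ``the injectivity of multiplication by $u^{p^n-2}$ on the relevant $\Z_{(p)}$-span (read off from the differentials of \eqref{ss:u})''. That is not something one can read off: in a given degree plenty of torsion classes are killed by low powers of $u$, and what you actually need is that no torsion generator $v_0^k\sigma u\mu_{cp^m}$ has a $u$-tower passing through degree $2ap^n$ with $u\alpha\neq 0$ while dying within the next $p^{n-h}-2$ steps (note also that for $h\geq 1$ the relevant power is $u^{p^{n-h}-2}$, not $u^{p^n-2}$, and \eqref{eq:thhkuliftpzeronotdiv} is vacuous there). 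This nonexistence statement is exactly the degree condition \eqref{eq:forsTinTHHku} in the paper, and its verification --- the case analysis $m\leq n$ versus $m>n$, dividing by $p^m$ or $p^n$ and using that $a-cp^{m-n}$ is an integer so no integer lies in the resulting open interval --- is the core of the paper's proof of this proposition. Your proposal asserts the needed fact instead of proving it, so the hardest step is missing.

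The second gap is your completeness step: you reduce the absence of further relations to a degreewise comparison of orders between the presented module $T$ and the torsion of $E^\infty$, and then explicitly defer that count (``the main obstacle'') to an unexecuted analogue of the bookkeeping in \cite{angeltveit2010topological}. The paper does not need a separate count: since the lifts of \cref{lem:liftinssu} realize every torsion class of $E^\infty$, relations (1)--(2) fix the associated graded, and the $\alpha=0$ argument shows that the listed extensions are the only ones degreewise possible and that they do occur (being forced by the known relations at the tops of the $u$-towers), the torsion module is already determined. So your Step 1 (identification of the $E^\infty$-torsion and of the boundaries $u^ip^h\sigma u\mu_{ap^n}$ for $i\geq p^{n-h}-2$, via \cref{prop:diffexactes}) is correct and matches the paper, but the two steps that carry the weight of the argument are left as assertions, and the first is mischaracterized as routine.
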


\begin{proof}
  The lifts of Lemma~\ref{lem:liftinssu} have the claimed properties. Claim 1 follows from the order with respect to multiplication by $p$ in the $E^\infty$-page of the spectral sequence. Claim 2 is proven in the lemma. We need to check 3 and 4. For any integer $m$ and any $h \geq 0$, the lemma states that in $\THH_*(ku)$, $p\cdot v_0^h\sigma u\mu_m$ must be equal to $v_0^{h+1}\sigma u\mu_m + u\alpha$ for some $\alpha\in\THH_{*-2}(ku)$. 
  
  We consider claim 3. Let $b\geq 1$ and $n\geq 1$. Since
  \begin{equation*}
    p\cdot u^{p^n-3}\cdot \sigma u\mu_{(bp + p -1)p^n} = u^{p^{n+1} - 3} v_0^{\nu(b)}\sigma u\mu_{bp^{n+1}}
  \end{equation*}
  we must have
  \begin{equation*}
    p\cdot \sigma u\mu_{(bp + p -1)p^n} = v_0\sigma u\mu_{(bp + p -1)p^n} + u^{p^{n+1} - p^n} v_0^{\nu(b)}\sigma u\mu_{bp^{n+1}} + u\alpha
  \end{equation*}
  with $\alpha$ such that $u^{p^n-2} \alpha = 0$.

  We now consider claim 4. Let $a \geq 1$ not divisible by $p$, $h\geq 0$ and $n\geq 1$. As before, we must have
  \begin{equation*}
    p\cdot v_0^h\sigma u\mu_{ap^n} = v_0^{h+1}\sigma u \mu_{ap^n} + u\alpha
  \end{equation*}
  with $u^{p^{n-h}-2}\alpha = 0$.

  We will prove that in both cases, $\alpha$ must be zero. Since Lemma~\ref{lem:liftinssu} lifts all the torsion classes of the spectral sequence, $\alpha$ must be written with the elements we have lifted. Let $a \geq 1$ not divisible by $p$, $h\geq 0$ and $n\geq 1$ be fixed (for claim 3, we can set $a = bp + p -1$ and $h =0$). To write $\alpha$, we need to find $c\geq 1$ not divisible by $p$, $m\geq 1$ and $k\geq 0$ such that
  \begin{equation}\label{eq:forsTinTHHku}
    |v_0^k\sigma u\mu_{cp^m}| < |v_0^h\sigma u\mu_{ap^n}| < |u^{p^{m-k}-2}v_0^k\sigma u\mu_{cp^m}| \leq |u^{p^{n-h}-2}v_0^h\sigma u\mu_{ap^n}|
  \end{equation}
  in order for the $u$-tower above $\alpha$ to end before the $u$-tower above $v_0^h\sigma u \mu_{ap^n}$.

  Computing the degree, we get
  \begin{equation*}
    \begin{gathered}
      2cp^m + 2 < 2ap^n + 2 < 2cp^m + 2 + 2(p^{m - k} - 2) < 2ap^n + 2 + 2(p^{n-h} - 2) \\
      \iff 0 < ap^n - cp^m < p^{m - k} - 2 < ap^n - cp^m + p^{n - h} - 2.
    \end{gathered}
  \end{equation*}

  If $m \leq n$, dividing by $p^m$, we get
  \begin{equation*}
    0 < ap^{n - m} - c < p^{-k} - 2p^{-m} < ap^{n-m}-c +p^{n-m-h} - 2p^{-m}
  \end{equation*}
  which is impossible since $ap^{n-m} -c$ is an integer.

  If $m > n$, dividing by $p^n$, we get
  \begin{equation*}
    \begin{gathered}
      0 < a - cp^{m -n} < p^{m - n - k} - 2p^{-n} < a - cp^{m - n} + p^{-h} - 2p^{-n} \\
      \iff 2p^{-n} < a - cp^{m -n} + 2p^{-n}< p^{m - n - k} < a - cp^{m - n} + p^{-h}.
    \end{gathered}
  \end{equation*}
  so that $p^{m - n -k} \geq 1$ must be an integer since $a - cp^{m - n}$ is also an integer. But there can be no integer in the open interval $(a - cp^{m-n} + 2p^{-n}, a - cp^{m-n} + p^{-h})$.

  Therefore, $\alpha$ must be zero. For degree reasons, the only possible extensions are the ones that we claim. As we know the relations at the top of the $u$-towers, we know these extensions must happen.
\end{proof}

From the two previous results, we can give a presentation of $\THH_*(ku)$ as a $ku_*$-module.
\begin{theorem} \label{prop:thhku}
  $\THH_*(ku)$ is a quotient of the $\Z_{(p)}[u]$-module
  \begin{multline}
   \Z_{(p)}[u]\{1,\,\sigma u,\, v_0^n\mu_{p^{n+1}},\,n\geq 0\} \\
    \oplus \Z_{(p)}[u]\{v_0^h\sigma u\mu_{ap^n},\, n\geq 1,\, a\geq 1,\, \mbox{$a$ not divisible by $p$},\, h\geq 0\}
  \end{multline}
  by the relations in the non-torsion part:
  \begin{itemize}
  \item $p\cdot \mu_p = u^{p-2}\sigma u$,
  \item $p\cdot v_0^n\mu_{p^{n+1}} = u^{p^{n+1}-p^n}v_0^{n-1}\mu_{p^n}$ for any $n\geq 1$,
  \end{itemize}
  and the relations in the torsion part:
  \begin{itemize}
  \item $v_0^{h} \sigma u \mu_{ap^n} = 0$ for any $a\geq 1$ not divisible by $p$, $n \geq 1$, and $h \geq n$,
  \item $u^{p^{n-h}-2}\cdot v_0^h\sigma u \mu_{ap^n} = 0$ for any $a \geq 1$ not divisible by $p$,  $n\geq 1$, and $0 \leq h \leq n - 1$,
  \item $p\cdot \sigma u \mu_{(bp+p-1)p^n} = v_0\sigma u\mu_{(bp+p-1)p^n}+u^{p^{n+1}-p^n}v_0^{\nu(b)}\sigma u \mu_{bp^{n+1}}$ for any $b\geq 1$ and any $n\geq 1$,
  \item $p\cdot v_0^h\sigma u \mu_{ap^n} = v_0^{h+1}\sigma u \mu_{ap^n}$ for any $a\geq 1$ not divisible by $p$, $n\geq 1$ and $h\geq 1$ or $h=0$ not in the previous case.
  \end{itemize}
  The degrees are:
  \begin{equation*}
    \begin{aligned}
      & |\mu_{kp}| = 2kp - 1 \\
      & |\sigma u| = 3 \\
      & |v_0| = 0 \\
      & |u| = 2
    \end{aligned}
  \end{equation*}
  and $\nu$ is the $p$-adic valuation.
\end{theorem}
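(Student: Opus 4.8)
The plan is to obtain this theorem by synthesising the two preceding propositions — the one presenting the torsion-free quotient $\THH_*(ku)/T$, where $T$ denotes the torsion $ku_*$-submodule, and the one presenting $T$ itself — along the short exact sequence
\[
0 \longrightarrow T \longrightarrow \THH_*(ku) \longrightarrow \THH_*(ku)/T \longrightarrow 0 .
\]
The observation that makes this gluing clean is that the non-torsion relations $p\cdot\mu_p = u^{p-2}\sigma u$ and $p\cdot v_0^n\mu_{p^{n+1}} = u^{p^{n+1}-p^n}v_0^{n-1}\mu_{p^n}$ hold not merely modulo $T$ but exactly in $\THH_*(ku)$; this is precisely what the proof of the torsion-free proposition established, using \cref{prop:torsioncoincide} to exclude any $p$-divisible — hence $u$-torsion — correction term, so that no element of $T$ intervenes.

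Concretely I would argue as follows. First, by strong convergence of the $u$-Bockstein spectral sequence \eqref{ss:u} together with the description of its $E^\infty$-page read off from \cref{prop:diffinu} (or \cref{prop:diffexactes}), the classes $1$, $\sigma u$, the $v_0^n\mu_{p^{n+1}}$ for $n\geq 0$, and the $v_0^h\sigma u\mu_{ap^n}$ for $a$ prime to $p$, $n\geq 1$, $0\leq h\leq n-1$, generate $\THH_*(ku)$ as a $\Z_{(p)}[u]$-module; for the last two families I take the explicit lifts furnished by the torsion-free proposition and by \cref{lem:liftinssu}. This yields the surjection from the stated free $\Z_{(p)}[u]$-module onto $\THH_*(ku)$. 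Second, all the listed relations hold: the two in the non-torsion part by the torsion-free proposition, the four in the torsion part by the proposition presenting $T$ (which rests on the properties of the lifts in \cref{lem:liftinssu}); the degrees follow from $|\mu_{kp}| = 2kp-1$, $|\sigma u| = 3$ and $|u| = 2$.

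It remains to check that these relations form a complete set, i.e.\ that the resulting surjection $Q \twoheadrightarrow \THH_*(ku)$ from the presented module is an isomorphism. I would do this degreewise by comparing $Q$ with $E^\infty$ of \eqref{ss:u}: filtering $Q$ by powers of $u$, the relations have been arranged so that $\mathrm{gr}\,Q$ in each degree is spanned by the monomials $u^j\cdot(\text{generator})$ and $u^j v_0^i\cdot(\text{generator})$, with the $u$-towers on $\mu_p$ and on each $v_0^n\mu_{p^{n+1}}$ truncated exactly where the relations force, and with $u^{p^{n-h}-2}$, resp.\ $v_0^n$, killing $v_0^h\sigma u\mu_{ap^n}$; a direct count matches this with the surviving classes of \eqref{ss:u} computed via \cref{prop:diffinu}. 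Since $E^\infty$ is the associated graded of $\THH_*(ku)$ for the Bockstein filtration and the surjection respects it, equality of sizes in each degree forces it to be an isomorphism.

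I expect the delicate part to be this last bookkeeping — reconciling the $u$-adic filtration on $Q$ (and the $p$-power filtration intertwined with it) with the single Bockstein filtration on $\THH_*(ku)$, and keeping track of which $u$-towers are cut off at which height — although much of that work has in fact already been carried out inside the two preceding propositions, whose proofs contain the corresponding degree computations. Everything else in the argument is formal.
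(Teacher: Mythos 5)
Your proposal is correct and follows essentially the same route as the paper, which states this theorem as a direct synthesis (``from the two previous results'') of the presentation of the torsion-free quotient and the presentation of the torsion $ku_*$-submodule built from \cref{lem:liftinssu}. Your added observations --- that the non-torsion relations hold exactly (no torsion correction, torsion being concentrated in even degrees while these relations sit in odd degrees) and the degreewise comparison with the $E^\infty$-page of \eqref{ss:u} to confirm completeness of the relations --- simply make explicit what the paper leaves implicit.
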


As studied in \cite{angeltveit2010topological} for $\THH_*(\ell)$, the torsion modules of $\THH_*(ku)$ are divided into periodic submodules $T_n$ for $n\geq 1$. Each $T_n$ corresponds to the submodules of the torsion elements of degrees between $|\sigma u \mu_{p^n}|= 2p^n+2$ and $|\sigma u\mu_{2p^n}| - 1 = 2(2p^n) +1$. Each of these appears $p-1$ times, by replacing the leftmost class with $\sigma u\mu_{kp^n}$ for $1 \leq k \leq p - 1$, and $p$ copies (as submodules or quotients) of $T_n$ are present in $T_{n+1}$, so $T_n$ appears an infinite number of times. In the following figures, the generators are named and placed on the bottom horizontal line; the rest of the non-zero classes are indicated by a $\circ$ when they come from $\THH_*(\ell)$, a $\bullet$ otherwise; going straight up indicates a multiplication by $p$, and going upward and right is a multiplication by $u$; when two lines go up from a single class, it means the multiplication by $p$ is the sum of the two elements reached. None of the named classes comes from $\THH_*(\ell)$.

\input{fig1}

We can see that
\begin{equation*}
  \THH_*(ku) \neq ku_* \otimes_{\ell_*} \THH_*(\ell).
\end{equation*}
Since $ku$ is free and finitely generated over $\ell$, $ku$ is flat over $\ell$, so that $\THH(ku)$ and $ku \wedge_\ell \THH(\ell)$ are not weakly equivalent.

The extension of scalars, however, does yield an injection, and in fact a short exact sequence
\begin{equation*}
  \begin{tikzcd}
    0 \rar & ku_*\otimes_{\ell_*} \THH_*(\ell) \rar & \THH_*(ku) \rar & \mathcal{C} \rar & 0
  \end{tikzcd}
\end{equation*}
where the cokernel $\mathcal{C}$ can be presented as the quotient of the $\Z_{(p)}[u]$-module
\begin{equation*}
  P_{p-2}(u)\otimes \Z_{(p)}\{1,\,\sigma u,\,\sigma u\mu_{ap^n},\, n\geq 1,\, a\geq 1,\, \mbox{$a$ not divisible by $p$}\}
\end{equation*}
by the relation $p^{n} \sigma u \mu_{ap^n} = 0$ for any $a$ and $n$, $a$ not divisible by $p$.

\appendix
\section{Appendix: table of the spectral sequences used}
\centerline{
  \begin{tabular}{|c|c|c|c|}\hline
    Name & Type & First page & Target \\\hline
    \multirow{2}*{\eqref{ss:lZ}} &  \multirow{2}*{Brun} & $E^2_{n,m} = \THH_n(H\Z_{(p)}; H(H\Z_{(p)}\wedge_\ell H\Z_{(p)})_m)$ &   \multirow{2}*{$\THH_*(\ell;H\Z_{(p)})$} \\
         & & $\cong \THH_n(H\Z_{(p)})\botimes E(\sigma v_1)_m$ & \\ \hline
    \eqref{ss:l} & Bockstein &  $E^1_{n,m} = \THH_n(\ell;H\Z_{(p)})\botimes P(v_1)_m$ &  $\THH_*(\ell)$ \\ \hline
    \multirow{2}*{\eqref{ss:uT}} & Truncated & $E^1_{n,m} = \THH_n(ku;H\Z_{(p)})\botimes P_{p-1}(u)_m$ &   \multirow{2}*{$\THH_*(ku;ku/v_1)$} \\
         & Bockstein & $\cong (\THH_*(H\Z_{(p)})\otimes E(\sigma u))_n \botimes P_{p-1}(u)_m$ & \\ \hline
    \multirow{3}*{\eqref{ss:uTB}} &  \multirow{3}*{Brun} & $E^2_{n,m} = \THH_n(ku/v_1;H(ku/v_1\wedge_{ku}ku/v_1)_m)$ &    \multirow{3}*{$\THH_*(ku;ku/v_1)$} \\
         & & $\cong (\THH_*(H\Z_{(p)}) \otimes E(\sigma u)\otimes \Gamma(\varphi u))_n$ & \\
         & & $\botimes (E(\sigma v_1) \otimes P_{p-1}(u))_m$ & \\ \hline
    \eqref{ss:v1} & Bockstein & $E^1_{n,m} = \THH_n(ku;ku/v_1)\botimes P(v_1)_m$ & $\THH_*(ku)$ \\ \hline
    \eqref{ss:u} & Bockstein & $E^1_{n,m} = \THH_n(ku;H\Z_{(p)})\botimes P(u)_m$ & $\THH_*(ku)$ \\ \hline
  \end{tabular}}
\begin{equation*}
  \begin{aligned}
    & |\mu_{kp}| = (2kp-1,0) \mbox{, $k\geq 1$ the generators of $\THH_*(H\Z_{(p)})$} \\
    & |\sigma v_1| = (0,2p-1)\\
    & |v_1| = (0,2(p-1)) \\
    & |\sigma u| = (3,0) \\
    & |\lambda_1| = (2p-1,0) \\
    & |\mu_1| = (2p,0) \\
    & |u| = (0,2) \\
    & |\varphi u| = (2p,0)
  \end{aligned}
\end{equation*}
On the left side of the $\botimes$, the generators have bidegrees lying on the horizontal axis; on the right, on the vertical axis.

\bibliographystyle{plain}
\bibliography{bib}

\end{document}